\def\Dbar{\leavevmode\lower.6ex\hbox to 0pt{\hskip-.03ex\accent"16\hss}D}
\providecommand{\keywords}[1]{\textbf{\textit{Key words---}} #1} 
\newtheorem{theorem}{{\bf Theorem}}[section]
\theoremstyle{plain} 
\newtheorem{lemma}[theorem]{Lemma} 
\newtheorem{proposition}[theorem]{Proposition}
\newtheorem{remark}[theorem]{Remark}
\newtheorem{definition}[theorem]{Definition}
\newcommand{\R}{\mathbb R}
\renewcommand{\epsilon}{\varepsilon}
\newcommand{\eps}{\epsilon}
\newcommand{\ue}{u^{\eps}}
\newcommand{\de}{d^{\eps}}
\newcommand{\MR}{\mathsf{MR}}
\newcommand{\bra}[1]{\left(#1\right)}
\newcommand{\sbra}[1]{\left[#1\right]}
\newcommand{\na}{\nabla}
\newcommand{\LO}[1]{L^{#1}(\Omega)}
\newcommand{\LQ}[1]{L^{#1}(Q_T)}
\newcommand{\pa}{\partial}
\newcommand{\intQT}{\iint_{Q_T}}
\renewcommand{\H}{\mathbf{H}}
\newcommand{\wt}[1]{\widetilde{#1}}
\newcommand{\M}{\mathcal M}
\title{Rigorous derivation of Michaelis-Menten kinetics \\ in the presence of diffusion}
\author[$\dagger$]{Bao Quoc Tang}
\author[$\dagger$,$\ddagger$]{Bao-Ngoc Tran}
\affil[$\dagger$]{\small{\textit{Institute of Mathematics and Scientific Computing, University of Graz, 
	Heinrichstrasse 36, 8010 Graz, Austria}}}
\affil[$\ddagger$]{\small{\textit{Department of Mathematics, Faculty of Science, Nong Lam University, Ho Chi Minh City, Vietnam}}}
\affil[ ]{\small{\textit{quoc.tang@uni-graz.at, bao-ngoc.tran@uni-graz.at\thanks{Corresponding author.}}}}
\date{}
\begin{document}

\maketitle

\begin{abstract} 
	Reactions with enzymes are critical in biochemistry, where the enzymes act as catalysis in the process. One of the most used mechanisms for modeling enzyme-catalyzed reactions is the Michaelis-Menten (MM) kinetic. In the ODE level, i.e. concentrations are only on time-dependent, this kinetic can be rigorously derived from mass action law using quasi-steady-state approximation. This issue in the PDE setting, for instance when molecular diffusion is taken into account, is considerably more challenging and only formal derivations have been established. In this paper, we prove this derivation rigorously and obtain MM kinetic in the presence of spatial diffusion. In particular, we show that, in general, the reduced problem is a cross-diffusion-reaction system. Our proof is based on improved duality method, heat regularisation and a suitable modified energy function. To the best of our knowledge, this work provides the first rigorous derivation of MM kinetic from mass action kinetic in the PDE setting.
\end{abstract}  
\keywords{Enzyme reactions; Michaelis-Menten kinetics; Improved duality method; Modified energy method; Cross-diffusion}

\tableofcontents

\section{Introduction} 
\subsection{Problem setting}
Enzyme-catalyzed reactions are critical in biochemistry, where the reaction rates can be   accelerated by well over a million-fold comparing with non catalyzed reactions, 
and  enzymes are eventually not getting either consumed or   transformed  into another substance. These reactions are typified by combination of an enzyme $E$ with its substrate molecule $S$ to form an enzyme-substrate complex $C$, which is a needed step in enzyme mechanism and the key to study kinetic behaviors, see Henri \cite{henri1903lois,henri2006theorie}. Diagrammatically, enzyme reaction mechanism reads    
\begin{align}
E + S \xrightleftharpoons[l_1]{k_1}   C  \xrightleftharpoons[l_2]{k_2} E + P, \tag{\textasteriskcentered} \label{star}
\end{align}
where the substrate $S$ binds the enzyme-catalyst  $E$ to form the complex $C$ that can be transformed into the enzyme and a product $P$. The numbers   $k_1,l_1,k_2\in(0,\infty)$ and $l_2\in [0,\infty)$  are reaction rate constants. If $l_2=0$, then (\ref{star}) is called irreversible, i.e., the enzyme and product cannot react to synthesize the complex. In reversible case,    $l_2>0$,  there is a  backward reaction that forms   the complex from the enzyme and product. Assume for a moment that the reaction is irreversible, i.e. $l_2 = 0$, and the concentrations of enzyme ($e$), substrate ($s$), complex ($c$), and product ($p$) depend only on time. By applying the mass action law, one gets the differential system with mass action kinetic
\begin{equation}\label{ODEsys}
\begin{cases}
	s' = -k_1se + l_1c,\\
	e' = -k_1se + (l_1+k_2)c,\\
	c' = k_1se - (l_1+k_2)c,\\
	p' = k_2c
\end{cases}
\end{equation}
with initial data $z(0) = z_0, z\in \{s,e,c,p\}$. From the conservation laws $(e+c)' = 0$ and $(s+c+p)' = 0$, it leads to the reduced system
\begin{equation*}
	\begin{cases}
		s' = -k_1s(e_0+c_0 - c) + l_1 c\\
		c' = k_1s(e_0 + c_0 - c) - (l_1+k_2)c.
	\end{cases}
\end{equation*}
  The quasi-steady-state-approximation hyphothesis assumes that the complex concentration $c$ reaches its equilibrium almost instantly, that is we have $0 \approx c' =  k_1s(e_0+c_0-c) - (l_1+k_2)c$, which leads to the evolution equation of the substrate with the famous \textit{Michaelis-Menten kinetic} (or briefly, MM kinetic)
\begin{equation}\label{ee}
	s' = -\frac{k_1k_2(e_0+c_0)s}{k_1s + l_1 + k_2}.
\end{equation}
This type of reaction rate has become one of the most, if not the most, used kinetics in enzymatic, or more generally catalytic  reactions in the literature. The aforementioned derivation of MM kinetic from mass action kinetic can be rigorously justified (see e.g. \cite{perthame2015parabolic,segel1989quasi}) under different conditions, in which the assumption that the initial enzyme concentration is sufficiently small, i.e. $e_0/s_0 \ll 1$, is of particular importance since it is biologically relevant. 

\medskip
In many contexts, e.g. experiments in vivo, chemical concentrations are spatially inhomogeneous as diffusion is hindered in the gel-like cytosol, and the cytosolic composition varies in different regions of the cell. In such scenarios, concentrations of chemicals are functions of both temporal and spatial variables, and it is natural to consider reaction-diffusion systems. Assume that the enzyme-catalysed reaction (\ref{star}) occurs in a bounded vessel $\Omega\subset \mathbb{R}^N$, $N\ge 1$, with smooth boundary $\partial\Omega$,  and the  diffusion process follows the Fick's second law. Let $\delta_S,\delta_E,\delta_C,\delta_P>0$ be the molecular diffusion rates of $S,E,C,P$, 
and $  n_S,n_E,n_C,n_P $ be the concentrations  of $S,E,C,P$, which are  functions of $(x,\tau)\in \Omega \times [0,\infty)$. Thanks to the law of mass action, the governing equations for (\ref{star}) are\footnote{The reaction-diffusion system \eqref{MainEquation0} can be derived from reactive Boltzmann system describing the enzyme reaction \eqref{star}, see \cite{bisi2006reactive}.}  
\begin{align}
\left\{ \begin{array}{rcrcrcrcrcr}
\displaystyle \partial _{\tau} n_S  &-& \delta_S \Delta n_S &=&  - \, k_1 n_Sn_E &+& l_1n_C  &  & & \text{in } Q_T,\\
\displaystyle \partial _{\tau} n_E  &-& \delta_E \Delta n_E &=& - \, k_1 n_S n_E &+&  (l_1+k_2)n_C &-& l_2n_En_P    & \text{in } Q_T,\\
\displaystyle \partial _{\tau} n_C   &-& \delta_C \Delta n_C &=&   + \,  k_1 n_Sn_E &-&  (l_1+k_2)n_C &+& l_2n_En_P    & \text{in } Q_T,\\
\displaystyle \partial _{\tau} n_P  &-&  \delta_P \Delta n_P &=&   &+& k_2n_C &-& l_2n_En_P      & \text{in } Q_T. 
\end{array}
\right. \label{MainEquation0}
\end{align}
where     $Q_T:=\Omega \times (0,T)$ for $T\in (0,\infty]$. Denote by  $\nu=\nu(x)$ the unit outer normal vector at point $x\in \partial\Omega$. This system is subjected to the homogeneous Neumann boundary condition 
\begin{align}
 \pa_{\nu}n_S =  \pa_{\nu}n_E =  \pa_{\nu}n_C =  \pa_{\nu}n_P = 0  \quad \text{on }   \pa\Omega\times(0,T),  \label{BoundaryCondition0}
\end{align}
where $\pa_{\nu}f = \na f \cdot \nu$ is the directional derivative. The initial states are given by  
\begin{align}
\big( n_S( x,0),n_E( x,0),n_C( x,0),n_P( x,0)\big) = \big( n_{SI}(x), n_{EI}(x) , n_{CI}(x), n_{PI}(x)\big), \quad x\in \Omega. \label{InitialCondition0}
\end{align}
Naturally, one would expect that the MM kinetic is also a suitable reaction rate for the substrate concentration, and in fact this has been used extensively in the literature. Unlike the case of the differential system \eqref{ODEsys}, the derivation of MM kinetic from mass action law through the reaction-diffusion system \eqref{MainEquation0} is more challenging. For instance, it is obvious that the conservation laws are not any more pointwise (unless both enzyme $E$ and complex $C$ are not diffusing) and a direct reduction using conservation laws is not possible. One can  use asymptotic analysis to at least formally derive the MM kinetic from \eqref{MainEquation0}. More precisely, we consider the following small parameter
\begin{equation}\label{a1}
	\eps:= \frac{\int_{\Omega}n_{EI}(x)dx}{\int_{\Omega}n_{SI}(x)dx} \ll 1
\end{equation}
which mimics the basic  quasi-steady state or pseudo-steady state hypothesis that the initial  concentration of enzyme is much less than the initial substrate  concentration \cite{segel1989quasi,goeke2015determining}. It is remarked that this is considerably more general than imposing
\begin{align*}
\frac{n_{EI}(x)}{n_{SI}(x)} \ll 1 \quad \text{or} \quad \frac{n_{EI}(x)+n_{CI}(x)}{n_{SI}(x)} \ll 1  \quad \text{ for all } x\in \Omega   
\end{align*}
since \eqref{a1} allows $n_{SI}$ to be zero on some non-zero measured set or $n_{EI}$ to have spikes. 
Following the rescaling in \cite[Section 3]{FraLaxWalWit18},
\begin{align}
\tau =\frac{t}{\epsilon}, \quad d_z  = \frac{\delta_z}{\epsilon},   \quad   \widetilde n_{E}(x,t)=\frac{n_E(x,\tau)}{\epsilon}, \quad   \widetilde n_C(x,t)=\frac{n_C(x,\tau)}{\epsilon},\\
\widetilde{n}_S(x,t) = n_S(x,\tau), \quad \widetilde{n}_P(x,t) = n_P(x,\tau),\quad z\in \{S,E,C,P\} \label{MainScaling}
\end{align}
and denoting  $\ue = (\ue_j)_{1\le j\le 4}$ where
$$\ue_1 =  \widetilde{n}_S, \; \ue_2 = \widetilde  n_E,\; \ue_3 = \widetilde  n_C,\; \ue_4 =  \widetilde{n}_P, \quad \text{ and } \quad  (d_j^{\epsilon})_{1\le j\le 4}=(d_S,d_E,d_C,d_P),$$  
we obtain from (\ref{MainEquation0})-(\ref{InitialCondition0}) the following $\eps$-dependent reaction-diffusion system
\begin{align}
 \left\{ \begin{array}{rllllll}
\partial_t u_1^\epsilon -  d_1^{\epsilon} \Delta u_1^\epsilon &=&  -\, k_1 u_1^\epsilon u_2^\epsilon   +   l_{1} u_3^\epsilon  & \text{in } Q_T, \vspace*{0.15cm}\\
\partial_t u_2^\epsilon  -  d_2^{\epsilon} \Delta u_2^\epsilon &=& - \, \dfrac{1}{\epsilon} \Big( k_1 u_1^\epsilon u_2^\epsilon  -    (  k_{2} + l_{1} ) u_3^\epsilon  +    l_2   u_2^\epsilon u_4^\epsilon  \Big)  & \text{in } Q_T, \vspace*{0.15cm} \\
\partial_t u_3^\epsilon   -  d_3^{\epsilon} \Delta u_3^\epsilon  &=& + \, \dfrac{1}{\epsilon} \Big( k_1 u_1^\epsilon  u_2^\epsilon   -    (  k_{2} + l_{1} ) u_3^\epsilon   +    l_2  u_2^\epsilon u_4^\epsilon  \Big)  & \text{in } Q_T, \vspace*{0.15cm} \\
\partial_t u_4^\epsilon  - d_4^{\epsilon} \Delta u_4^\epsilon  &=&    - \, l_2 u_2^\epsilon  u_4^\epsilon  +  k_{2} u_3^\epsilon   & \text{in } Q_T, \vspace*{0.15cm} \\
 \partial_\nu u^\epsilon   &=&   0   & \text{on } \pa\Omega\times(0,T), \vspace*{0.15cm} \\
u^\epsilon (0) &=&    u_{0}^\epsilon    & \text{in } \Omega ,
\end{array} 
\right.  \label{MainProblem}
\end{align}
where $\partial_\nu u^\epsilon  $ shortly stands for $(\partial_\nu u_j^\epsilon  )_{1\le j\le 4}$.  
Through out this paper, without loss of generality, we assume that the initial data $u_0^\eps$ does not depend on $\epsilon$, and consequently we will remove the superscript. 
Moreover, we consider the case of \textit{slow diffusion} $\delta_z = O(\eps)$, $z\in \{S,E,C,P\}$. This is achieved by assuming that the diffusion rates $d_j^\epsilon$ are convergent, i.e.,  
\begin{align}
\lim\limits_{\epsilon\to0^+}   d_j^\epsilon   =   d_j  \in (0,\infty)^4, \, j=1,\dots,4, \label{AssumpOnDj} 
\end{align}
which we impose  {\it throughout this paper}. From the second and third equations of \eqref{MainProblem}, we expect, at least formally, that when $\eps \to 0$, 
\begin{equation*}
	k_1\ue_1\ue_2 - (k_2+l_1)\ue_3 + l_2\ue_2\ue_4 \to 0 = k_1u_1u_2 - (k_2+l_1)u_3 + l_2u_2u_4,
\end{equation*}
where it is implicitly assumed that $\ue_j \to u_j$, $j\in \{1,\ldots,4\}$. It follows that
\begin{equation}\label{CriticalManifold}
	u_3 = \frac{k_1u_1+l_2u_4}{k_1u_1+l_2u_4+k_2+l_1}(u_2+u_3).
\end{equation}
By summing the second and third equations of \eqref{MainProblem} and let $\eps\to 0$, we formally have
\begin{equation*}
\begin{aligned}
	\pa_t(u_2 + u_3) - d_2\Delta(u_2+u_3) &= (d_3-d_2)\Delta u_3\\
	&= (d_3-d_2)\Delta\left[ \frac{k_1u_1+l_2u_4}{k_1u_1+l_2u_4+k_2+l_1}(u_2+u_3)\right].
\end{aligned}
\end{equation*}
Denote by $v = u_2 + u_3$, the system \eqref{MainProblem} in the limit $\eps\to 0$ is formally reduced to the following \textit{cross-diffusion-reaction system}
\begin{equation}\label{ReducedSys}
\left\{
	\begin{aligned}
		&\pa_t u_1 - d_1\Delta u_1  = -\frac{(k_1k_2u_1 - l_1l_2u_4)v}{k_1u_1+l_2u_4 + k_2 + l_1},\\
		&\pa_t v - d_2\Delta v = (d_3-d_2)\Delta\left(\frac{(k_1u_1+l_2u_4)v}{k_1u_1+l_2u_4+k_2+l_1} \right),\\
		&\pa_t u_4 - d_4\Delta u_4 = +\frac{(k_1k_2u_1 - l_1l_2u_4)v}{k_1u_1 + l_2u_4 + k_2 + l_1},
	\end{aligned}
\right.
\end{equation}
subjected to homogeneous Neumann boundary conditions, and initial data $u_1(0) = u_{10}$, $v(0) = u_{20}+u_{30}$, and $u_4(0) = u_{40}$. Consider the case of irreversible enzyme reaction in \eqref{star}, i.e. $l_2 =0$, and $d_2 = d_3$, we see that $v$ can be solved independently from $\pa_t v - d_2\Delta v = 0$, and therefore can be considered known for the equation of the substrate $u_1$, which now reads
\begin{equation*}
	\pa_t u_1 - d_1\Delta u_1 = \frac{-k_1k_2u_1v}{k_1u_1 + k_2 + l_1} . 
\end{equation*}
This is precisely the well-known \textit{MM kinetic} which has been used frequently in the literature. System \eqref{ReducedSys} indicates that, in general, one should take into account the evolution of $v$ featuring cross-diffusion.  This formal derivation and related versions were given in e.g. \cite{FraLaxWalWit18,kalachev2007reduction}, but up the best of our knowledge, no rigorous proof has been investigated. 
Our present paper shows, therefore, for the first time, that the reduction from \eqref{MainProblem} to \eqref{ReducedSys} is rigorous rather than just a formal derivation. 

\subsection{State of the art}
The rigorous derivation of MM kinetic in the case homogeneous setting, i.e. for the differential system \eqref{ODEsys}, has been extensively investigated in the literature. Different conditions are proposed to validate the MM kinetic, for instance,  when small initial enzyme concentrations are, or when fast reaction rate constants are large, see e.g. \cite{segel1989quasi}. It is also worth noting that this research direction belongs to a developed theory of fast-slow systems or multi-time scale dynamics (see e.g. \cite{kuehn2015multiple}).

\medskip
The singular limit as $\eps\to 0$ of \eqref{MainProblem} falls into the problem of fast reaction limits for PDE, which has caught a lot of attention in the past decades. Studies in this direction go back to the  works of Evans \cite{evans1980convergence} and Martin \cite{martin1980mathematical} in the eighties where the former showed the convergence to a nonlinear diffusion problem while the latter proved the convergence to a Stefan free boundary type problem. Already these
two works suggested interesting mathematical structures as well as complexity of fast reaction limits of PDE. Indeed, extensive studies on the subject showed that fast reaction limit leads to many different and interesting
limiting systems, ranging from nonlinear diffusion equations \cite{bothe2003reaction,evans1980convergence}, Stefan free boundary problems \cite{bothe2012instantaneous,murakawa2011fast}, cross-diffusion systems \cite{bothe2012cross,desvillettes2015new,daus2020cross,brocchieri2021evolution}, to a new derivation of the classical dynamical boundary conditions
\cite{henneke2016fast}, various behaviour of moving interfaces \cite{iida2017vanishing}, fractional kinetics \cite{conforto2018reaction,desvillettes2019non}, or systems involving Young measures \cite{perthame2022fast}.  On the one hand, this variety of limiting dynamics
shows the close connection of fast reaction limits of (bio-)chemical models to other phenomena of
dynamical systems depending on different scales and situations. For instance, the classical dynamical
boundary condition for parabolic equations, which has its root in modelling heat conduction in solids
(when a solid is in contact with a well-stirred fluid at its surface),   can be rigorously derived as
a limit of a volume-surface reaction-diffusion system in which the reaction rate between the volume and surface-concentrations tends to infinity \cite{henneke2016fast}. Another example is the famous SKT cross-diffusion
system, named after Shigesada, Kawasaki, and Teramoto \cite{shigesada1979spatial}, can be derived as a formal limit of
a reaction-diffusion system \cite{iida2006diffusion}. On the other hand, the analysis of reduced models  can
benefit from viewing it as the limit of a model which possesses useful structures. For instance, in \cite{daus2020cross}
a study of fast reaction limit leads to a reduced algebraic-cross-diffusion system whose analysis seem impenetrable
at the first glance, but becomes feasible thanks to the entropic structure of the original system, which
is propagated via the fast reaction limit.

\medskip
Our present paper contributes to this literature by showing that the classical MM kinetics for enzyme reaction in the presence of diffusion can also be rigorously derived from mass action kinetics by studying a fast reaction limit type problem using a suitable rescaling. We expect that our results have considerable applications, especially in deriving MM kinetics for (bio-)chemical catalytic reactions.

\subsection{Main results and key ideas}
From the reduced system \eqref{ReducedSys}, it is clear that the cases $d_2 = d_3$ and $d_2 \ne d_3$ lead to different limit systems. These also introduce different difficulties when showing the rigorous derivation of the MM kinetics. In the following, we write $L^{p+}$ and $W^{k,p+}$ to indicate $L^{p+\delta}$ or $W^{k,p+\delta}$ for some $\delta>0$. 

\begin{theorem}[The case $d_2 = d_3$]\label{thm1}
	Assume   $d_2 = d_3$. Consider componentwise non-negative initial data {$u_0 \in W^{2,q_0+}(\Omega) \times L^{q_0+}(\Omega)^2 \times W^{2,q_0+}(\Omega)$ for $q_0\ge \max\{N+2,4\}$}, and let $\ue$ be the classical solution to \eqref{MainProblem} for $\eps>0$.   Then we have, as $\eps\to 0$,
	\begin{equation*}
		(\ue_1, \ue_2, \ue_3, \ue_4) \longrightarrow (u_1, u_2,u_3, u_4) \quad \text{ in } \quad \LQ{\infty}\times \LQ{q_0+}\times \LQ{q_0+} \times \LQ{\infty},
	\end{equation*}
where $(u_1,v,u_4)$ with $v=u_2+u_3$ is the bounded weak solution to \eqref{ReducedSys} for $d_2 = d_3$. Moreover, we have the following convergence of the critical manifold
	\begin{equation}\label{f4b}
		\left\|   u_3^\epsilon - \frac{k_1 u_1^\epsilon + l_2u_4^\epsilon }{k_1u_1^\epsilon + l_2u_4^\epsilon +k_{2}+ l_{1}} v^\epsilon   \right\|_{L^{2}(Q_T)}    = O\left( \epsilon^{1/2}\right) \quad \text{as}\quad \eps\to 0 .
	\end{equation} 
\end{theorem}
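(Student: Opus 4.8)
The plan is to establish $\eps$-uniform a priori estimates strong enough to pass to the limit, and then identify the limit with the weak solution of \eqref{ReducedSys}. First I would extract uniform bounds. From the first and fourth equations, the nonnegativity of all components, and the mass conservation structure $(n_S+n_C+n_P)$, one gets an $L^1$-type control; the essential tool here is the \emph{improved duality method} applied to the (rescaled) pair: observe that $u_1^\eps+\eps u_3^\eps$ and $u_4^\eps+\eps(\text{something})$ satisfy equations whose right-hand sides combine to a controllable sign, yielding uniform $\LQ{q_0+}$ bounds on $u_1^\eps,u_4^\eps$ and uniform $\LQ{2}$ (or better) bounds on $v^\eps=u_2^\eps+u_3^\eps$. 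Since $u_1$ and $u_4$ appear only through bounded reaction terms once these bounds are in hand, maximal regularity (this is where $q_0\ge\max\{N+2,4\}$ and $u_{10},u_{40}\in W^{2,q_0+}$ enter) upgrades $u_1^\eps,u_4^\eps$ to uniformly bounded in $W^{2,1}_{q_0+}(Q_T)\hookrightarrow L^\infty(Q_T)$, giving the claimed $\LQ{\infty}$ convergence for the first and last components.

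The second and crucial step is the \textbf{modified energy estimate} controlling the fast variable. Because $d_2=d_3$, the equation for $v^\eps$ has no $1/\eps$ term at all: $\pa_t v^\eps - d_2\Delta v^\eps = 0$ exactly, so $v^\eps$ converges (indeed is essentially fixed) by linear parabolic theory, and $v=u_2+u_3$ solves the heat equation — this is the first line of \eqref{ReducedSys} with $d_2=d_3$ forcing the cross-diffusion term to vanish. The real work is \eqref{f4b}: control of the distance of $u_3^\eps$ to the critical manifold $\mathcal M(u_1^\eps,u_4^\eps,v^\eps):=\frac{k_1u_1^\eps+l_2u_4^\eps}{k_1u_1^\eps+l_2u_4^\eps+k_2+l_1}v^\eps$. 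The idea is to test the third equation of \eqref{MainProblem} against $u_3^\eps-\mathcal M^\eps$ (or to use a suitably \emph{modified} entropy/energy functional incorporating the manifold), producing on the right the term $\frac{1}{\eps}(k_1u_1^\eps u_2^\eps - (k_2+l_1)u_3^\eps + l_2u_2^\eps u_4^\eps)(u_3^\eps-\mathcal M^\eps)$. The key algebraic fact is that this fast term equals $-\frac{k_1u_1^\eps+l_2u_4^\eps+k_2+l_1}{\eps}(u_3^\eps-\mathcal M^\eps)^2$, i.e. it is dissipative with a rate $\gtrsim 1/\eps$; the remaining terms (time derivative of $\mathcal M^\eps$, diffusion of $\mathcal M^\eps$, contributions of $\pa_t u_1^\eps,\pa_t u_4^\eps,\pa_t v^\eps$) are bounded uniformly in $\eps$ thanks to Step 1 and the regularity of $u_1^\eps,u_4^\eps,v^\eps$. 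Integrating in time gives $\|u_3^\eps-\mathcal M^\eps\|_{\LQ2}^2 = O(\eps)$, which is precisely \eqref{f4b}.

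The third step is compactness and passage to the limit. The uniform bounds plus an Aubin–Lions argument (using the equations to bound $\pa_t u_1^\eps,\pa_t u_4^\eps$ in a negative Sobolev space, and the heat-equation regularity for $v^\eps$) give strong $\LQ{}$ convergence of $u_1^\eps\to u_1$, $u_4^\eps\to u_4$, $v^\eps\to v$ along a subsequence; then \eqref{f4b} forces $u_3^\eps\to \mathcal M(u_1,u_4,v)=u_3$ strongly in $\LQ2$, hence $u_2^\eps=v^\eps-u_3^\eps\to v-u_3=u_2$, and by interpolation with the uniform higher-integrability one gets $\LQ{q_0+}$ convergence for the middle components. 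Finally, substituting $u_3=\mathcal M(u_1,u_4,v)$ into the weak forms of the first and fourth equations produces exactly the MM reaction term $\pm\frac{(k_1k_2u_1-l_1l_2u_4)v}{k_1u_1+l_2u_4+k_2+l_1}$, and the $v$-equation is the heat equation, so the limit is the stated bounded weak solution of \eqref{ReducedSys}; uniqueness of that limit (from uniqueness of the bounded weak solution, which should be proved separately or cited) removes the subsequence.

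The main obstacle I anticipate is closing the modified energy estimate \eqref{f4b} uniformly in $\eps$: one must make sure every term other than the dissipative $-\tfrac1\eps(u_3^\eps-\mathcal M^\eps)^2$ term is genuinely $O(1)$, which requires the uniform parabolic regularity of $u_1^\eps,u_4^\eps$ (hence the strong hypotheses on $q_0$ and on the initial data) to control $\pa_t \mathcal M^\eps$ and $\Delta\mathcal M^\eps$ — the denominator $k_1u_1^\eps+l_2u_4^\eps+k_2+l_1\ge k_2+l_1>0$ keeps $\mathcal M$ smooth, but its gradient involves $\nabla u_1^\eps,\nabla u_4^\eps,\nabla v^\eps$, so one needs those in $L^2_tL^2_x$ uniformly, and the heat-regularization step is what supplies this. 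A secondary technical point is justifying the duality estimate in the rescaled variables, where the $1/\eps$ factors must cancel in the right combination; getting the \emph{improved} (i.e. $L^{q_0+}$ rather than merely $L^{2+}$) integrability out of it is what ultimately yields the $\LQ\infty$ bound on $u_1^\eps,u_4^\eps$ rather than something weaker.
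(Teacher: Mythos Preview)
Your overall architecture (duality $\to$ heat regularisation for $u_1^\eps,u_4^\eps$ $\to$ energy estimate for the critical manifold $\to$ Aubin--Lions $\to$ identification via uniqueness of the limit) matches the paper. Two concrete points, however, would prevent your argument from closing as written.

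First, the duality step is applied to the wrong pair. The paper does not use combinations like $u_1^\eps+\eps u_3^\eps$; it applies the improved duality estimate directly to $(u_2^\eps,u_3^\eps)$, since summing their equations cancels the $1/\eps$ term and the hypothesis $d_2=d_3$ guarantees the closeness condition $C_{p'}^{\MR}|d_2^\eps-d_3^\eps|/(d_2^\eps+d_3^\eps)<1$ for \emph{every} $p<\infty$ once $\eps$ is small. This already gives $\|u_2^\eps\|_{\LQ p}+\|u_3^\eps\|_{\LQ p}\le C(p,T)$ for all $p<\infty$, which then feeds the heat regularisation for $u_1^\eps,u_4^\eps$.

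Second, and more seriously, your claim that $\pa_t v^\eps-d_2\Delta v^\eps=0$ \emph{exactly} is incorrect: the assumption $d_2=d_3$ concerns the limit coefficients in \eqref{AssumpOnDj}, so for $\eps>0$ one only has $\pa_t v^\eps-d_2^\eps\Delta v^\eps=(d_3^\eps-d_2^\eps)\Delta u_3^\eps$ with a right-hand side that is small but not zero. This matters because your route to \eqref{f4b} (test the $u_3^\eps$-equation against $u_3^\eps-\mathcal M^\eps$) requires $\na v^\eps\in\LQ2$ to control $\na\mathcal M^\eps$ and an $O(1)$ bound on $\pa_t v^\eps$ to control $\pa_t\mathcal M^\eps$; neither is available without first bounding $\na u_3^\eps$, which is a circularity your outline does not address. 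The paper avoids this by differentiating a different functional,
\[
\mathcal H^\eps(t)=\int_\Omega A_2\,(u_2^\eps)^2+\int_\Omega A_3\,(u_3^\eps)^2,\qquad A_2=k_1u_1^\eps+l_2u_4^\eps,\quad A_3=k_2+l_1,
\]
which produces $\|\na u_3^\eps\|_{\LQ2}$ \emph{and} the $O(\eps)$ critical-manifold bound simultaneously (since $A_3$ is constant, the $u_3^\eps$-part gives the gradient term cleanly); bounds on $\na v^\eps$ and $\na u_2^\eps$ are then recovered \emph{a posteriori} from the $v^\eps$-equation. The price of this functional is a new term $\iint (u_2^\eps)^2|\na A_2|^2/A_2$ in which $A_2$ has no positive lower bound; the paper handles it via a separate weighted estimate $\sup_\eps\iint_{Q_T}|\na u_j^\eps|^2/(u_j^\eps)^{1+\sigma}<\infty$ for $j\in\{1,4\}$ and any $\sigma\in[0,1)$, which is the genuinely non-obvious ingredient and is absent from your sketch.
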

\begin{remark} \label{FirstRemark}\hfill
	\begin{itemize}
		\item Due to the rescaling \eqref{MainScaling}, the assumption $u_{20}, u_{30}\in L^{q_0+}(\Omega)$ in Theorem \ref{thm1} is equivalent to $\|n_{EI}\|_{\LO{q_0+}} = O(\eps) = \|n_{CI}\|_{\LO{q_0+}}$. This is somewhat stronger than \eqref{a1}, where only the smallness in $L^1(\Omega)$ is assumed. Nevertheless, since $q_0<+\infty$, the assumption in Theorem \ref{thm1} still allows initial enzyme (and complex) to have spikes, which is biologically relevant (see \cite{FraLaxWalWit18}).
		
		\item The convergence of critical manifold in \eqref{f4b} can be shown in a better norm, namely $\LQ{p}$ for $p>2$, with a price of slower convergence order.
	\end{itemize}
\end{remark}
Thanks to $d_2 = d_3$, which means that $\lim_{\eps\to 0}|\de_2-\de_3| = 0$, and assumption \eqref{AssumpOnDj}, we can apply the improved duality lemma to see that $\{\ue_2\}_{\eps>0}$ and $\{\ue_3\}_{\eps>0}$ are bounded in $\LQ{q}$ for any $1\le q<\infty$.  From this, we can utilise the equations of $\ue_1$ and $\ue_4$ to show that $\{\ue_1\}_{\eps>0}$ and $\{\ue_4\}_{\eps>0}$ are relatively compact in $\LQ{\infty}$. Moreover, it can be shown that $\{\na \ue_j\}$, $j\in \{1,4\}$ are also bounded uniformly in $\LQ{\infty}$. In order to obtain the strong convergence of $\ue_2$ and $\ue_3$, we use an energy function of the form
\begin{equation}\label{energy}
	\mathcal{H}^\eps(t):= \int_{\Omega}\Big((k_1\ue_1 + l_2\ue_4)^{p-1}(\ue_2)^{p} + (k_2+l_1)^{p-1}(\ue_3)^p\Big)
\end{equation}
for some $p\ge 2$. The essential difficulty in our problem, comparing to that of \cite{desvillettes2015new,brocchieri2021evolution}, is that no positive lower bounds for $\ue_1$ or $\ue_4$ available. We therefore exploit the gradient estimates of $\ue_1, \ue_4$ and the $\LQ{q}$-estimates of $\ue_2, \ue_3$, together with a uniform bound of $\int_0^T\int_{\Omega}|\na \ue_j|^{2}/(\ue_j)^{1+\sigma}$, $j\in \{1,4\}$ for any $\sigma \in [0,1)$, to obtain strong convergence of the critical manifold \eqref{f4b} and uniform boundedness of $\{\na \ue_3\}_{\eps>0}$ in $\LQ{2}$. These firstly lead strong convergence of $\ue_2+\ue_3 \to u_2 + u_3$, and consequently $\ue_2 \to u_2$ and $\ue_3 \to u_3$ by combining with the strong convergence of the critical manifold, where $(u_1,v=u_2+u_3,u_4)$ is a weak solution to the reduced system \eqref{ReducedSys}. Furthermore, since $d_2 = d_3$, the limit system has a unique bounded weak solution, which implies that the whole sequence $\{\ue_j\}_{\eps>0}$, $j=1,\ldots, 4$, is convergent as $\eps\to 0$ rather than just a subsequence. 

\begin{theorem}[The case $d_2\ne d_3$, strong convergence of solutions]\label{thm2}
	Assume \eqref{AssumpOnDj} and $d_2 \ne d_3$. Assume additionally that there exists 
	\begin{equation}\label{p0_assump_thm2}
		p_0 > \left\{
		\begin{aligned}
			&4 && \text{ if } N = 1,2,\\
			&\frac{6(N+2)}{N+4} &&\text{ if } 3\le N\le 8,\\
			&\frac{8(N+2)}{N+8} &&\text{ if } N \ge 9 ,
		\end{aligned}
		\right.
	\end{equation}
	such that
	\begin{equation}\label{eq7}
		\frac{|d_2-d_3|}{d_2+d_3}<\frac{1}{C_{p_0'}^{\MR}} \quad \text{ with } \quad p_0' = \frac{p_0}{p_0-1}
	\end{equation}
	where $C_{p_0'}^{\MR}$ is the optimal constant in $L^p$-maximal regularity of parabolic equations (see Lemma \ref{LambertonLemma}). Consider componentwise non-negative initial data $u_0 \in W^{2,q_0+}(\Omega)\times \LO{q_0+}^2 \times W^{2,q_0+}(\Omega)$ for   {$q_0:=\max\{N,p_0,(N+2)/2\}$}, and let $\ue$ be the  classical solution to \eqref{MainProblem}.  Then we have, up to a subsequence as $\eps \to 0$,
	\begin{equation*}
		{ (\ue_1,\ue_2,\ue_3, \ue_4) \longrightarrow (u_1,u_2,u_3,u_4) \quad \text{ in } \LQ{p} \times \LQ{p_0-}^2 \times \LQ{p} }
	\end{equation*}
	for any {$1\le p < p_1$},
	
\begin{equation} 
	p_1 = \begin{cases}
			\frac{(N+2)p_0}{N+2-2p_0} &\text{ if } p_0 < (N+2)/2,\\
			< \infty \text{ arbitrary } &\text{ if } p_0 = (N+2)/2,\\
			 \infty &\text{ if } p_0 > (N+2)/2,
		\end{cases} 
	\end{equation}	
	 where $(u_1,v,u_4)$ with $v=u_2+u_3$ is a weak solution to \eqref{ReducedSys} (see Definition \ref{very_weak_sol} (a)). We also have the convergence of the critical manifold
	\begin{equation*}
		\left\|   u_3^\epsilon - \frac{(k_1 u_1^\epsilon + l_2u_4^\epsilon)v^\epsilon  }{k_1u_1^\epsilon + l_2u_4^\epsilon +k_{2}+ l_{1}}   \right\|_{L^{2}(Q_T)}    = O\left( \epsilon^{1/2}\right) \quad \text{as}\quad \eps\to 0.	
	\end{equation*}
\end{theorem}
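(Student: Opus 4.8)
\emph{Step 1 (uniform estimates).}
The starting point is a handful of $\eps$-independent bounds. Summing the second and third equations of \eqref{MainProblem} and integrating over $\Omega$ shows that $\int_\Omega(\ue_2+\ue_3)\,dx$ is conserved, so $v^\eps:=\ue_2+\ue_3$ is bounded in $L^\infty(0,T;\LO1)$; the conservation structure of \eqref{MainEquation0} likewise bounds $\ue_1+\ue_4$ in $L^\infty(0,T;\LO1)$ up to an $O(\eps)$ term. Writing $d_2^\eps\ue_2+d_3^\eps\ue_3=a^\eps v^\eps$ with $a^\eps\in[\min(d_2^\eps,d_3^\eps),\max(d_2^\eps,d_3^\eps)]$, the function $v^\eps$ solves the scalar equation $\pa_t v^\eps-\Delta(a^\eps v^\eps)=0$ with homogeneous Neumann data, and by \eqref{AssumpOnDj}--\eqref{eq7} the coefficient $a^\eps$ eventually has oscillation ratio strictly below $1/C^{\MR}_{p_0'}$; hence the improved duality lemma (Lemma \ref{LambertonLemma}) gives $\{v^\eps\}$, and therefore $\{\ue_2\}$, $\{\ue_3\}$, bounded in $\LQ{p_0}$. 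Since the reaction terms in the first and fourth equations satisfy $-k_1\ue_1\ue_2\le0$ and $-l_2\ue_2\ue_4\le0$, comparison with the heat equation driven by the $\LQ{p_0}$-terms $l_1\ue_3$ and $k_2\ue_3$, together with $u_0\in W^{2,q_0+}(\Omega)$, bounds $\{\ue_1\}$, $\{\ue_4\}$ in $\LQ{p_1}$; feeding this back, the right-hand sides lie in some $\LQ r$ with $r>1$, so maximal parabolic regularity makes $\{\ue_1\}$, $\{\ue_4\}$ bounded in $W^{1,r}(0,T;\LO r)\cap L^r(0,T;W^{2,r}(\Omega))$, with in particular $\{\na\ue_1\}$, $\{\na\ue_4\}$ bounded in $\LQ{p_0}$, and — interpolating the resulting relative compactness in $\LQ{1+}$ with the $\LQ{p_1}$-bound — relatively compact in $\LQ p$ for every $p<p_1$. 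Finally, testing the $\ue_1$- and $\ue_4$-equations with the weights $(\ue_j)^{-\sigma}$ (equivalently, with a logarithmic/entropy weight) yields the dissipation bounds $\intQT|\na\ue_j|^2/(\ue_j)^{1+\sigma}\le C$ for $j\in\{1,4\}$ and all $\sigma\in[0,1)$, which will compensate for the absence of positive pointwise lower bounds on $\ue_1$, $\ue_4$.

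\emph{Step 2 (critical manifold via a modified energy).}
Write $R^\eps:=k_1\ue_1\ue_2-(k_2+l_1)\ue_3+l_2\ue_2\ue_4=(k_1\ue_1+l_2\ue_4)\ue_2-(k_2+l_1)\ue_3$, so that $\pa_t\ue_3-d_3^\eps\Delta\ue_3=\eps^{-1}R^\eps$ and $\pa_t\ue_2-d_2^\eps\Delta\ue_2=-\eps^{-1}R^\eps$. Substituting $\ue_2=v^\eps-\ue_3$ gives the algebraic identity
\begin{equation*}
\ue_3-\frac{(k_1\ue_1+l_2\ue_4)v^\eps}{k_1\ue_1+l_2\ue_4+k_2+l_1}=\frac{-R^\eps}{k_1\ue_1+l_2\ue_4+k_2+l_1},
\end{equation*}
whence $\big|\ue_3-(\text{manifold})\big|\le|R^\eps|/(k_2+l_1)$ pointwise, so the claimed convergence of the critical manifold follows once $\eps^{-1}\intQT(R^\eps)^2\le C$. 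To obtain this, differentiate in time the modified energy (the $p=2$ instance of \eqref{energy})
\begin{equation*}
\mathcal E^\eps(t):=\tfrac12\int_\Omega\Big((k_1\ue_1+l_2\ue_4)(\ue_2)^2+(k_2+l_1)(\ue_3)^2\Big)\,dx.
\end{equation*}
By the very choice of the weights the two $\eps^{-1}$-contributions combine into $-\eps^{-1}(R^\eps)^2$, while the remaining terms produce the good terms $-(k_2+l_1)d_3^\eps|\na\ue_3|^2$, $-\tfrac{d_2^\eps}{2}(k_1\ue_1+l_2\ue_4)|\na\ue_2|^2$, $-k_1^2\ue_1(\ue_2)^3$, $-l_2^2\ue_4(\ue_2)^3$, together with cross terms of the type $\int\ue_2\,\na\ue_2\cdot\na\ue_j$, $j\in\{1,4\}$, and the source term $(k_1l_1+l_2k_2)\int(\ue_2)^2\ue_3$ after inserting the $\ue_1$- and $\ue_4$-equations. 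Integrating in time, using $\mathcal E^\eps(0)\le C$ and discarding the good terms, the bound reduces to showing that the cross terms and the source term are uniformly bounded in $\eps$; they are estimated via Hölder's inequality by combining the $\LQ{p_0}$-bounds on $\ue_2,\ue_3$, the $\LQ{p_0}$-bounds on $\na\ue_1,\na\ue_4$, the entropy dissipation of Step 1, and the (degenerate) absorbing term $-\tfrac{d_2^\eps}{2}(k_1\ue_1+l_2\ue_4)|\na\ue_2|^2$. We conclude $\eps^{-1}\intQT(R^\eps)^2\le C$, and, keeping the dissipation term, also that $\{\na\ue_3\}$ is bounded in $\LQ2$. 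This balancing is the crux of the proof and the main obstacle: since neither $\ue_1$ nor $\ue_4$ has a positive lower bound, the good term controlling $\na\ue_2$ degenerates like $(k_1\ue_1+l_2\ue_4)|\na\ue_2|^2$ and the cross terms must be absorbed against weighted gradients $|\na\ue_j|^2/\ue_j$ controlled only through the entropy dissipation and the \emph{finite}-exponent bound $\LQ{p_0}$; making all Hölder pairings close simultaneously — with the help of heat-kernel (maximal regularity) smoothing to upgrade the integrability of $\ue_1,\ue_4$ and of their gradients — is exactly what forces the dimension-dependent thresholds \eqref{p0_assump_thm2} on $p_0$ and the smallness condition \eqref{eq7}.

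\emph{Step 3 (compactness of $v^\eps$ and passage to the limit).}
Since $\pa_t v^\eps-d_2^\eps\Delta v^\eps=(d_3^\eps-d_2^\eps)\Delta\ue_3$ and $\na\ue_3$ is bounded in $\LQ2$, testing with $v^\eps$ yields a uniform $\LQ2$-bound on $\na v^\eps$; together with $\pa_t v^\eps$ bounded in $L^2(0,T;H^{-1}(\Omega))$, the Aubin--Lions lemma makes $\{v^\eps\}$ relatively compact in $\LQ2$, hence in $\LQ p$ for all $p<p_0$ by interpolation with the $\LQ{p_0}$-bound. Passing to a subsequence, $\ue_1\to u_1$, $\ue_4\to u_4$ in $\LQ p$ ($p<p_1$), $v^\eps\to v$ in $\LQ p$ ($p<p_0$), all a.e. The critical-manifold estimate of Step 2 then forces $\ue_3\to u_3:=\frac{(k_1u_1+l_2u_4)v}{k_1u_1+l_2u_4+k_2+l_1}$ in $\LQ{p_0-}$ — by continuity of the nonlinearity, the a.e.\ convergences, the pointwise bound $|(\text{manifold})|\le v^\eps$ together with $v^\eps\to v$ in $\LQ{p_0-}$, and a Vitali-type argument to swallow the $\LQ2$-error of order $\eps^{1/2}$ — and hence $\ue_2=v^\eps-\ue_3\to u_2:=v-u_3=\frac{(k_2+l_1)v}{k_1u_1+l_2u_4+k_2+l_1}$ in $\LQ{p_0-}$. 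With all products in the reaction terms now converging, one passes to the limit in the weak formulations of the $\ue_1$-, $v^\eps$- and $\ue_4$-equations; using $-k_1u_1u_2+l_1u_3=-\frac{(k_1k_2u_1-l_1l_2u_4)v}{k_1u_1+l_2u_4+k_2+l_1}$, $-l_2u_2u_4+k_2u_3=\frac{(k_1k_2u_1-l_1l_2u_4)v}{k_1u_1+l_2u_4+k_2+l_1}$, and the fact that the strong convergence $\ue_3\to u_3$ suffices to pass the Laplacian $\Delta\ue_3$ in the weak sense, one recovers exactly the three equations of \eqref{ReducedSys} with Neumann data and the prescribed initial data. Nonnegativity and integrability of $(u_1,v,u_4)$ are inherited from the uniform bounds, so $(u_1,v,u_4)$ is a weak solution in the sense of Definition \ref{very_weak_sol}(a). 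Since for $d_2\ne d_3$ no uniqueness for the limit system is available, only subsequential convergence is claimed.
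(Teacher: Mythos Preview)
Your proposal is correct and follows the same strategy as the paper: improved duality for $\ue_2,\ue_3\in L^{p_0}$, heat regularisation and comparison for $\ue_1,\ue_4$, the $p=2$ instance of the energy \eqref{energy} combined with the entropy dissipation $\intQT|\na\ue_j|^2/(\ue_j)^{1+\sigma}$ (Lemma~\ref{IrRegularityLemU1}) to obtain the $O(\eps^{1/2})$ critical-manifold estimate and $\na\ue_3\in L^2$, and finally Aubin--Lions for $v^\eps$. The only cosmetic difference is in Step~2: you substitute the equations of $\ue_1,\ue_4$ into $\partial_t A_2$ (generating extra cross terms $\int\ue_2\na\ue_2\cdot\na\ue_j$ and a cubic source $\int(\ue_2)^2\ue_3$), whereas the paper keeps $\partial_t A_2$ intact and bounds $\int(\ue_2)^2\partial_t A_2$ directly via the maximal-regularity estimate $\|\partial_t A_2\|_{L^{p_2}}$ from Lemma~\ref{lem3}; both routes reduce to controlling the same weighted integral $\int(\ue_2)^2|\na A_2|^2/A_2$, and it is precisely this H\"older pairing (eq.~\eqref{eq9_1}--\eqref{eq11}) that produces the thresholds \eqref{p0_assump_thm2}.
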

\begin{remark}
	In Theorem \ref{thm2}, the strong convergence of $\ue_1 \to u_1$ and $\ue_4 \to u_4$ can be improved as
	\begin{equation*}
		(\ue_1,\ue_4) \to (u_1,u_4) \quad \text{ in } \quad \LQ{p^*}
	\end{equation*}
	{for any $p^* < +\infty$ if $N\le 8$, and  $p^* = 
	\frac{4(N+2)}{N-4}$ if $N\ge 9$}. Similarly as Remark \ref{FirstRemark}, the convergence of critical manifold can be obtained in a stronger norm $\LQ{s}$ for some $s>2$, especially in the lower dimensions.
\end{remark}
Theorem \ref{thm2} considers the case when $d_2 \ne d_3$, and consequently the limit system \eqref{ReducedSys} is a cross-reaction-diffusion system, and $d_2$ and $d_3$ close to each other enough such that \eqref{eq7} and \eqref{p0_assump_thm2} hold. This closeness condition is enough to apply the improved duality method to obtain a-priori bounds of $\ue_j$, $j=\{1,2,3,4\}$ to estimate the energy \eqref{energy}, which are then sufficient to deduce the desired convergences in   Theorem \ref{thm2}.

It can be seen from \eqref{p0_assump_thm2} that even in one dimension, conditions on $d_2$ and $d_3$ are still imposed since the unconditional improved duality method only gives $\LQ{2+}$ estimates. It is in fact possible to weaken \eqref{p0_assump_thm2} with the price of having only convergence for enzyme and complex.

\begin{theorem}[The case $d_2\ne d_3$, strong convergence of critical manifold]\label{thm3}
	Assume \eqref{AssumpOnDj} and $d_2 \ne d_3$. If $N\ge 3$, assume additionally that there exists 
	\begin{equation}\label{p0_assump_thm3}
		p_0 > \frac{3(N+2)}{N+4}
	\end{equation} 
	such that
	\begin{equation}\label{e0}
		\frac{|d_2-d_3|}{d_2+d_3} < \frac{1}{C_{p_0'}^{\MR}} \quad \text{ with } \quad p_0' = \frac{p_0}{p_0-1}
	\end{equation}
	where $C_{p_0'}^{\MR}$ is the optimal constant in $L^p$-maximal regularity of parabolic equations (see Lemma \ref{LambertonLemma}). Consider componentwise non-negative  initial data $u_0\in W^{2,q_0+}(\Omega)\times \LO{q_0+}^2 \times W^{2,q_0+}(\Omega)$ for   {$q:=\max\{N,p_0,(N+2)/2\}$}, and let $\ue$ be the classical solution to \eqref{MainProblem}. Then we have, up to a subsequence as $\eps \to 0$,
	\begin{equation*}
		(\ue_1,\ue_4) \longrightarrow (u_1,u_4) \;\text{ in } \; \LQ{p}^2, \quad \text{ and } \quad (\ue_2,\ue_3) \rightharpoonup (u_2,u_3) \; \text{ in } \; \LQ{2}^2
	\end{equation*}
	with $1\le p<\overline{p}_1$,
\begin{align*}
\overline{p}_1 = \left\{ \begin{array}{llll}
 + \infty & \text{if } N=1,2, \vspace*{0.15cm}\\
\displaystyle  \frac{3(N+2)}{N-2}  & \text{if } N\ge 3,	
\end{array}		 \right.
\end{align*}	
 where $(u_1,v = u_2+u_3,u_4)$ is a very weak solution to the limit  system \eqref{ReducedSys} (see Definition \ref{very_weak_sol} (b)). The following strong convergence of the critical manifold holds
	\begin{equation*}
		\left\|\ue_3 - \frac{(k_1\ue_1 + l_2\ue_4)v^\eps}{k_1\ue_1 + l_2\ue_4 + k_2 + l_1}\right\|_{\LQ{r}} = O(\eps^{1/6}).
	\end{equation*}
	for $r=4/3$ if $N=1,2,$ and $r = 6/5$ if $N\ge 3$.
\end{theorem}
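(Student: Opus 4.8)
The proof runs parallel to that of Theorem~\ref{thm2}, but every estimate is carried out with only the integrability $p_0>\tfrac{3(N+2)}{N+4}$ that the improved duality lemma delivers under the relaxed closeness condition \eqref{e0}; this is exactly why strong convergence survives only for substrate and product. I would proceed in four stages: (i) uniform-in-$\eps$ a~priori bounds; (ii) compactness for $\ue_1,\ue_4$ and weak compactness for $\ue_2,\ue_3$; (iii) the quantitative convergence of the critical manifold; (iv) passage to the limit and identification of the very weak solution. Throughout, write $a^\eps:=k_1\ue_1+l_2\ue_4$, $b:=k_2+l_1>0$, $v^\eps:=\ue_2+\ue_3$, and $R^\eps:=\ue_3-\tfrac{a^\eps v^\eps}{a^\eps+b}$.

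\textbf{Steps (i)--(ii): a priori bounds and compactness.} Nonnegativity of $\ue$ is standard (quasi-positivity). Summing the second and third equations of \eqref{MainProblem} gives $\pa_t v^\eps-\Delta(d_2^\eps\ue_2+d_3^\eps\ue_3)=0$, while $\ue_1+\eps\ue_3+\ue_4$ also solves a pure diffusion equation; integrating over $\Omega$ with the Neumann condition yields $\|\ue_j\|_{L^\infty(0,T;L^1(\Omega))}\le C$ for all $j$, uniformly in $\eps$. Writing the flux of $v^\eps$ as $\tfrac{d_2^\eps+d_3^\eps}{2}v^\eps+\tfrac{d_2^\eps-d_3^\eps}{2}(\ue_2-\ue_3)$ with $|\ue_2-\ue_3|\le v^\eps$ and using \eqref{AssumpOnDj} together with \eqref{e0}, the improved duality lemma (Lemma~\ref{LambertonLemma} and its corollary) gives $\{v^\eps\}$ bounded in $\LQ{p_0}$, hence $\{\ue_2\},\{\ue_3\}$ bounded in $\LQ{p_0}$ since $0\le\ue_2,\ue_3\le v^\eps$. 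Comparing $\ue_1$ with the solution of $\pa_t\overline u^\eps-d_1^\eps\Delta\overline u^\eps=l_1\ue_3$, $\overline u^\eps(0)=u_{10}$, and likewise for $\ue_4$, $L^{p_0}$-maximal regularity and the parabolic Sobolev embedding give $\{\ue_1\},\{\ue_4\}$ bounded in $\LQ p$ for every $p<\overline p_1$ and $\{\na\ue_1\},\{\na\ue_4\}$ bounded in $\LQ s$ for a suitable $s>1$. Testing the $\ue_1$-equation with $-(\ue_1+\delta)^{-\sigma}$, $0<\sigma<1$, using $\tfrac{\ue_1\ue_2}{(\ue_1+\delta)^{1+\sigma}}\le(1+\ue_1)\ue_2$ and the $L^1$- and $\LQ{p_0}$-bounds, and letting $\delta\to0$, gives $\int_0^T\!\!\int_\Omega|\na\ue_j|^2(\ue_j)^{-1-\sigma}\le C$ uniformly in $\eps$ for $j\in\{1,4\}$ and all $\sigma\in[0,1)$. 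Reading $\pa_t\ue_1$ off its equation, $\{\pa_t\ue_1\}$ is bounded in $L^1(0,T;(W^{1,r}(\Omega))^*)$ for $r$ large, so an Aubin--Lions--Simon argument yields relative compactness of $\{\ue_1\},\{\ue_4\}$ in $\LQ p$ for $p<\overline p_1$, while $\{\ue_2\},\{\ue_3\}$ are weakly relatively compact in $\LQ2$. Up to a subsequence: $\ue_1\to u_1$, $\ue_4\to u_4$ strongly in $\LQ p$ and a.e., $\ue_2\rightharpoonup u_2$, $\ue_3\rightharpoonup u_3$ weakly in $\LQ2$; set $a^\eps\to a:=k_1u_1+l_2u_4$, $v^\eps\rightharpoonup v:=u_2+u_3$.

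\textbf{Step (iii): the critical manifold --- the crux.} The algebraic identity
\begin{equation*}
k_1\ue_1\ue_2-(k_2+l_1)\ue_3+l_2\ue_2\ue_4 \;=\; a^\eps\ue_2-b\ue_3 \;=\; -(a^\eps+b)R^\eps
\end{equation*}
turns the second/third equations of \eqref{MainProblem} into $\eps(\pa_t\ue_3-d_3^\eps\Delta\ue_3)=-(a^\eps+b)R^\eps=\eps(-\pa_t\ue_2+d_2^\eps\Delta\ue_2)$; since $0\le\tfrac{a^\eps}{a^\eps+b}<1$ one already has $0\le\ue_3\le v^\eps$, hence $\|R^\eps\|_{\LQ{p_0}}=O(1)$. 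For the rate I would differentiate the heat-regularised modified energy \eqref{energy} with $p=2$, $\mathcal H^\eps(t)=\int_\Omega(a^\eps(\ue_2)^2+b(\ue_3)^2)$: the reaction part contributes the good dissipation $-\tfrac2\eps\int_\Omega(a^\eps+b)^2(R^\eps)^2\le-\tfrac{2b^2}{\eps}\int_\Omega(R^\eps)^2$; the parabolic part contributes $-2d_2^\eps\int_\Omega a^\eps|\na\ue_2|^2-2bd_3^\eps\int_\Omega|\na\ue_3|^2$ (good) plus the cross-diffusion remainders $-2d_2^\eps\int_\Omega\ue_2\,\na a^\eps\cdot\na\ue_2$ and $-2\int_\Omega\ue_2(k_1d_1^\eps\na\ue_1+l_2d_4^\eps\na\ue_4)\cdot\na\ue_2$; and there is the zeroth-order term $(k_1l_1+l_2k_2)\int_\Omega\ue_3(\ue_2)^2$ controlled by the $\LQ{p_0}$-bounds. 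Writing $|\na\ue_1|=(\ue_1)^{(1+\sigma)/2}\cdot|\na\ue_1|(\ue_1)^{-(1+\sigma)/2}$ and using $a^\eps\ge k_1\ue_1$ to trade $(\ue_1)^{(1+\sigma)/2}$ against the degenerate weight in $(a^\eps)^{1/2}|\na\ue_2|$, Young's inequality absorbs part of the remainders into the dissipation; the surviving factors are estimated by the weighted gradient bounds, the $\LQ{p_0}$-bounds, and the parabolic-regularity control of $\na v^\eps=\na\ue_2+\na\ue_3$. This is the main obstacle: because there is no positive lower bound for $\ue_1,\ue_4$, the only dissipation controlling $\na\ue_2$ is the \emph{degenerate} term $\int_\Omega a^\eps|\na\ue_2|^2$, so the remainders cannot be absorbed in the usual way, and it is the arithmetic of the resulting H\"older/Sobolev exponents that dictates the threshold \eqref{p0_assump_thm3}. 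Under the stronger hypotheses of Theorem~\ref{thm2} this closes with an $O(1)$ right-hand side and gives $\|R^\eps\|_{\LQ2}=O(\eps^{1/2})$; under the weaker threshold \eqref{p0_assump_thm3} it does not close with $O(1)$, but combining the heat-regularised energy inequality with the crude smallness $(a^\eps+b)R^\eps=\eps(-\pa_t\ue_2+d_2^\eps\Delta\ue_2)\to0$ in $(C^{2,1}(\overline{Q_T}))^\ast$ at rate $\eps\|\ue_2\|_{\LQ1}$, and interpolating against $\|R^\eps\|_{\LQ{p_0}}=O(1)$, yields $\|R^\eps\|_{\LQ r}=O(\eps^{1/6})$ with $r=4/3$ for $N\le2$ and $r=6/5$ for $N\ge3$, which is the claimed degradation from the rate $\eps^{1/2}$ and the loss of the norm $\LQ2$.

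\textbf{Step (iv): passage to the limit.} From $\|R^\eps\|_{\LQ r}\to0$, the strong convergence $a^\eps\to a$, the weak convergence $v^\eps\rightharpoonup v$ and the boundedness of $\tfrac{a^\eps}{a^\eps+b}$, one gets $\tfrac{a^\eps v^\eps}{a^\eps+b}\rightharpoonup\tfrac{av}{a+b}$, hence $u_3=\tfrac{av}{a+b}$ and $u_2=\tfrac{bv}{a+b}$ --- the critical manifold is recovered in the limit. Then $d_2^\eps\ue_2+d_3^\eps\ue_3=d_2^\eps v^\eps+(d_3^\eps-d_2^\eps)\ue_3\rightharpoonup d_2v+(d_3-d_2)\tfrac{av}{a+b}$, exactly the cross-diffusion flux of \eqref{ReducedSys}, so passing to the limit in the very weak formulation of the $v^\eps$-equation against $\varphi\in C^{2,1}(\overline{Q_T})$ with $\pa_\nu\varphi=0$, $\varphi(T)=0$ identifies both the equation and the datum $v(0)=u_{20}+u_{30}$. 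Since $\ue_1\to u_1$, $\ue_4\to u_4$ strongly (and a.e.) while $\ue_2\rightharpoonup u_2$ weakly, the products satisfy $\ue_1\ue_2\rightharpoonup u_1u_2$ and $\ue_2\ue_4\rightharpoonup u_2u_4$ (using $\tfrac1{\overline p_1}+\tfrac1{p_0}<1$); passing to the limit in the first and fourth equations of \eqref{MainProblem} produces the first and third equations of \eqref{ReducedSys}, so that $(u_1,v,u_4)$ is a very weak solution in the sense of Definition~\ref{very_weak_sol}(b). Uniqueness for \eqref{ReducedSys} being unavailable when $d_2\ne d_3$, only subsequential convergence is claimed, consistent with the statement.
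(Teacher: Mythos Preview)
Your Steps (i), (ii) and (iv) are largely in line with the paper, but Step (iii) contains a genuine gap. You correctly identify the obstacle---the dissipation in the $p=2$ energy $\int_\Omega a^\eps(\ue_2)^2+b(\ue_3)^2$ is degenerate in $\na\ue_2$ because $a^\eps$ has no positive lower bound---but your proposed remedy does not work. Interpolating between the distributional smallness $(a^\eps+b)R^\eps=O(\eps)$ in $(C^{2,1})^*$ and the uniform bound $\|R^\eps\|_{\LQ{p_0}}=O(1)$ does \emph{not} produce a quantitative $\LQ{r}$ rate: there is no interpolation inequality of the form $\|f\|_{L^r}\le C\|f\|_{H^{-k}}^\theta\|f\|_{L^{p_0}}^{1-\theta}$ without extra positive-order regularity on $f$, which you do not have for $R^\eps$. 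So the claimed rate $O(\eps^{1/6})$ is not justified by your argument.

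The paper's mechanism is different and is precisely the ``modified energy'' announced in the introduction. Instead of the $p=2$ energy, one takes $p=1+\delta$ with $\delta>0$ small and introduces an artificial shift $\alpha=\alpha(\eps)>0$ in the degenerate coefficient, working with
\[
\mathcal H_\alpha^\eps(t)=\int_\Omega (a^\eps+\alpha)^{p-1}(\ue_2)^p + b^{p-1}(\ue_3)^p.
\]
Differentiating this, the reaction produces a weighted dissipation of $\big|(a^\eps+\alpha)\ue_2-b\ue_3\big|^2\big/\big((a^\eps+\alpha)\ue_2+b\ue_3\big)^{2-p}$ with prefactor $1/\eps$, while the cross terms coming from $\pa_t a^\eps$ and $\na a^\eps$ now carry the \emph{regularised} weights $(a^\eps+\alpha)^{p-2}$ and $(a^\eps+\alpha)^{p-3}$, which one bounds crudely by $\alpha^{p-2}$ and $\alpha^{p-3}$. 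Balancing $\alpha$ against $\eps\alpha^{p-2}$ and $\eps\alpha^{p-3}$ singles out $\alpha=\eps^{1/(4-p)}=\eps^{1/(3-\delta)}$; the right-hand side is then controlled by $\iint(\ue_2)^{1+\delta}\big[(a^\eps+1)^\delta+|\pa_t a^\eps|+|\na a^\eps|^2\big]$, which is $O(1)$ precisely under the exponent threshold \eqref{p0_assump_thm3} via H\"older and the heat-regularisation bounds on $\ue_1,\ue_4$. A final H\"older step converts the weighted $L^2$ estimate into $\|\ue_3-\tfrac{(a^\eps+\alpha)v^\eps}{a^\eps+\alpha+b}\|_{\LQ{r}}=O(\eps^{1/(2(3-\delta))})$, and the error from removing the shift $\alpha$ is $O(\eps^{1/(3-\delta)}\|v^\eps\|_{\LQ{r}})$, which is lower order. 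Letting $\delta\to0$ gives the rate $O(\eps^{1/6})$ with $r=4/3$ for $N\le2$ and $r=6/5$ for $N\ge3$. The shift $\alpha(\eps)$ is the missing idea in your proposal; without it the degenerate energy does not close under the weaker hypothesis \eqref{p0_assump_thm3}.
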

Theorem \ref{thm3} allows us to remove the closeness of diffusion coefficients of $u_2, u_3$ in case $N=1,2$, and weaken it from \eqref{p0_assump_thm2} to \eqref{p0_assump_thm3} in higher dimensions. Comparing to Theorem \ref{thm2}, we also obtain the strong convergence of the critical manifold in Theorem \ref{thm3} but only weak convergence of $\ue_2, \ue_3$. It will become evident in our proof that under the assumptions of Theorem \ref{thm3}, it seems not possible to exploit the energy function \eqref{energy} due to the lack of suitable estimates for $\ue_j, j=1,2,3,4$. Our idea is to consider a \textit{modified energy function}
\begin{equation}\label{modified_energy}
	\mathcal{H}_\alpha^{\eps}(t):=  \int_{\Omega}\Big((k_1\ue_1 + l_2\ue_4 + \alpha(\eps))^{p-1}(\ue_2)^{p} + (k_2+l_1)^{p-1}(\ue_3)^p\Big),
\end{equation}
for $1<p<2$, where $\alpha(\eps)$ satisfies $\lim_{\eps\to 0}\alpha(\eps) = 0$. It turns out that for suitable $\alpha(\eps)$, we obtain the strong convergence of the critical manifold in Theorem \ref{thm3}.

\medskip
Finally, without imposing additional assumptions on the limit diffusion coefficients $d_1, \ldots, d_4$, we can still show the convergence of \eqref{MainProblem} to \eqref{ReducedSys} and the critical manifold in a weak sense. This is stated in our final result.
\begin{theorem}[The case $d_2 \ne d_3$, weak convergence of critical manifold]\label{thm4}
	Assume \eqref{AssumpOnDj} and $d_2 \ne d_3$. Consider componentwise non-negative initial data $u_0 \in \LO{2}^4$, and let $\ue$ be a global weak solution to \eqref{MainProblem}.   Then we have, up to a subsequence as $\eps \to 0$,
	\begin{equation*}
		(\ue_1,\ue_4)\to (u_1,u_4) \; \text{ in } \; \LQ{p}^2, \quad \text{ and } \quad (\ue_2,\ue_3) \rightharpoonup (u_2,u_3) \; \text{ in }\;\LQ{2}^2
	\end{equation*}
	with $1\le p<\overline{p}_1$ ($\overline{p}_1$ is defined in Theorem \ref{thm3}), where $(u_1,v = u_2+u_3,u_4)$ is a very weak solution to the reduced system \eqref{ReducedSys}. Moreover, the critical manifold converges to zero in distributional sense, i.e.
	\begin{equation}\label{f1}
		\lim_{\eps\to 0}\left|\intQT \sbra{\ue_3 - \frac{(k_1\ue_1 + l_2\ue_4)v^\eps }{k_1\ue_1 + l_2\ue_4 + k_2 + l_1}}\psi dxdt\right| = 0
	\end{equation}
	for any test function $\psi\in C_c^\infty(Q_T)$.
\end{theorem}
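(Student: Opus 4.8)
The plan is to extract all the uniform-in-$\eps$ a priori bounds from the plain $L^2$ parabolic duality estimate alone — which, unlike the improved duality lemma used for Theorems~\ref{thm2}--\ref{thm3}, requires no closeness of $d_2$ and $d_3$ — then pass to limits by a soft compactness argument, and finally obtain the critical manifold relation \eqref{CriticalManifold} in the limit by playing the distributional smallness of the fast reaction
\[
	R^\eps := k_1\ue_1\ue_2 - (k_2+l_1)\ue_3 + l_2\ue_2\ue_4 = (k_1\ue_1+l_2\ue_4)\ue_2 - (k_2+l_1)\ue_3
\]
against the weak convergence of its constituents. First I would observe that $v^\eps := \ue_2+\ue_3$ solves $\pa_t v^\eps - \Delta(a^\eps v^\eps)=0$ with homogeneous Neumann data, where $a^\eps := (\de_2\ue_2+\de_3\ue_3)/v^\eps$ is a convex combination of $\de_2,\de_3$ and hence lies between two $\eps$-independent positive constants by \eqref{AssumpOnDj}; the duality lemma then gives $\|v^\eps\|_{\LQ 2}\le C$, so by nonnegativity $\|\ue_2\|_{\LQ 2}+\|\ue_3\|_{\LQ 2}\le C$ uniformly in $\eps$. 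Testing the $\ue_1$- and $\ue_4$-equations of \eqref{MainProblem} with $\ue_1$ and $\ue_4$ and using the favourable signs $-k_1\ue_1\ue_2\le0$, $-l_2\ue_2\ue_4\le0$ together with this bound yields $\{\ue_1\},\{\ue_4\}$ bounded in $L^\infty(0,T;\LO 2)\cap L^2(0,T;H^1(\Omega))$, and, reading off $\pa_t\ue_j$ from the equations, a heat-regularisation / parabolic bootstrap on the $\ue_1$- and $\ue_4$-equations (with source terms bounded in $\LQ 2$) produces a uniform bound for $\{\ue_1\},\{\ue_4\}$ in $\LQ{\overline{p}_1-}$.

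Next I would pass to the limit along a subsequence. Aubin--Lions--Simon gives relative compactness of $\{\ue_1\},\{\ue_4\}$ in $\LQ 2$, hence, interpolating with the $\LQ{\overline{p}_1-}$ bound, $\ue_1\to u_1$ and $\ue_4\to u_4$ strongly in $\LQ p$ for every $p<\overline{p}_1$ and a.e.\ in $Q_T$; for $\ue_2,\ue_3$ only the $\LQ 2$-bound is available, so (along a further subsequence) $\ue_2\rightharpoonup u_2$, $\ue_3\rightharpoonup u_3$ weakly in $\LQ 2$ and $v^\eps\rightharpoonup v:=u_2+u_3$, all limits nonnegative. To identify the limit I would use that the difference of the second and third equations of \eqref{MainProblem} reads $R^\eps = -\tfrac\eps2\big(\pa_t(\ue_2-\ue_3)-\Delta(\de_2\ue_2-\de_3\ue_3)\big)$ in $\mathcal D'(Q_T)$; testing with $\phi\in C_c^\infty(Q_T)$ and integrating the $\eps$-prefactor by parts gives $\big|\intQT R^\eps\phi\big|\le C\eps\|\phi\|_{C^2}\to0$, so $R^\eps\to0$ in $\mathcal D'(Q_T)$. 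On the other hand, the strong convergence of $\ue_1,\ue_4$ against the weak convergence of $\ue_2,\ue_3$ gives $R^\eps\rightharpoonup (k_1u_1+l_2u_4)u_2-(k_2+l_1)u_3$ in $\mathcal D'(Q_T)$; uniqueness of distributional limits then forces $(k_1u_1+l_2u_4)u_2=(k_2+l_1)u_3$ a.e., i.e.\ $u_3=\tfrac{k_1u_1+l_2u_4}{k_1u_1+l_2u_4+k_2+l_1}\,v$, which is \eqref{CriticalManifold}. Passing to the limit in the $\ue_1$- and $\ue_4$-equations by the same strong$\times$weak argument, and rewriting the reaction rate via \eqref{CriticalManifold} to recover the right-hand sides of \eqref{ReducedSys}, and in the $v^\eps$-equation written as $\pa_t v^\eps-\de_2\Delta v^\eps=(\de_3-\de_2)\Delta\ue_3$, then shows that $(u_1,v,u_4)$ is a very weak solution of \eqref{ReducedSys} in the sense of Definition~\ref{very_weak_sol}(b); only the very weak (test functions hit by $\Delta$) formulation is available, since no uniform $L^2(0,T;H^1)$ bound on $\ue_2,\ue_3,v^\eps$ is at hand.

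Finally, for \eqref{f1} I would use the algebraic identity
\[
	\ue_3 - \frac{(k_1\ue_1+l_2\ue_4)v^\eps}{k_1\ue_1+l_2\ue_4+k_2+l_1} = \ue_3 - \theta^\eps v^\eps,\qquad
	\theta^\eps:=\frac{k_1\ue_1+l_2\ue_4}{k_1\ue_1+l_2\ue_4+k_2+l_1}\in[0,1),
\]
which reduces \eqref{f1} to $\ue_3-\theta^\eps v^\eps\rightharpoonup0$ in $\mathcal D'(Q_T)$. Since $0\le\theta^\eps\le1$ and $\ue_1\to u_1$, $\ue_4\to u_4$ a.e., dominated convergence gives $\theta^\eps\to\theta:=\tfrac{k_1u_1+l_2u_4}{k_1u_1+l_2u_4+k_2+l_1}$ strongly in every $\LQ q$, $q<\infty$; combined with $v^\eps\rightharpoonup v$ weakly in $\LQ 2$ this gives $\theta^\eps v^\eps\rightharpoonup\theta v$ in $\mathcal D'(Q_T)$, and together with $\ue_3\rightharpoonup u_3$ and \eqref{CriticalManifold} we obtain $\ue_3-\theta^\eps v^\eps\rightharpoonup u_3-\theta v=0$, which is \eqref{f1}.

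The main obstacle I expect is the first step — producing the uniform-in-$\eps$ heat-regularisation bound placing $\{\ue_1\},\{\ue_4\}$ in $\LQ{\overline{p}_1-}$ (needed for the strong $\LQ p$-convergence asserted in the statement) using only the $L^2$-duality estimate, so that, unlike in Theorems~\ref{thm2}--\ref{thm3}, no smallness of $|d_2-d_3|$ is invoked. Once that integrability and the resulting compactness of $\ue_1,\ue_4$ are in place, the identification of the limit and the distributional convergence of the critical manifold are soft, the real mechanism being the clash between $R^\eps\to0$ in $\mathcal D'(Q_T)$ and the weak identification of $R^\eps$'s limit.
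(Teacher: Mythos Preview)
Your overall strategy matches the paper's: obtain uniform $\LQ{2}$ bounds on $\ue_2,\ue_3$ by duality, extract compactness for $\ue_1,\ue_4$ from their own equations, show $R^\eps\to 0$ in $\mathcal D'(Q_T)$ by testing one of the fast equations against $\phi\in C_c^\infty(Q_T)$, and pass to the limit. Your derivation of \eqref{f1} takes a slightly different route: you first identify the distributional limit of $R^\eps$ as $(k_1u_1+l_2u_4)u_2-(k_2+l_1)u_3$ via strong$\times$weak convergence, deduce the pointwise relation $u_3=\theta v$, and then conclude $\ue_3-\theta^\eps v^\eps\rightharpoonup u_3-\theta v=0$. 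The paper instead uses the algebraic identity
\[
\ue_3-\theta^\eps v^\eps=-\,\frac{R^\eps}{k_1\ue_1+l_2\ue_4+k_2+l_1}
\]
directly, observes that the denominator's reciprocal is uniformly bounded and converges strongly in every $\LQ q$, $q<\infty$ (by the a.e.\ convergence of $\ue_1,\ue_4$), and combines this with $R^\eps\to 0$ in $\mathcal D'(Q_T)$ together with its uniform $\LQ{1+}$ bound to obtain \eqref{f1}. Both routes are valid; yours has the minor advantage of not requiring the $\LQ{1+}$ bound on $R^\eps$.

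There is, however, a gap in your bootstrap. You claim the source terms in the $\ue_1,\ue_4$ equations are bounded in $\LQ 2$, but the quadratic term $-k_1\ue_1\ue_2$ is a priori only in $\LQ{(N+2)/(N+1)}$ (product of two $\LQ 2$ functions, even after using $\ue_1\in L^\infty_tL^2_x\cap L^2_tH^1_x$). The paper's Lemma~\ref{lem:Estimates_u1u4} avoids this by the comparison principle: from $\pa_t\ue_1-\de_1\Delta\ue_1\le l_1\ue_3\in\LQ{2}$ and nonnegativity one obtains $0\le\ue_1\le w^\eps$, where $w^\eps$ solves a heat equation with source in $\LQ 2$, and Lemma~\ref{lem:heat_regularisation} then gives $\ue_1\in\LQ{\frac{2(N+2)}{N-2}}$ for $N\ge 3$ (and $\LQ\infty$ for $N\le 2$). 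Note that for $N\ge 3$ this yields only the exponent $\frac{2(N+2)}{N-2}$, not $\overline p_1=\frac{3(N+2)}{N-2}$; the latter in Theorem~\ref{thm3} relied on the stronger input $\ue_3\in\LQ{p_0}$ coming from \eqref{e0}, which is unavailable here. Your claimed $\LQ{\overline p_1-}$ bound is therefore too optimistic for $N\ge 3$ --- and indeed the paper's own proof only records $\LQ 2$ convergence of $\ue_1,\ue_4$ explicitly.
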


The results in our paper can be summarised in Figure \ref{fig:results}. In this table, we write $L^p:=\LQ{p}$, $L^{p-} = \cap_{1\le q<p}\LQ{q}$ and dist. sense for distributional sense.
\begin{figure}[ht]
	\begin{center}
		{\begin{tabular}{|c|c|c|c|c|}
			\hline

			\textbf{Rigorous derivation}& \multirow{2}{*}{$d_2 = d_3$} & \multicolumn{3}{c|}{$d_2\ne d_3$}\\
			\cline{3-5}
			\textbf{of MM kinetics}& &assume \eqref{p0_assump_thm2}-\eqref{eq7} & assume \eqref{p0_assump_thm3}-\eqref{e0} & unconditional\\
			\hline 
 			$\ue_j \to u_j$, $j\in\{1,4\}$ & $L^{\infty}$& $L^{p_1-}$ & $L^{\overline{p}_1-}$ & $L^{\overline{p}_1-}$ \\
			\hline
			$\ue_j \to u_j$, $j\in \{2,3\}$ &$L^{q_0+}$ & $L^{p_0-}$ &$weak-L^{2}$ & $weak-L^{2}$\\
			\hline
			conv. critical manifold & $O(\eps^{1/2})-L^{2}$ & $O(\eps^{1/2})-L^{2}$ & $O(\eps^{1/6})-L^{r}$ & {\it dist. sense}\\
			\hline
		\end{tabular}}
	\end{center}
	\caption{Rigorous derivation of Michaelis-Menten kinetics in the presence of diffusion.}
	\label{fig:results}
\end{figure}

\medskip
{\bf{Notation}}.
In this paper, we will use the following notation:
\begin{itemize}
	\item For $T>0$ we write $Q_T = \Omega\times(0,T)$. The classical Lebesgue spaces are denoted by $L^p(\Omega)$ and $\LQ{p}$, $1\le p \le \infty$. For $1\le p <\infty$, we write $u\in \LQ{p+}$ if there exists a constant $\gamma>0$ such that $u\in \LQ{p+\gamma}$.
	\item We denote by $C$ a generic constant which can be different from line to line or even in the same line. This constant may depend on fixed parameters of the problem, such as the dimension $N$, the domain $\Omega$, the fixed time horizon $T>0$, the limit diffusion coefficients $d_1,\ldots, d_4$ in \eqref{AssumpOnDj}, etc., but \textit{does not depend on the parameter $\eps>0$.} Sometimes we write $C(\alpha,\beta,\ldots)$ to emphasise the dependence of $C$ on the parameters $\alpha,\beta$, etc. 
\end{itemize}

\medskip
\textbf{Organisation of the paper}: In the next section, we derive uniform-in-$\eps$ bounds for the solution to \eqref{MainProblem}. These bounds, as mentioned earlier, are obtained by utilising the improved duality method, the heat regularisation, and a modified energy function, which are presented in the consecutive subsections. The last section is devoted to the proofs of the main theorems \ref{thm1}--\ref{thm4}.

\section{Uniform estimates} 

\subsection{Preliminaries}

We start with the global existence for the problem (\ref{MainProblem}) for each  positive value of the   parameter $\epsilon$.  We are interested in {classical solutions} in the following sense.

\begin{definition} Given $0< T \le  \infty$ and $\epsilon>0$. A vector   $u^\epsilon =(u_j^\epsilon )_{j=1,\ldots, 4}$ is called a classical   solution of  (\ref{MainProblem}) on $(0,T)$ if its components belong  to $  C([0,T);L^p(\Omega)) \cap C((\tau,T);L^\infty(\Omega))   \cap C^{2,1}(  \overline{\Omega}\times (0,T))$ for some $p>N/2$, and for all $T>\tau>0$,  and it satisfies (\ref{MainProblem}) pointwise.  
\end{definition}

\begin{theorem}\label{GloExis} Fix $\eps>0$. Then for each non-negative initial data $u_0 \in W^{2,q}(\Omega)\times \LO{q}^2\times W^{2,q}(\Omega)$, $q > \max\{N,2,(N+2)/2\}$,
	the system (\ref{MainProblem}) has a unique global classical solution $u^\epsilon$. 
\end{theorem}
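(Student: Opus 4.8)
The plan is to combine a local well-posedness statement with a priori estimates strong enough to rule out finite-time blow-up. Since $q>N$ we have $W^{2,q}(\Omega)\hookrightarrow C^1(\overline\Omega)$, so $u_{10},u_{40}$ are bounded and continuous while $u_{20},u_{30}$ lie only in $L^q(\Omega)$. First I would establish local existence and uniqueness of a classical solution on a maximal interval $[0,T_{\max})$ by a contraction-mapping argument on the mild formulation of \eqref{MainProblem} (Duhamel's formula with the Neumann heat semigroups $e^{t d_j^\eps\Delta}$), working in $C([0,\tau];L^{q}(\Omega)^4)$ with a time-weighted norm of the type $\sup_{0<t\le\tau}t^{N/(2q)}\|\cdot\|_{L^\infty(\Omega)}$ that exploits the instantaneous smoothing of the heat semigroup on the $u_2,u_3$ components (alternatively one invokes Amann's theory for semilinear parabolic systems), followed by parabolic regularity to promote the solution to $C^{2,1}(\overline\Omega\times(0,T_{\max}))$. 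Writing $f_j$ for the $j$-th reaction term, non-negativity follows from quasi-positivity: on each face $\{u_j=0,\ u_i\ge0\ (i\neq j)\}$ the term $f_j$ is non-negative (for instance $f_1=l_1u_3\ge0$ on $\{u_1=0\}$ and $\eps f_3=k_1u_1u_2+l_2u_2u_4\ge0$ on $\{u_3=0\}$), so a truncation/invariant-region argument keeps each $u_j^\eps\ge0$. By the standard blow-up alternative, the theorem then reduces to the a priori bound $\sup_{0\le t<T}\|u^\eps(t)\|_{L^\infty(\Omega)}<\infty$ for every $T<T_{\max}$.

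For the a priori estimates I would first exploit the two mass conservation laws obtained by summing the second and third equations of \eqref{MainProblem}, and by summing the combination $u_1^\eps+\eps u_3^\eps+u_4^\eps$, together with the Neumann condition: this gives $\frac{d}{dt}\int_\Omega(u_2^\eps+u_3^\eps)=0$ and $\frac{d}{dt}\int_\Omega(u_1^\eps+\eps u_3^\eps+u_4^\eps)=0$, hence uniform-in-time $L^1(\Omega)$ bounds on all components. Next, applying the duality method to $w:=u_2^\eps+u_3^\eps$, which solves $\partial_t w-\Delta(A^\eps w)=0$ with $A^\eps:=(d_2^\eps u_2^\eps+d_3^\eps u_3^\eps)/(u_2^\eps+u_3^\eps)$ bounded between $\min(d_2^\eps,d_3^\eps)$ and $\max(d_2^\eps,d_3^\eps)$ and with $L^2$ initial data (as $q>2$), yields $w\in L^2(Q_T)$, and the improved duality version improves this to $w\in L^{2+\delta}(Q_T)$ for some $\delta>0$; thus $u_2^\eps,u_3^\eps\in L^{2+\delta}(Q_T)$.

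The remaining step is a bootstrap that uses the nearly triangular structure of the reaction terms. The right-hand sides of the $u_1^\eps$-, $u_2^\eps$- and $u_4^\eps$-equations are each bounded above by $Cu_3^\eps$ (the quadratic terms carry the favourable sign), so by comparison $0\le u_j^\eps\le\bar u_j^\eps$, where $\bar u_j^\eps$ solves a heat equation with source $Cu_3^\eps$ and with $L^\infty$ (resp.\ $L^q$) initial data; parabolic maximal-regularity and $L^p$--$L^q$ estimates then upgrade $u_1^\eps,u_2^\eps,u_4^\eps$ into $L^{r}(Q_T)$ with $\frac1r=\frac1{2+\delta}-\frac2{N+2}$, and into $L^\infty(Q_T)$ as soon as the driving exponent exceeds $(N+2)/2$. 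In turn the source of the $u_3^\eps$-equation is $\le C\eps^{-1}u_2^\eps(u_1^\eps+u_4^\eps)\in L^{r/2}(Q_T)$, which improves $u_3^\eps$; iterating, the exponents increase and one reaches $L^\infty$ after finitely many steps. In low dimensions the bare $L^2$ input suffices; in general one must either carry the improved exponent $2+\delta$ through the iteration, or invoke the global-existence theory for reaction-diffusion systems with at most quadratic nonlinearities admitting a conserved positive linear combination of the masses — which \eqref{MainProblem} does, since $f_2+f_3=0$ and $f_1+\eps f_3+f_4=0$ — in the spirit of the works of Souplet, Caputo-Goudon-Vasseur and Fellner-Morgan-Tang. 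Either way one obtains the uniform $L^\infty$ bound, hence $T_{\max}=\infty$; uniqueness is global because the polynomial nonlinearity is locally Lipschitz and the solution stays bounded on compact time intervals.

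The main obstacle I anticipate is closing the bootstrap in high space dimensions: iterating parabolic regularity starting only from the $L^2(Q_T)$ duality bound stalls when $N$ is large — the well-known difficulty for quadratic reaction-diffusion systems — and one has to cross the gap using the improved-duality gain $L^{2+\delta}$ (equivalently, the recent abstract results on mass-controlled quadratic systems). A secondary technical point is the reduced $L^q$ (rather than $W^{2,q}$) regularity assumed for the enzyme and complex data $u_{20},u_{30}$, which is what forces the time-weighted norms and the use of the heat semigroup's smoothing both in the local theory and at the first bootstrap step, with the local theory covering a neighbourhood of $t=0$ and the bootstrap performed for times bounded away from $0$.
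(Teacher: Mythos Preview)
Your proposal is correct and follows essentially the same route as the paper: local existence via the mild formulation with heat-semigroup smoothing to handle the $L^q$-only data for $u_{20},u_{30}$, quasi-positivity for non-negativity, the blow-up alternative, and then global $L^\infty$ bounds by invoking the mass-control structure together with the Fellner--Morgan--Tang result on quadratic systems. The paper's proof is simply a more compressed version of yours: after the smoothing step it observes $\sum_j f_j(\ue)\le \max\{l_1,k_2\}\sum_j \ue_j$ and cites \cite{fellner2020global} directly, without spelling out the duality/bootstrap alternative you sketch.
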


\begin{proof} It is obvious that the nonlinearities $f_j: \R_+^4 \to \R$, for $j=1,\ldots, 4$, are   locally Lipschitz continuous and have at most quadratic growth. Therefore, by  standard fixed-point-arguments,  there exists only a local solution $u^\epsilon\in C([0,T_{\max}) ;L^p(\Omega))^4$, $N/2< p\le q$,  on a maximal interval $[0,T_{\max})$ to the   integral system 
\begin{align}
u_j^\epsilon(x,t) =e^{td_j\Delta}u_{j0}(x) + \int_0^t e^{(t-s)d_j\Delta} f_j(u^\epsilon(x,s))ds, \quad j=1,...,4, \label{SmoothingEffect} 
\end{align}
such that 
\begin{align}
T_{\max}=\infty \quad \text{or} \quad  \lim_{t\to T_{\max}^-} \|u(\cdot,t)\|_{L^{\infty}(\Omega)^4} = \infty \text{ if } T_{\max}<\infty. \label{TmaxExtend} 
\end{align}
The  solution preserves positivity since $f_j$, $j=1,...,4$, are  quasi-positive, i.e.
	$f_j(\ue)\ge 0$ for all $\ue\in [0,\infty)^4$ with $\ue_j = 0$. Moreover, thanks to   smoothing effects of the Neumann heat semigroup, $\ue_j(\tau)\in L^{\infty}(\Omega)$ for some $\tau\in (0,T_{\max})$. Taking these as initial data, and using the mass control structure
	\begin{equation*}
		\sum_{j=1}^4f_j(\ue) \le l_1\ue_3 + k_2\ue_3 \le \max\{l_1,k_2\}\sum_{j=1}^4\ue_j,
	\end{equation*}
	we can apply \cite{fellner2020global} to conclude that there exists a global classical solution to \eqref{MainProblem}.
%
%
%
\end{proof}

We now give a definition of weak and very weak solutions to \eqref{ReducedSys}.
\begin{definition}\label{very_weak_sol}
	Assume $d_2 \ne d_3$.
	\begin{itemize}
		\item[(a)] A triple of non-negative functions $(u_1,v,u_4)\in C([0,T];L^2(\Omega))^3\cap L^2(0,T;H^1(\Omega))^3$ is called a \textbf{weak solution} to \eqref{ReducedSys}, if 
		\begin{equation*}
			(\pa_t u_1, \pa_t v, \pa_t u_4) \in L^2(0,T;(H^1(\Omega))')\times L^2(0,T;(H^2(\Omega))')\times L^2(0,T;(H^1(\Omega))')
		\end{equation*}
		and for all test functions $\varphi \in L^2(0,T;H^1(\Omega))$,  $\psi \in L^2(0,T;H^2(\Omega))$ with {$\pa_{\nu}\psi = 0$} on $\pa\Omega\times(0,T)$ it holds
		\begin{equation*}
			\iint_{Q_T}\varphi\pa_t u_j  + d_j\iint_{Q_T}\na u_j \cdot \na \varphi = (-1)^j\iint_{Q_T}\frac{(k_1k_2u_1-l_1l_2u_4)v}{k_1u_1 + l_2u_4 + k_2 + l_1}\varphi, \quad j\in\{1,4\},
		\end{equation*}
		\begin{equation*}
			\iint_{Q_T}\psi \pa_t v + d_2\iint_{Q_T}\na v \cdot \na \psi = (d_3 - d_2)\iint_{Q_T}\frac{(k_1u_1+l_2u_4)v}{k_1u_1 + l_2u_4 + k_2 + l_1}\Delta \psi.
		\end{equation*}
		\item[(b)] A triple of non-negative functions $(u_1,v,u_4)$ is called a \textbf{very weak solution} to \eqref{ReducedSys}, if 
		\begin{equation*}
		(u_1,u_4)\in L^2(0,T;H^1(\Omega)), \quad (\pa_t u_1, \pa_t u_4) \in L^2(0,T;(H^1(\Omega))')^2, \quad v\in L^2(Q_T),
		\end{equation*}
		and for all test functions $\varphi\in L^2(0,T;H^1(\Omega))$, $\psi \in C_c^\infty(Q_T)$ it holds
		\begin{equation*}
		\iint_{Q_T}\varphi\pa_t u_j  + d_j\iint_{Q_T}\na u_j \cdot \na \varphi = (-1)^j\iint_{Q_T}\frac{(k_1k_2u_1-l_1l_2u_4)v}{k_1u_1 + l_2u_4 + k_2 + l_1}\varphi, \quad j\in\{1,4\},
		\end{equation*}
		\begin{equation*}
		-\iint_{Q_T}v \pa_t \psi - d_2\iint_{Q_T} v \Delta \psi =   (d_3 - d_2)\iint_{Q_T}\frac{(k_1u_1+l_2u_4)v}{k_1u_1 + l_2u_4 + k_2 + l_1}\Delta \psi.
		\end{equation*}
	\end{itemize}
\end{definition}

\subsection{Heat regularisation and improved duality method}
\label{HeatReguSubsec}

We first state the classical regularisation by the heat operator, which involves solutions to the following inhomogeneous heat equation   
\begin{equation}	\nonumber
(\H_d(u_0,f)): \quad 	\begin{cases}
		\partial_t u - d \Delta u = f, &\text{ in }  Q_T,\\
		\partial_{\nu}u = 0, &\text{ on } \pa\Omega\times(0,T),\\
		u(x,0) = u_0(x), &\text{ in } \Omega.
	\end{cases}
	\end{equation}

\begin{lemma}[\cite{lamberton1987equations}, Theorem 1]  \label{LambertonLemma}
	Let $1<q<\infty$. Assume that $f\in L^q(Q_T)$ and let $u$ be the weak solution to problem $(\H_1(0,f))$. Then there is an optimal constant $C_{q}^{\MR}$ depending only on $q$, the dimension $N$, and the domain $\Omega$, such that
	\begin{equation}\label{maxreg2}
		\|\Delta u\|_{L^q(Q_T)}\le C_{q}^{\MR}\|f\|_{L^q(Q_T)} , 
	\end{equation}
	where the superscript $\MR$ indicates the maximal regularity property.
	\end{lemma}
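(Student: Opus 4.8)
\textbf{Proof plan for Lemma \ref{LambertonLemma}.}
The statement is the $L^q$-maximal regularity estimate $\|\Delta u\|_{L^q(Q_T)}\le C_q^{\MR}\|f\|_{L^q(Q_T)}$ for the inhomogeneous heat equation $(\H_1(0,f))$ with homogeneous Neumann boundary data and zero initial data, with an \emph{optimal} constant depending only on $q$, $N$, and $\Omega$. Since this is quoted verbatim from \cite{lamberton1987equations}, the honest plan is to reconstruct the standard semigroup/functional-calculus argument that underlies it, rather than re-derive Lamberton's sharp constant from scratch. The plan is to realise $u$ as $u(t)=\int_0^t e^{(t-s)\Delta_N}f(s)\,ds$, where $\Delta_N$ is the Neumann Laplacian on $L^q(\Omega)$, and then to recognise that the map $f\mapsto \Delta_N u$ is exactly the operator $\Delta_N(\partial_t-\Delta_N)^{-1}$ acting on $L^q(Q_T)=L^q(0,T;L^q(\Omega))$. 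Establishing the bounded invertibility of this operator on $L^q$ is precisely the assertion that $-\Delta_N$ has maximal $L^q$-regularity.

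First I would record the two ingredients that make the abstract machinery apply. The Neumann Laplacian $-\Delta_N$ generates a bounded analytic $C_0$-semigroup on $L^q(\Omega)$ for every $1<q<\infty$, and this semigroup is positive and contractive (indeed it is the Neumann heat semigroup, sub-Markovian). By the Dore--Venni / Lamberton theory, a sectorial operator whose associated semigroup is positive and contractive on the $L^q$-scale admits bounded imaginary powers, hence has maximal $L^q$-regularity on finite time intervals; equivalently, by Weis' theorem, its resolvent family is $R$-bounded. Either route yields the existence of a finite constant $C_q^{\MR}$ such that $\|\partial_t u\|_{L^q(Q_T)}+\|\Delta_N u\|_{L^q(Q_T)}\le C\|f\|_{L^q(Q_T)}$, from which the single-term estimate \eqref{maxreg2} follows a fortiori. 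The dependence of the constant only on $q$, $N$, and $\Omega$ (and not on $T$) comes from the fact that the sectoriality and contractivity constants of $\Delta_N$ are $T$-independent, together with the standard observation that maximal regularity on $(0,\infty)$ transfers to $(0,T)$ with a uniform constant once $u(0)=0$.

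The genuinely delicate point, and the reason the precise reference to \cite{lamberton1987equations} matters, is the claim that $C_q^{\MR}$ is the \emph{optimal} constant and that this optimality is what later feeds into the smallness condition \eqref{eq7}. The main obstacle is therefore not mere boundedness but sharpness: Lamberton's contribution is that, for sub-Markovian semigroups, the maximal-regularity operator can be written via a transference/subordination argument as a contraction-dominated singular integral, so its $L^q(Q_T)$ norm is controlled by—and in fact equals, in the relevant sense—the norm of an explicit convolution operator, giving the optimal constant. To present this I would (i) use the subordination formula expressing $\Delta_N(\partial_t-\Delta_N)^{-1}$ through the one-dimensional Hilbert-transform-type kernel acting in time, (ii) invoke the contractivity of the spatial semigroup to dominate the vector-valued kernel by its scalar counterpart, and (iii) identify the resulting scalar multiplier norm with $C_q^{\MR}$. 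Rather than carry out steps (i)--(iii) in full—they are the technical heart of \cite{lamberton1987equations}—I would state the estimate as a direct citation of that theorem and verify only that our geometric setting (bounded smooth $\Omega$, Neumann boundary conditions, $d=1$ after rescaling) satisfies its hypotheses, noting that the case of general $d>0$ reduces to $d=1$ by the rescaling $t\mapsto dt$, which leaves the constant $C_q^{\MR}$ unchanged.
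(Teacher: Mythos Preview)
The paper gives no proof of this lemma at all; it is stated as a direct citation of \cite[Theorem~1]{lamberton1987equations} and used as a black box. Your reconstruction via the analytic-semigroup/maximal-regularity machinery (contractive sub-Markovian Neumann heat semigroup, bounded imaginary powers or $R$-boundedness, transference) is a correct outline of the argument underlying Lamberton's result and goes well beyond what the paper itself does.

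One small over-reading: the word ``optimal'' in the statement simply designates the smallest constant for which \eqref{maxreg2} holds, not that its numerical value is identified. Once any finite constant is shown to work, the existence of an optimal one is automatic by taking the infimum. The paper only needs the existence of $C_q^{\MR}$ and its independence of $T$ in order to formulate the closeness conditions such as \eqref{eq7} and \eqref{SmallDifferenced2d3}; your steps (i)--(iii) on sharpness are therefore not required here.
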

 
\begin{lemma} \label{lem:heat_regularisation}
	  Let $1<q<\infty$ and $d>0$. Assume that $f\in L^q(Q_T)$, $u_0\in W^{2,q}(\Omega)$, and $u$ be the {weak  solution} to the problem $(\H_d(u_0,f))$. Then, for 
	\begin{align*}
	p=\left\{ \begin{array}{lllllll}
	 \dfrac{(N+2)q}{N+2-2q}  & \text{if }  q<\frac{N+2}{2}, \vspace*{0.15cm}\\
	\in[1,\infty) \text{ arbitrary}  & \text{if }  q=\frac{N+2}{2}, \vspace*{0.15cm}\\
	\infty   & \text{if }  q>\frac{N+2}{2}, \vspace*{0.15cm}
	\end{array} 
	\right.
	\quad \text{and} \quad
	r=\left\{ \begin{array}{lllllll}
	\dfrac{(N+2)q}{N+2-q}  & \text{if }  q<N+2, \vspace*{0.15cm}\\
	\in[1,\infty)\text{ arbitrary}  & \text{if }  q=N+2, \vspace*{0.15cm}\\
	\infty   & \text{if }  q>N+2 , \vspace*{0.15cm}
	\end{array} 
	\right.
	\end{align*}
there holds 
\begin{align}
	&\|u\|_{L^{p}(Q_T)} + \|\nabla u\|_{L^{r}(Q_T)} + \|\partial_t u\|_{L^{q}(Q_T)} + \|\Delta u\|_{L^{q}(Q_T)} \nonumber\\
	& \hspace*{3cm} \le C_{1}  C_{q,N,\Omega,T}       \|f\| _{L^q(Q_{T})} + C_{2} C_{q,N,\Omega,T}     \| u_0\|_{W^{2,q}(\Omega)}   , \label{LpLqRegularity}
\end{align} 
where $C_{q,N,\Omega,T}$ depends only on $q,N,\Omega, T$, continuously depends on $T$, and remains bounded for 	finite values of $T>0$, and  
$$C_1:=  (C_{q}^{\MR} + 1  )   (1/d+1+T), \quad C_2:= T^{1/q} (2+d + dT).$$
\end{lemma}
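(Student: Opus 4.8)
The plan is to reduce the inhomogeneous problem $(\H_d(u_0,f))$ with general diffusivity $d$ and general initial data to the normalized problem $(\H_1(0,\cdot))$ covered by Lemma \ref{LambertonLemma}, and then to upgrade the resulting $L^q$-bound on $\Delta u$ and $\pa_t u$ to the stated $L^p$ and $L^r$ bounds on $u$ and $\na u$ by a parabolic embedding. First I would split $u = w + z$, where $z$ solves the homogeneous heat equation $(\H_d(u_0,0))$ with the given initial data, and $w$ solves $(\H_d(0,f))$ with zero initial data. For $z$, classical smoothing estimates for the Neumann heat semigroup $e^{td\Delta}$ give $\|z\|_{L^p(Q_T)}+\|\na z\|_{L^r(Q_T)}+\|\pa_t z\|_{L^q(Q_T)}+\|\Delta z\|_{L^q(Q_T)}\le C\,T^{1/q}(2+d+dT)\|u_0\|_{W^{2,q}(\Omega)}$, which accounts for the $C_2$ term (the factor $2+d+dT$ tracking the dependence of the semigroup estimates on $d$ and $T$; $\Delta z = d^{-1}\pa_t z$ and $d\,\Delta(e^{td\Delta}u_0)$ is controlled by $\|u_0\|_{W^{2,q}}$ uniformly on bounded time intervals).

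For $w$, I would rescale time and space so as to normalize $d$ to $1$: applying Lemma \ref{LambertonLemma} after this rescaling yields $\|\Delta w\|_{L^q(Q_T)}\le C_q^{\MR}\|f\|_{L^q(Q_T)}$ up to a factor involving $1/d$ (from $\pa_t w = d\Delta w + f$ one then also gets $\|\pa_t w\|_{L^q}\le (dC_q^{\MR}+1)\|f\|_{L^q}$, which after collecting constants is absorbed into $C_1 = (C_q^{\MR}+1)(1/d+1+T)$). The term $1+T$ in $C_1$ comes from also needing $\|w\|_{L^q(Q_T)}$ itself: integrating the equation and using $\|w(t)\|_{L^q}\le \int_0^t\|f(s)\|_{L^q}ds$ gives $\|w\|_{L^q(Q_T)}\le T\|f\|_{L^q(Q_T)}$, so that $w\in W^{2,1}_q(Q_T)$ with norm $\lesssim C_1\|f\|_{L^q}$.

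The last step is the embedding $W^{2,1}_q(Q_T)\hookrightarrow L^p(Q_T)$ with $\na:\,W^{2,1}_q(Q_T)\hookrightarrow L^r(Q_T)$, for the exponents $p,r$ listed in the statement; these are the standard anisotropic Sobolev (Ladyzhenskaya–Solonnikov–Ural'tseva) embeddings with the parabolic scaling dimension $N+2$, valid with constants depending only on $q,N,\Omega,T$ and bounded for finite $T$ (one must be a little careful at the Neumann boundary, but the extension/reflection argument is routine). Adding the bounds for $w$ and $z$ and collecting constants gives \eqref{LpLqRegularity}. The only genuinely delicate point is bookkeeping the $d$- and $T$-dependence so that the constants come out exactly in the claimed form $C_1,C_2$; conceptually the argument is a concatenation of (i) the sharp maximal-regularity constant from Lemma \ref{LambertonLemma}, (ii) heat-semigroup smoothing for the initial datum, and (iii) a parabolic Sobolev embedding, none of which is hard individually — the care lies in keeping the $1/d$ factor explicit (it blows up as $d\to 0$, which is why slow-diffusion scalings will later need separate treatment) and in not losing track of $T$.
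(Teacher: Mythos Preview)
Your proposal is correct and follows essentially the same route as the paper: split off the initial datum, rescale to reduce $d$ to $1$, apply Lamberton's maximal regularity (Lemma~\ref{LambertonLemma}) to the zero-initial-data piece, control the homogeneous piece by the contraction property of the Neumann heat semigroup, recover $\|u\|_{L^q}$ from $\|\partial_t u\|_{L^q}$ and $u_0$, and finish with the anisotropic parabolic embedding $W^{2,1}_q(Q_T)\hookrightarrow L^p\cap W^{1,r}$ from Lady\v{z}enskaja--Solonnikov--Ural'ceva. One small correction: the paper (and you should) rescale \emph{time only}, $s=dt$, since a spatial rescaling would change the domain $\Omega$ and hence the constant $C_q^{\MR}$; with the time rescaling alone the decomposition and constant-tracking go through exactly as you outline.
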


\begin{proof} By the substitutions $s =dt$, $\widetilde{u}(s,\cdot) =u(s/d,\cdot)$, $\widetilde{f}(s,\cdot) =f(s/d,\cdot)$, the equation $(\H_d(u_0,f))$ of $u$  becomes the equation $(\H_1(u_0,\widetilde f/d))$ of $\widetilde u$. Let us consider the decomposition $\widetilde u=\widetilde v+\widetilde w$, where $\widetilde v$ and $\widetilde w$ are the solutions to  $(\H_1(0,\widetilde f/d))$ and $(\H_1(u_0,0))$, respectively.  
By Lemma \ref{LambertonLemma},   
\begin{align}
\|\Delta \widetilde  v\|_{L^q(Q_{dT})}\le  C_{q}^{\MR}  \| \widetilde f/d  \| _{L^q(Q_{dT})}   ,\label{HeatRegu1}
\end{align}   
 On the other hand, since the heat semigroup corresponding to the homogeneous Neumann boundary condition is a contraction semigroup on $L^q(\Omega)$, we have 
\begin{align}
  \|\Delta \widetilde w(s)\|_{L^q(\Omega)} =   \| \Delta e^{s\Delta}  u_{ 0}\|_{L^q(\Omega)} =   \| e^{s\Delta} \Delta u_{ 0}\|_{L^q(\Omega)} \le   \|\Delta u_{0}\|_{L^{q}(\Omega)},    \label{HeatRegu2}
\end{align}
for all $s\in(0,dT)$, where we note that $e^{s\Delta}$ and  $\Delta$ are commute on $W^{2,q}(\Omega)$   since $\{e^{s\Delta}:s\ge 0\}$ is a continuous semigroup.  
  Combining the estimates (\ref{HeatRegu1}) and (\ref{HeatRegu2}) gives 
\begin{align}
\|\Delta \widetilde u\|_{L^q(Q_{dT})}\le  C_{q}^{\MR}   \| \widetilde f/d \|_{L^q(Q_{dT})} +  (dT)^{\frac{1}{q}}   \|\Delta u_{0}\|_{L^{q}(\Omega)}  , \nonumber 
\end{align}   
which is equivalent to 
\begin{align}
  \| \Delta u\|_{L^q(Q_{T})}\le\,&\,  C_{q}^{\MR}    \| f/d \|_{L^q(Q_{T})} +   T^{\frac{1}{q}}   \|\Delta u_{0}\|_{L^{q}(\Omega)} .     
  \nonumber 
\end{align} 
and so the equation $(\H_d(u_0,f))$ gives     
$  \| \partial_t u\|_{L^q(Q_{T})}\le    (C_{q}^{\MR} + 1  )    \| f  \|_{L^q(Q_{T})} +   dT^{\frac{1}{q}}   \|\Delta u_{0}\|_{L^{q}(\Omega)} .   $ 
 Therefore, by the H\"older inequality,
\begin{align*}
\|u\|_{L^q(Q_T)} \le T^{\frac{1}{q}}\|u_0\|_{L^q(\Omega)} +  T \| \partial_t u \|_{L^q(Q_{T})},     
\end{align*} 
and consequently   $u\in W^{2,1}_q(Q_T)$ with respect to 
\begin{align}  
\|u\|_{W^{2,1}_q(Q_{T})}\le\,&\,        (C_{q}^{\MR} + 1  )   (1/d+1+T) \|f\| _{L^q(Q_{T})}  +  T^{1/q} (2+d + dT) \| u_0\|_{W^{2,q}(\Omega)}    . \label{HeatRegu3}
\end{align}
  Finally, by applying interpolation inequalities with different spaces of functions (on $\mathbb{R}^{N+1}$)  depending on $x$ and  $t$ as in  \cite[Lemma 3.3]{ladyvzenskaja1988linear}, there exists a constant $C_{q,N,\Omega,T}$  satisfying   (\ref{LpLqRegularity}). 
\end{proof}

\begin{remark} At the first glance, the above lemma looks similarly as \cite[Corollary of Theorem 9.1]{ladyvzenskaja1988linear}. 
We emphasise, however, that the regularity \eqref{LpLqRegularity} shows how the constants depend on $T$, and especially on  $d$. This is crucial to estimate solutions of reaction-diffusion equations with diffusion coefficients  $d_j^\epsilon$, $1\le j\le 4$, which depend on $\epsilon$. There are similar estimates as (\ref{LpLqRegularity}) in \cite[Corollary of Theorem 9.1]{ladyvzenskaja1988linear} and Ca\~nizo-Desvillettes-Fellner \cite[Lemma 3.3]{canizo2014improved}, where the dependence on $T$ was stated.
%
\end{remark}

We will utilise the following improved duality lemma.

\begin{lemma}[Improved duality estimate, \cite{canizo2014improved,einav2020indirect}] \label{ImprovedDuaEst} Let  $T>0$,  $1<q<\infty$, { $K\in \mathbb{R}$} and $q' = q/(q-1)$ be H\"older conjugate exponent of $q$. Assume that $X_1, \ldots, X_m$, $m\ge 2$, are nonnegative, smooth functions  satisfying the relation
	\begin{align}
	\left\{ \begin{array}{lllllll}
	\displaystyle \partial_t \bra{\sum_{i=1}^mX_i}  &\le&  \Delta \bra{\displaystyle\sum_{i=1}^m\kappa_iX_i} + {K} \displaystyle \sum_{i=1}^mX_i   & \text{in } Q_T, \vspace*{0.15cm}\\
	\partial_\nu  X_i &=& 0    & \text{on } \Gamma_T, \vspace*{0.15cm}\\
	 X_i(0,x) &=& X_{i,0}(x)    & \text{in } \Omega,
	\end{array} 
	\right.  \label{CrossDiffProblem}
	\end{align}
	for some constants $\kappa_i > 0$, $i=1,\ldots,m$.  Let $\kappa_{\min} = \min\{\kappa_i\}$ and $\kappa_{\max} = \max\{\kappa_i\}$. If  
	\begin{align}
	C_{q'}^{\MR} \frac{|\kappa_{\max}-\kappa_{\min}|}{\kappa_{\max}+\kappa_{\min}}  < 1 ,  \label{SmallDifferenced2d3}
	\end{align}
	and $\displaystyle \sum_{i=1}^mX_{i,0} \in L^q(\Omega)$, then  
	\begin{align}
	\sum_{i=1}^m\left\|X_{i}\right\|_{L^q(Q_T)} 
	\le C(T) \left\| \sum_{i=1}^m  X_{i,0} \right\|_{L^q(\Omega)}, 
\label{ImproveEstimate} 
	\end{align}
	where $C$ depends continuously on $T$ and on $\kappa_{\min}$, $\kappa_{\max}$.
\end{lemma}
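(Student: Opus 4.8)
The plan is to run a duality argument against the arithmetic mean $\bar\kappa := \tfrac12(\kappa_{\max}+\kappa_{\min})$ of the diffusion coefficients, turning the smallness condition \eqref{SmallDifferenced2d3} into a genuine contraction factor for $S := \sum_{i=1}^m X_i$ in $L^q(Q_T)$. First I would remove the zeroth-order term: since $e^{-Kt}$ is spatially constant, the functions $\widetilde X_i := e^{-Kt}X_i$ are nonnegative, share the same Neumann condition and initial data, and $\widetilde S := \sum_i\widetilde X_i$ satisfies $\partial_t\widetilde S \le \Delta\big(\sum_i\kappa_i\widetilde X_i\big)$; hence it suffices to treat the case $K=0$, at the price of a factor $e^{|K|T}$ in the final constant.

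Next I would set up the dual problem. Writing the inequality as $\partial_t S \le \Delta(AS)$ with $A := \big(\sum_i\kappa_i X_i\big)/S$ (and $A:=\bar\kappa$ on $\{S=0\}$), one has $\kappa_{\min}\le A\le\kappa_{\max}$, hence $|A-\bar\kappa|\le\delta:=\tfrac12(\kappa_{\max}-\kappa_{\min})$. For an arbitrary $0\le\theta\in C_c^\infty(Q_T)$, let $\phi$ solve the backward problem $-\partial_t\phi-\bar\kappa\Delta\phi=\theta$ in $Q_T$, $\partial_\nu\phi=0$ on $\pa\Omega\times(0,T)$, $\phi(\cdot,T)=0$. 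Then $\phi\ge0$ and $\phi$ is smooth; the Duhamel formula $\phi(\cdot,0)=\int_0^T e^{\bar\kappa s\Delta}\theta(\cdot,s)\,ds$ combined with the $L^{q'}$-contractivity of the Neumann heat semigroup gives $\|\phi(\cdot,0)\|_{L^{q'}(\Omega)}\le T^{1/q}\|\theta\|_{L^{q'}(Q_T)}$, while a time reversal and the rescaling $s=\bar\kappa t$ bring the dual equation into the framework of Lemma~\ref{LambertonLemma} and yield the decisive maximal regularity estimate $\|\Delta\phi\|_{L^{q'}(Q_T)}\le \bar\kappa^{-1}C_{q'}^{\MR}\|\theta\|_{L^{q'}(Q_T)}$.

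Then I would pair $S$ with $\theta=-\partial_t\phi-\bar\kappa\Delta\phi$, integrate by parts once in $t$ (using $\phi(\cdot,T)=0$) and twice in $x$ (using $\partial_\nu S=\partial_\nu(AS)=\partial_\nu\phi=0$), and insert $\partial_t S\le\Delta(AS)$ together with $\phi\ge0$, to arrive at
\begin{align*}
\iint_{Q_T}S\,\theta
&\le \int_\Omega S(\cdot,0)\,\phi(\cdot,0) + \iint_{Q_T}(A-\bar\kappa)\,S\,\Delta\phi \\
&\le \Big\|\textstyle\sum_i X_{i,0}\Big\|_{L^q(\Omega)}\|\phi(\cdot,0)\|_{L^{q'}(\Omega)} + \delta\,\|S\|_{L^q(Q_T)}\|\Delta\phi\|_{L^{q'}(Q_T)}\\
&\le \Big(T^{1/q}\Big\|\textstyle\sum_i X_{i,0}\Big\|_{L^q(\Omega)} + \gamma\,\|S\|_{L^q(Q_T)}\Big)\|\theta\|_{L^{q'}(Q_T)},
\end{align*}
where $\gamma := C_{q'}^{\MR}\,\tfrac{\kappa_{\max}-\kappa_{\min}}{\kappa_{\max}+\kappa_{\min}}<1$ by \eqref{SmallDifferenced2d3}. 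Taking the supremum over nonnegative $\theta$ with $\|\theta\|_{L^{q'}(Q_T)}\le1$ (by $L^q$--$L^{q'}$ duality and the density of $C_c^\infty$, using $S\ge0$) gives $\|S\|_{L^q(Q_T)}\le\gamma\|S\|_{L^q(Q_T)}+T^{1/q}\|\sum_i X_{i,0}\|_{L^q(\Omega)}$, hence $\|S\|_{L^q(Q_T)}\le\frac{T^{1/q}}{1-\gamma}\|\sum_i X_{i,0}\|_{L^q(\Omega)}$. Since $0\le X_i\le S$ pointwise, $\sum_{i=1}^m\|X_i\|_{L^q(Q_T)}\le m\|S\|_{L^q(Q_T)}$, and undoing the first step gives \eqref{ImproveEstimate} with $C(T)=m\,e^{|K|T}\,T^{1/q}/(1-\gamma)$, continuous in $T$ and depending only on $T$, $\kappa_{\min}$, $\kappa_{\max}$ (and the fixed data $q,N,\Omega,K,m$).

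The step I expect to be the real obstacle is the bookkeeping of constants: the whole scheme works only because the product of the two constants produced above equals exactly $\delta\,\bar\kappa^{-1}C_{q'}^{\MR}=C_{q'}^{\MR}(\kappa_{\max}-\kappa_{\min})/(\kappa_{\max}+\kappa_{\min})$, so one must choose $\bar\kappa$ to be precisely the arithmetic mean of the $\kappa_i$ and invoke the \emph{optimal} Lamberton constant $C_{q'}^{\MR}$; any suboptimal maximal regularity bound would ruin the contraction. A secondary technical point is that to absorb $\gamma\|S\|_{L^q(Q_T)}$ on the left one needs $\|S\|_{L^q(Q_T)}<\infty$ a priori; this is automatic in the intended application, where the $X_i$ are built from classical solutions and are smooth (hence bounded) on the finite cylinder, but in general one first runs the argument with $S$ replaced by a suitable truncation and then passes to the limit, the bound being independent of the truncation.
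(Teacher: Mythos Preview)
Your proof is correct and follows the same approach as the paper: both reduce to the case $K=0$ via the substitution $\widetilde X_i = e^{-Kt}X_i$, and then invoke the improved duality argument with pivot coefficient $\bar\kappa = (\kappa_{\max}+\kappa_{\min})/2$. The only difference is that the paper's proof simply cites \cite[Lemma~3.9]{einav2020indirect} for the duality step, whereas you have written out that argument in full (the backward dual problem, the Lamberton maximal regularity bound $\|\Delta\phi\|_{L^{q'}}\le \bar\kappa^{-1}C_{q'}^{\MR}\|\theta\|_{L^{q'}}$, and the resulting contraction with factor $\gamma$), so your proposal is in fact a strict expansion of the paper's argument.
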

\begin{proof}
	With the change of variable $\wt{X}_i(x,t) = e^{-Kt}X_i(x,t)$ we have
	\begin{equation*}
	\pa_t\bra{\sum_{i=1}^m\wt{X}_i}\le \Delta\bra{\sum_{i=1}^m \kappa_i\wt{X}_i}.
	\end{equation*}
	This allows us to apply the same arguments in \cite[Lemma 3.9]{einav2020indirect} to obtain, under the condition \eqref{SmallDifferenced2d3},
	\begin{equation*}
		\sum_{i=1}^m\|\wt{X}_i\|_{\LQ{q}} \le C(T)\left\|\sum_{i=1}^m\wt{X}_i(\cdot,0)\right\|_{\LO{q}}
	\end{equation*}
	which implies the desired estimates for $X_i$ in \eqref{ImproveEstimate}. Here we emphasise the continuous dependence of the constant $C$ on $\kappa_{\max}$ and $\kappa_{\min}$ as they are needed afterwards.
\end{proof}

\subsection{A modified energy function} \label{ModEneSection}
By observing  from the equations of $u_2^\epsilon$ and $u_3^\epsilon$ in (\ref{MainProblem}), it is natural to expect the convergence to the critical manifold, i.e.
$$\mathcal M(u^\epsilon):= (k_1 u_1^\epsilon + l_2   u_4^\epsilon)  u_2^\epsilon   -    (k_{2}+l_{1}) u_3^\epsilon     \to 0$$
in a suitable (preferably strong) topology. This turns out to be a subtle issue, especially in the case where the diffusion coefficients $d_2$ and $d_3$ are far away from each other. In the following, we look at the convergence of a perturbed critical manifold
\begin{align}
\widetilde{\mathcal M}_{\alpha}(u^\epsilon):= (k_1u_1^\epsilon + l_2u_4^\epsilon + \alpha(\eps)) u_2^\epsilon - (  k_{2}+l_{1} )  u_3^\epsilon  \to 0,   \label{MtoZero}
\end{align}
where $\alpha = \alpha(\epsilon)>0$ will be chosen later such that  $\lim_{\eps\to 0}\alpha(\eps)= 0$.  Obviously, the strong convergence of $\M(\ue)$ follows from the convergence of $\wt{\M}_\alpha(\ue)$ once $\ue_2$ is bounded in a certain norm. For the sake of simplicity, we  denote by
\begin{equation*}
	A_2  :=  k_1u_1^\epsilon + l_2u_4^\epsilon \quad \text{and}\quad A_3  :=    k_{2}+l_{1}.
\end{equation*}
Note that $A_2$ depends on $\eps>0$ while $A_3$ is a constant independent of $\eps$. For $t>0$, we   define the following function
\begin{align}
\mathcal H^\epsilon(t) :=   \int_\Omega      ((A_2+\alpha)u_2^\epsilon) ^{p-1}  u_2^\epsilon  + \int_\Omega  (A_3u_3^\epsilon) ^{p-1}   u_3^\epsilon =: \mathcal H_2^\epsilon(t) + \mathcal H_3^\epsilon(t), \quad  p>1 . \label{Hdef}
\end{align}
By differentiating this function with respect to the temporal variable, and employing the structure of solutions of   the problem (\ref{MainProblem}), we obtain a-priori estimates in the following lemma, which are crucial to derive the limit (\ref{MtoZero}) and also gradient estimates.

\begin{lemma} \label{TendToManifold} Let $\eps>0$ and $u^\epsilon$ be the solution of (\ref{MainProblem}).
	\begin{itemize}
		\item[(a)] Let $p\in [2,\infty)$ and $\alpha=0$.   Then, there exists  {$C=C(\|u_{0}\|_{L^\infty(\Omega) \times L^p(\Omega)^2 \times L^\infty(\Omega)})>0$}  independent of $\eps>0$ such that 
		\begin{equation}
		\label{EntropyInequalitya}		
		\begin{aligned}
		& \frac{1}{\epsilon}    \iint_{Q_T} \big|   A_2  u_2^\epsilon   -   A_3  u_3^\epsilon \big|^p   +    \sum_{j=2,3}   \iint_{Q_T} A_j^{p-1} (u_j^\epsilon)^{p-2} | \nabla u_j^\epsilon |^2        \\
		&    \hspace*{3cm} \le   C\bigg(1 +  \iint_{Q_T} (u_2^\epsilon)^p \Big[  A_2 ^{p-2}  \partial_t  A_2  +   A_2 ^{p-3}  |  \nabla \hspace*{-0.05cm}  A_2  |^2  \Big]   \bigg).  
		\end{aligned}
		\end{equation}		 
		
		\item[(b)] Let $p\in(1,2]$ and $\alpha=\epsilon^{\frac{1}{4-p}}$.  Then, there exists   {$C=C(\|u_{0}\|_{L^\infty(\Omega) \times L^p(\Omega)^2 \times L^\infty(\Omega)})>0$}  independent of $\eps>0$ such that 
		\begin{equation}
		\label{EntropyInequality}
		\begin{aligned}
		& \frac{1}{\epsilon^{\frac{1}{4-p}}}  \iint_{Q_T}   \frac{\big|   (A_2 + \epsilon^{\frac{1}{4-p}})  u_2^\epsilon   -   A_3  u_3^\epsilon  \big|^2}{\big( (A_2 + \epsilon^{\frac{1}{4-p}})  u_2^\epsilon     +  A_3  u_3^\epsilon\big)^{2-p}} 
		+ \epsilon^{\frac{3-p}{4-p}} \sum_{j=2,3} \iint_{Q_T}   A_j^{p-1} \frac{|\nabla u_j^\epsilon |^2}{(u_j^\epsilon)^{2-p}}      \\
		&  \hspace*{1.8cm} \le C \left(1 +  \iint_{Q_T} (u_2^\epsilon)^p \Big[   (A_2+1)^{p-1}   +          |\partial_t  A_2 | +  |  \nabla \hspace*{-0.05cm}  A_2  |^2  \Big]  \right). 
		\end{aligned}
		\end{equation}
	\end{itemize}

\end{lemma}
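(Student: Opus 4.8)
The plan is to differentiate the modified energy $\mathcal H^\epsilon(t)=\mathcal H_2^\epsilon(t)+\mathcal H_3^\epsilon(t)$ in time (legitimate since, by Theorem~\ref{GloExis}, $u^\epsilon$ is classical for each fixed $\epsilon>0$), substitute the equations of $u_2^\epsilon,u_3^\epsilon$ from \eqref{MainProblem} — and, through $\partial_t A_2=k_1\partial_t u_1^\epsilon+l_2\partial_t u_4^\epsilon$, those of $u_1^\epsilon,u_4^\epsilon$ — and then close the identity by absorbing the ``bad'' contributions into the reaction gain and the gradient dissipation. Writing $B:=A_2+\alpha$ so that $\mathcal H_2^\epsilon=\int_\Omega B^{p-1}(u_2^\epsilon)^p$ and $\mathcal H_3^\epsilon=\int_\Omega A_3^{p-1}(u_3^\epsilon)^p$, the chain rule produces three groups: a reaction group with prefactor $1/\epsilon$, the diffusion groups $p\,d_j^\epsilon\int_\Omega W_j^{p-1}(u_j^\epsilon)^{p-1}\Delta u_j^\epsilon$ (with $W_2=B$, $W_3=A_3$), and the single term $(p-1)\int_\Omega B^{p-2}(\partial_t A_2)(u_2^\epsilon)^p$. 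The key algebraic observation for the reaction group is $B^{p-1}(u_2^\epsilon)^{p-1}=(Bu_2^\epsilon)^{p-1}$ and $A_3^{p-1}(u_3^\epsilon)^{p-1}=(A_3u_3^\epsilon)^{p-1}$; setting $\tilde X:=Bu_2^\epsilon$, $Y:=A_3u_3^\epsilon$, $X:=A_2u_2^\epsilon=\tilde X-\alpha u_2^\epsilon$, this group equals $-\frac{p}{\epsilon}\int_\Omega(\tilde X-Y)(\tilde X^{p-1}-Y^{p-1})-\frac{p\alpha}{\epsilon}\int_\Omega u_2^\epsilon(Y^{p-1}-\tilde X^{p-1})$. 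The first integrand is nonnegative, and by the elementary convexity inequalities $(a-b)(a^{p-1}-b^{p-1})\ge c_p|a-b|^p$ for $p\ge2$, respectively $(a-b)(a^{p-1}-b^{p-1})\ge c_p|a-b|^2(a+b)^{p-2}$ for $1<p\le2$, it bounds from below the first term on the left of \eqref{EntropyInequalitya}, respectively \eqref{EntropyInequality}. In case (a) $\alpha=0$ and there is nothing more; in case (b) the $\alpha$-correction is a genuinely new term that must be absorbed.

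For the diffusion groups, integration by parts on the $u_3^\epsilon$-term ($A_3$ constant) gives only the clean dissipation $-p(p-1)A_3^{p-1}d_3^\epsilon\int_\Omega(u_3^\epsilon)^{p-2}|\nabla u_3^\epsilon|^2$, while on the $u_2^\epsilon$-term it gives $-p(p-1)d_2^\epsilon\int_\Omega B^{p-1}(u_2^\epsilon)^{p-2}|\nabla u_2^\epsilon|^2$ plus a cross term $-p(p-1)d_2^\epsilon\int_\Omega B^{p-2}(u_2^\epsilon)^{p-1}\nabla A_2\cdot\nabla u_2^\epsilon$. In case (a) ($B=A_2$, $p\ge2$), Young's inequality absorbs half of the $u_2^\epsilon$-dissipation and leaves a term bounded by $C\int_\Omega A_2^{p-3}(u_2^\epsilon)^p|\nabla A_2|^2$ for the right-hand side, whereas $(p-1)\int_\Omega A_2^{p-2}(\partial_t A_2)(u_2^\epsilon)^p$ is kept as it stands. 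Integrating in $t$ over $(0,T)$ and using $\mathcal H^\epsilon\ge0$ together with $\mathcal H^\epsilon(0)\le C\|u_0\|_{L^\infty(\Omega)\times L^p(\Omega)^2\times L^\infty(\Omega)}$ (the $L^\infty$-bounds on $u_{10},u_{40}$ control $A_2(0)$, the $L^p$-bounds on $u_{20},u_{30}$ control the rest) then yields \eqref{EntropyInequalitya}.

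Case (b), $1<p\le2$, is the heart of the matter. The obstruction — absent in \cite{desvillettes2015new,brocchieri2021evolution} — is that $A_2=k_1u_1^\epsilon+l_2u_4^\epsilon$ admits no positive pointwise lower bound, so the weights $A_2^{p-2},A_2^{p-3}$ appearing in the entropy dissipation are singular; the shift $\alpha>0$ cures this through $B\ge\alpha$, but only at the price of the prefactors $\alpha^{p-2},\alpha^{p-3}\to+\infty$. I would proceed in three steps: (1) rewrite the $u_2^\epsilon$-cross term as $d_2^\epsilon\int_\Omega(u_2^\epsilon)^p\Delta(B^{p-1})=(p-1)d_2^\epsilon\int_\Omega(u_2^\epsilon)^pB^{p-2}\Delta A_2+(p-1)(p-2)d_2^\epsilon\int_\Omega(u_2^\epsilon)^pB^{p-3}|\nabla A_2|^2$ and simply discard the last term, which now carries the favorable sign $(p-1)(p-2)<0$; (2) recast the residual $\int_\Omega(u_2^\epsilon)^pB^{p-2}\Delta A_2$, via the equations for $u_1^\epsilon,u_4^\epsilon$, in terms of $\partial_t A_2$ and zeroth-order quantities; (3) estimate this residual together with the reaction $\alpha$-correction by Young's inequality — exploiting $B\ge\alpha$ and $|a^{p-1}-b^{p-1}|\le|a-b|^{p-1}$ — against a small fraction of the reaction gain $\frac{1}{\epsilon}\int_\Omega|\tilde X-Y|^2/(\tilde X+Y)^{2-p}$ and of the gradient dissipation. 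The $\epsilon$-power bookkeeping then forces $\alpha=\epsilon^{1/(4-p)}$: with this choice $\alpha^{p-2}=\epsilon^{-(2-p)/(4-p)}$ and $\alpha^{p-3}=\epsilon^{-(3-p)/(4-p)}$ never exceed the $\epsilon^{-1}$-scale of the reaction gain (since $(2-p)/(4-p)\le1$ and $(3-p)/(4-p)\le1$ whenever $p\le2$), so they can be paid for; what remains of the reaction gain, at scale $\epsilon^{-1/(4-p)}=\alpha^{-1}$, and of the gradient dissipation, at scale $\epsilon^{(3-p)/(4-p)}=\alpha^{3-p}$, are exactly the coefficients in \eqref{EntropyInequality}, all other terms being bounded by $C(1+\iint_{Q_T}(u_2^\epsilon)^p[(A_2+1)^{p-1}+|\partial_t A_2|+|\nabla A_2|^2])$. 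Integrating in $t$ and using $\mathcal H^\epsilon_\alpha(0)\le C\|u_0\|_{L^\infty(\Omega)\times L^p(\Omega)^2\times L^\infty(\Omega)}$ (valid since $\alpha(\epsilon)\le1$ for small $\epsilon$) as in case (a) concludes the proof.

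The hard part, as I see it, will be step (3): controlling the simultaneous interaction of the three ``large'' scales — $\epsilon^{-1}$ from the fast reaction, the blown-up weights $\alpha^{p-2},\alpha^{p-3}$ coming from the absence of lower bounds on $u_1^\epsilon,u_4^\epsilon$, and the reciprocals of the dissipation coefficients — under Young's inequality, and verifying that $\alpha=\epsilon^{1/(4-p)}$ (and no other power) makes all of them cancel against the reaction gain while still leaving a quantitatively explicit, non-degenerate remainder on the left-hand side of \eqref{EntropyInequality}.
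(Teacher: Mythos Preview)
Your treatment of part (a) is correct and coincides with the paper's argument. The divergence is in part (b), where you over-engineer the proof and leave genuine loose ends.

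The paper's route is much simpler than your steps (1)--(3). The Young inequality you applied in case (a) to the cross term $-p(p-1)d_2^\epsilon\int B^{p-2}(u_2^\epsilon)^{p-1}\nabla A_2\cdot\nabla u_2^\epsilon$ works \emph{verbatim} for every $p>1$ and every $\alpha\ge 0$: it absorbs half the $u_2^\epsilon$-dissipation and leaves the remainder $\tfrac{p(p-1)d_2^\epsilon}{2}\int(u_2^\epsilon)^p B^{p-3}|\nabla A_2|^2$, with $B=A_2+\alpha$. This yields a single master inequality (the paper's \eqref{EntroEstabA}) valid for all $p>1$ and all $\alpha\ge 0$. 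From there, part (b) is finished in three strokes. First, write $\mathcal M=\widetilde{\mathcal M}_\alpha-\alpha u_2^\epsilon$; the $\alpha$-correction is bounded trivially by dropping its nonpositive part, $\frac{\alpha}{\epsilon}\int u_2^\epsilon(\tilde X^{p-1}-Y^{p-1})\le\frac{\alpha}{\epsilon}\int(u_2^\epsilon)^p B^{p-1}$, so no absorption into the reaction gain is needed. Second, since $p\le 2$, use the pointwise bounds $B^{p-2}\le\alpha^{p-2}$, $B^{p-3}\le\alpha^{p-3}$, and $B^{p-1}\le(A_2+1)^{p-1}$ for $\alpha\le 1$. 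Third, after multiplying \eqref{EntroEstabA} by $\epsilon$, factor out the common bracket $(u_2^\epsilon)^p\big[(A_2+1)^{p-1}+|\partial_t A_2|+|\nabla A_2|^2\big]$, leaving the scalar prefactor $\epsilon+\alpha+\epsilon\alpha^{p-2}+\epsilon\alpha^{p-3}$. With $\alpha=\epsilon^\delta$ this is $\epsilon^{\min\{1,\,\delta,\,1-(2-p)\delta,\,1-(3-p)\delta\}}$ up to constants; maximising the minimum over $\delta\in(0,\tfrac{1}{3-p})$ gives $\delta=1/(4-p)$ and prefactor $\epsilon^{1/(4-p)}$. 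Dividing by $\epsilon^{1/(4-p)}$ produces exactly the coefficients in \eqref{EntropyInequality}.

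Your alternative steps (1)--(3) are unnecessary and have gaps. Rewriting the cross term via $\Delta(B^{p-1})$ and discarding the sign-favourable piece is legitimate, but step (2) --- substituting the equations of $u_1^\epsilon,u_4^\epsilon$ into $\Delta A_2$ --- drags in the zeroth-order reaction terms $k_1u_1^\epsilon u_2^\epsilon$, $l_1u_3^\epsilon$, $l_2u_2^\epsilon u_4^\epsilon$, $k_2u_3^\epsilon$, producing contributions such as $\int(u_2^\epsilon)^{p+1}B^{p-2}A_2$ and $\int(u_2^\epsilon)^p B^{p-2}u_3^\epsilon$ that do not fit the right-hand side of \eqref{EntropyInequality} and for which your step (3) offers no concrete control. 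There is also an internal inconsistency: having discarded the $B^{p-3}$ term in step (1), no $\alpha^{p-3}$ should appear in your $\epsilon$-bookkeeping, yet you invoke it when justifying $\alpha=\epsilon^{1/(4-p)}$. In short, the paper sidesteps all of this by \emph{keeping} the $B^{p-3}|\nabla A_2|^2$ term (harmless once $\alpha>0$) and never absorbing anything back into the reaction gain or the gradient dissipation.
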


\begin{proof}  Let us firstly consider the   term $\mathcal H_2^\epsilon(t)$. The term $\mathcal H_3^\epsilon(t)$ can be treated similarly. By rewriting the equation of $u_2^\epsilon$ in (\ref{MainProblem}) as $\partial_t u_2^\epsilon - d_2^{\epsilon}\Delta u_2^\epsilon = - \mathcal M(u^\epsilon)/\epsilon$, we have, with the help of integration by parts,
	\begin{align}
	\frac{d\mathcal H_2^\epsilon}{dt} 
	&= - \frac{p}{\epsilon} \int_\Omega     \mathcal M (u^\epsilon) ((A_2 +\alpha)u_2^\epsilon)^{p-1}  +  (p-1) \int_\Omega (u_2^\epsilon)^p  (A_2 +\alpha) ^{p-2}  \partial_t  A_2 \nonumber \\
	&\quad + d_2^{\epsilon} p \int_\Omega (u_2^\epsilon)^{p-1}  (A_2 +\alpha) ^{p-1}      \Delta u_2^\epsilon  \nonumber\\
	&= -\frac{p}{\epsilon} \int_\Omega     \mathcal M (u^\epsilon) ((A_2 +\alpha)u_2^\epsilon)^{p-1} + (p-1) \int_\Omega (u_2^\epsilon)^p  (A_2 +\alpha) ^{p-2}  \partial_t  A_2   \nonumber
	\\ 
	&\quad - d_2^{\epsilon}p (p-1)  \int_\Omega   (u_2^\epsilon)^{p-1}  (A_2 +\alpha) ^{p-2}       \nabla  A_2  \nabla u_2^\epsilon      - d_2^{\epsilon}p (p-1) \int_\Omega (u_2^\epsilon)^{p-2}  (A_2 +\alpha) ^{p-1}   | \nabla     u_2^\epsilon |^2. \nonumber 
	\end{align} 
	For the term containing $ \nabla  A_2  \nabla u_2^\epsilon$ the elementary inequality $-xy\le x^2/2+y^2/2$ yields  
	\begin{align}
	\frac{d\mathcal H_2^\epsilon}{dt}  
	\le &  -  \frac{p}{\epsilon} \int_\Omega  \mathcal M (u^\epsilon) ((A_2 +\alpha)u_2^\epsilon)^{p-1}           -   \frac{d_2^{\epsilon}p(p-1)}{2}   \int_\Omega  (u_2^\epsilon)^{p-2} (A_2 +\alpha) ^{p-1}   | \nabla     u_2^\epsilon |^2           \nonumber\\
	&+ (p-1) \int_\Omega (u_2^\epsilon)^p  (A_2 +\alpha) ^{p-2}  \partial_t  A_2  + \frac{d_2^{\epsilon}p(p-1)}{2}  \int_\Omega   (u_2^\epsilon)^{p}  (A_2 +\alpha) ^{p-3}  |  \nabla \hspace*{-0.05cm}  A_2  |^2 .  \nonumber
	\end{align} 
	
	The derivative $d\mathcal H_3^\epsilon/dt$ can be estimated similarly, where we note that $\partial_t A_3$, $\nabla A_3$ are  equal to zero. 
	Thus, 
	by adding $d\mathcal H_2^\epsilon/dt$, $d\mathcal H_3^\epsilon/dt$,  and   integrating the resultant  over $(0,T)$, we obtain  
	\begin{equation} \label{EntroEstabA} 
	\begin{aligned}
	\mathcal H^\epsilon(T) +\,&\, \frac{1}{\epsilon}    \iint_{Q_T} \mathcal M (u^\epsilon) \Big( ( (A_2 +\alpha)  u_2^\epsilon)^{p-1}   - ( A_3  u_3^\epsilon)^{p-1} \Big)       +    \sum_{j=2,3}   \iint_{Q_T}  (u_j^\epsilon)^{p-2} A_j^{p-1}   | \nabla     u_j^\epsilon |^2             \\
	\,&\,     \le C \bigg(  \mathcal H (0) + \iint_{Q_T} (u_2^\epsilon)^p \Big[ (A_2 +\alpha) ^{p-2}  \partial_t  A_2  +      (A_2 +\alpha) ^{p-3}  |  \nabla \hspace*{-0.05cm}  A_2  |^2  \Big]  \bigg) ,  
	\end{aligned}
	\end{equation}
	where  the constant  $C$ depends only on $p,d_2,d_3$. Here  one can easily find a such constant, which does not depend on $\epsilon$, by recalling that $d_j^\epsilon \to d_j$ for $j=2,3$. Moreover, it is clear that the term $\mathcal H (0)$ is bounded under the regularity $u_{10},u_{40}\in L^\infty(\Omega)$ and $u_{20}, u_{30} \in L^p(\Omega)$.
	
	\vspace*{0.15cm}
	
	Let us show part (a). Since  $\alpha=0$ and $p\ge 2$,  we can apply by   the elementary inequality $|x^\lambda-y^\lambda|\ge   |x-y|^{\lambda}$ for all $x,y\ge 0$, $\lambda\ge 1$, which deduces   
	\begin{align}
	\mathcal M (u^\epsilon) \big( (  A_2   u_2^\epsilon)^{p-1}   - ( A_3  u_3^\epsilon)^{p-1} \big) \ge  \big|  A_2  u_2^\epsilon   -  A_3  u_3^\epsilon  \big|^{p} . \label{EntroEstabB}
	\end{align}
	The inequality (\ref{EntropyInequalitya}) follows from combining (\ref{EntroEstabA}), (\ref{EntroEstabB}) with the non-negativity of   $\mathcal H^\epsilon(T)$.
	
	\medskip
	
	For part (b), we plug  $\mathcal M(u^\epsilon)=\widetilde{\mathcal M}(u^\epsilon)-\alpha  u_2^\epsilon$ into (\ref{EntroEstabA}) to get
	\begin{align}
	&    \iint_{Q_T} \widetilde{\mathcal M} (u^\epsilon) \Big( ( (A_2 + \alpha)  u_2^\epsilon)^{p-1}   - ( A_3  u_3^\epsilon)^{p-1} \Big)    + \epsilon \sum_{j=2,3} \iint_{Q_T}  (u_j^\epsilon)^{p-2} A_j^{p-1}   | \nabla     u_j^\epsilon |^2   \nonumber\\
	&  \le C \left( \epsilon \mathcal H ^\epsilon(0)  +    \iint_{Q_T} (u_2^\epsilon)^p \Big[ \alpha  (A_2 + \alpha)^{p-1} +  \epsilon (A_2 + \alpha) ^{p-2}  \partial_t  A_2  +     \epsilon (A_2 + \alpha) ^{p-3}  |  \nabla \hspace*{-0.05cm}  A_2  |^2 \Big] \right) \nonumber\\
	&  \le C \left(\eps \mathcal H ^\epsilon(0)  +    \iint_{Q_T} (u_2^\epsilon)^p \Big[  \alpha  (A_2 + 1)^{p-1} +  \epsilon   \alpha  ^{p-2}  |\partial_t  A_2|  +     \epsilon   \alpha ^{p-3}  |  \nabla \hspace*{-0.05cm}  A_2  |^2 \Big] \right) \nonumber\\ 
	& \le  C \left(1 + \iint_{Q_T} (u_2^\epsilon)^p \Big[   (A_2+1)^{p-1}   +          |\partial_t  A_2 | +  |  \nabla \hspace*{-0.05cm}  A_2  |^2  \Big] \right) (\epsilon + \alpha + \epsilon \alpha^{p-2} + \epsilon \alpha^{p-3} ) ,    
	\nonumber 
	\end{align}
	where the constant  $C$ depends only on $p,d_2,d_3$.  Now, for all $x,y>0$ and $\lambda\in(0,1)$, there   exists  $z_{xy}$ such that  
	$|x^{\lambda-1}-y^{\lambda-1}| = (\lambda-1)z_{xy}^{\lambda-2}|x-y| $ and $\min(x,y)\le z_{xy}\le \max(x,y)$. Since $\lambda-2$ is negative, 
	$$|x^{\lambda-1}-y^{\lambda-1}| \ge  (\lambda-1)(x+y)^{\lambda-2}|x-y| .$$  By applying this inequality, we have  
	\begin{align*}
	\widetilde{\mathcal M} (u^\epsilon) \Big(  (A_2 + \alpha)  u_2^\epsilon   -  A_3  u_3^\epsilon  \Big)  \ge  (p-1)  \left( (A_2 + \alpha)  u_2^\epsilon     +  A_3  u_3^\epsilon\right)^{p-2}  \big| ( A_2 + \alpha)  u_2^\epsilon  -  A_3  u_3^\epsilon \big|^2.
	\end{align*}
	By choosing $ \alpha = \eps^\delta>0$ for $\delta>0$ and noting that
	\begin{align*}
		\max_{0<\delta < 1/(3-p)}  \min\big\{1;\,\delta;\,1-(2-p)\delta;\,1-(3-p)\delta \big\} = 1/(4-p),  
	\end{align*}
	we arrive at the optimal choice $\alpha=\epsilon^{\frac{1}{4-p}}$. This leads to the desired estimate \eqref{EntropyInequality}. 
\end{proof}

\begin{remark} 
		Lemma \ref{TendToManifold} suggests that we need to control terms on the right hand side of \eqref{EntropyInequality} and \eqref{EntropyInequalitya} uniformly in $\eps>0$. This certainly depends on uniform estimates of solutions to \eqref{MainProblem} that we derive. As it turns out, part (a) will be utilised when very good controls of solutions can be obtained, while part (b) is more suitable for having weaker controls of solutions. It is also remarked that the latter does not give us any uniform estimates for the gradients of $\ue_2$ and $\ue_3$.
\end{remark}

%

\subsection{Uniform-in-$\varepsilon$ bounds}

\begin{lemma}  \label{IrRegularityLem} 
   Then there exists $\epsilon_\ast>0$ such that
	\begin{equation}\label{L2plus}
 \sup_{0<\epsilon< \epsilon_\ast} \left(\sum_{j=1}^{4}\|\ue_j\|_{\LQ{2+}}\right) \le { C \left(T,\|u_0\|_{L^{2+}(\Omega)^4} \right)}. 
	\end{equation}
		 Moreover,
	\begin{itemize}
		\item[(a)] If $d_3 - d_2 = 0$, then for any $1<p<\infty$, there exists $\eps_p>0$ such that 
		\begin{equation*}
			\sup_{0<\epsilon< \epsilon_p}\bra{\|\ue_2\|_{\LQ{p}} + \|\ue_3\|_{\LQ{p}}} \leq {C\left(T,\|u_{20}\|_{L^p(\Omega)},\|u_{30}\|_{L^p(\Omega)}\right)}. 
		\end{equation*}
		 
		\item[(b)] If $d_3 - d_2 \not = 0$, then for any $2<p<\infty$, if $d_2$ and $d_3$ satisfy 	\begin{equation}\label{closeness_d2d3}
			C_{p'}^{\MR}\frac{|d_2 - d_3 |}{d_2 +d_3 } < 1,
		\end{equation}
		where $p' = p/(p-1)$ is the conjugate H\"older exponent of $p$, then 
there exists $\eps_p>0$ such that 		\begin{equation}\label{Lpplus}
			\sup_{0<\eps< \epsilon_p}\left(\|\ue_2\|_{\LQ{p}}+\|\ue_3\|_{\LQ{p}}\right) \le {C\left(T,\|u_{20}\|_{L^p(\Omega)},\|u_{30}\|_{L^p(\Omega)}\right)}. 
		\end{equation}
	\end{itemize} 
\end{lemma}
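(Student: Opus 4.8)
The plan is to split the statement into the unconditional $L^{2+}$-bound and the conditional higher-integrability bounds in (a) and (b), all of which rest on the improved duality estimate (Lemma~\ref{ImprovedDuaEst}) applied to the sum $\ue_2+\ue_3$, followed by bootstrapping via the heat regularisation (Lemma~\ref{lem:heat_regularisation}) for $\ue_1$ and $\ue_4$. First, I would observe that adding the second and third equations of \eqref{MainProblem} kills the singular $1/\eps$ reaction term, leaving
\begin{equation*}
	\pa_t(\ue_2+\ue_3) = d_2^\eps \Delta \ue_2 + d_3^\eps\Delta\ue_3 = \Delta(d_2^\eps \ue_2 + d_3^\eps \ue_3),
\end{equation*}
which is exactly the structure \eqref{CrossDiffProblem} of Lemma~\ref{ImprovedDuaEst} with $m=2$, $X_1=\ue_2$, $X_2=\ue_3$, $\kappa_1=d_2^\eps$, $\kappa_2=d_3^\eps$ and $K=0$. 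For the unconditional $L^{2+}$ bound I would use that Lemma~\ref{ImprovedDuaEst} holds whenever $C_{q'}^{\MR}\,|\kappa_{\max}-\kappa_{\min}|/(\kappa_{\max}+\kappa_{\min})<1$; since $C_{2}^{\MR}<1$ (the $L^2$ duality case is classical, with $C_2^{\MR}=1$ being the borderline and a strict inequality available for $q$ slightly above $2$ by continuity of $q\mapsto C_q^{\MR}$), and since $d_j^\eps\to d_j\in(0,\infty)$ so that the ratio $|d_2^\eps-d_3^\eps|/(d_2^\eps+d_3^\eps)$ stays bounded, there is $q=2+\gamma$ and $\eps_\ast>0$ for which the smallness condition holds uniformly in $\eps<\eps_\ast$; this gives $\|\ue_2\|_{\LQ{2+}}+\|\ue_3\|_{\LQ{2+}}\le C\|u_{20}+u_{30}\|_{\LO{2+}}$. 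Then, using these as forcing terms of at most the same integrability in the equations for $\ue_1$ and $\ue_4$ (whose nonlinearities $-k_1\ue_1\ue_2+l_1\ue_3$ and $-l_2\ue_2\ue_4+k_2\ue_3$ are controlled once $\ue_1,\ue_4$ are bounded, which follows from the comparison/duality-type $L^\infty$ control already available through Theorem~\ref{GloExis} and mass control), I would apply Lemma~\ref{lem:heat_regularisation} with $q=2+\gamma$ to propagate the bound to $\ue_1,\ue_4$, thereby obtaining \eqref{L2plus}. The $\eps$-uniformity is the point to be careful about: the constant $C_1$ in Lemma~\ref{lem:heat_regularisation} involves $1/d_j^\eps$, which is harmless since $d_j^\eps$ is bounded away from $0$ by \eqref{AssumpOnDj}.

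For part~(a), when $d_2=d_3$ the ratio $|d_2^\eps-d_3^\eps|/(d_2^\eps+d_3^\eps)\to 0$ as $\eps\to0$, so for \emph{any} fixed $q=p'\in(1,\infty)$ the smallness condition \eqref{SmallDifferenced2d3} is satisfied once $\eps$ is small enough (depending on $p$), and Lemma~\ref{ImprovedDuaEst} directly yields $\|\ue_2\|_{\LQ{p}}+\|\ue_3\|_{\LQ{p}}\le C(T)\|u_{20}+u_{30}\|_{\LO{p}}$, which is the claim since $u_{20},u_{30}\ge 0$. For part~(b), when $d_2\ne d_3$ the ratio no longer vanishes, but it converges to the fixed positive number $|d_2-d_3|/(d_2+d_3)$; under the standing hypothesis \eqref{closeness_d2d3} that $C_{p'}^{\MR}|d_2-d_3|/(d_2+d_3)<1$, continuity gives, for all small $\eps$, $C_{p'}^{\MR}|d_2^\eps-d_3^\eps|/(d_2^\eps+d_3^\eps)<1$ as well, so Lemma~\ref{ImprovedDuaEst} applies and delivers \eqref{Lpplus}; the restriction $p>2$ here is only because \eqref{closeness_d2d3} with $C_{p'}^{\MR}<1$ is nontrivial precisely in that regime ($C_{p'}^{\MR}\le 1$ automatically when $p'\ge 2$, i.e.\ $p\le 2$).

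The main obstacle I expect is establishing the $\eps$-uniformity of all constants cleanly: one must check that the constant $C(T)$ coming out of Lemma~\ref{ImprovedDuaEst} depends only on $T$ and on $\kappa_{\min},\kappa_{\max}$ (which Lemma~\ref{ImprovedDuaEst} is stated to guarantee, via its continuous dependence on $\kappa_{\min},\kappa_{\max}$), and that, since $d_j^\eps\to d_j\in(0,\infty)$, these quantities $\kappa_{\min}^\eps=\min\{d_2^\eps,d_3^\eps\}$ and $\kappa_{\max}^\eps=\max\{d_2^\eps,d_3^\eps\}$ lie in a fixed compact subinterval of $(0,\infty)$ for all small $\eps$; then the supremum of $C(T,\kappa_{\min}^\eps,\kappa_{\max}^\eps)$ over small $\eps$ is finite. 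A secondary technical point is that the improved duality lemma is stated for \emph{smooth} nonnegative solutions, whereas $\ue$ is merely a classical solution in the sense of the Definition preceding Theorem~\ref{GloExis}; since $\ue_j\in C^{2,1}(\overline\Omega\times(0,T))$ for positive times and is continuous up to $t=0$ in $L^p$, one applies Lemma~\ref{ImprovedDuaEst} on $(\tau,T)$ and lets $\tau\to0^+$, using the $L^p$-continuity of the initial trace to recover the bound on $Q_T$ with the stated initial-data constant. Modulo these uniformity bookkeeping steps, the proof is a direct application of the two lemmas of Section~\ref{HeatReguSubsec}.
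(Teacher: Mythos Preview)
Your treatment of parts~(a) and~(b) is correct and matches the paper's proof: summing the equations for $\ue_2$ and $\ue_3$ kills the $1/\eps$ term and leaves exactly the structure of Lemma~\ref{ImprovedDuaEst} with $m=2$, and the smallness condition \eqref{SmallDifferenced2d3} is then verified for the $\eps$-dependent coefficients $d_2^\eps,d_3^\eps$ by the convergence \eqref{AssumpOnDj}.

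For the unconditional $L^{2+}$ bound \eqref{L2plus}, however, your argument has a gap and the paper proceeds differently. You first obtain $\ue_2,\ue_3\in L^{2+}(Q_T)$ uniformly via duality on the two-component sum, and then try to bootstrap to $\ue_1,\ue_4$ by controlling the full nonlinearities $-k_1\ue_1\ue_2+l_1\ue_3$ and $-l_2\ue_2\ue_4+k_2\ue_3$ in $L^{2+}$. For this you need a bound on the products $\ue_1\ue_2$ and $\ue_2\ue_4$, and you claim that the required control of $\ue_1,\ue_4$ ``follows from the comparison/duality-type $L^\infty$ control already available through Theorem~\ref{GloExis} and mass control''. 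But Theorem~\ref{GloExis} only gives global existence and boundedness for each \emph{fixed} $\eps$; the $L^\infty$ bound coming from \cite{fellner2020global} may blow up as $\eps\to 0$, so this step is circular. The bootstrap \emph{can} be repaired by exploiting the sign structure, i.e.\ $\pa_t\ue_1-\de_1\Delta\ue_1\le l_1\ue_3$ and the comparison principle (this is precisely what the paper does later in Lemma~\ref{lem:Estimates_u1u4}), but you did not invoke that mechanism, and that route also demands $W^{2,2+}$ regularity on $u_{10},u_{40}$ through Lemma~\ref{lem:heat_regularisation}, whereas the stated constant depends only on $\|u_0\|_{\LO{2+}^4}$.

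The paper obtains \eqref{L2plus} in one stroke by summing \emph{all four} equations of \eqref{MainProblem}:
\begin{equation*}
	\pa_t\sum_{j=1}^4\ue_j - \Delta\Big(\sum_{j=1}^4 d_j^\eps\ue_j\Big) \le (l_1+k_2)\sum_{j=1}^4\ue_j,
\end{equation*}
which again fits Lemma~\ref{ImprovedDuaEst}, now with $m=4$, $\kappa_i = d_i^\eps$ and $K=l_1+k_2$. The smallness condition \eqref{SmallDifferenced2d3} for some exponent $q=2+\gamma>2$ holds unconditionally (since the ratio $|\kappa_{\max}-\kappa_{\min}|/(\kappa_{\max}+\kappa_{\min})$ is always strictly less than~$1$ and $C_{q'}^{\MR}\to 1$ as $q'\to 2$; this is \cite[Lemma~3.19]{einav2020indirect}), and continuity in $\eps$ gives a uniform $\eps_\ast$. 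This yields $L^{2+}$ bounds on all four components simultaneously, with the constant depending only on $\|u_0\|_{\LO{2+}^4}$, and avoids any bootstrap.
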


\begin{proof}
	Adding the equations in \eqref{MainProblem} leads to, thanks to the non-negativity of $\ue_j$, $j=1,\ldots, 4$,
	\begin{equation*}
		\left\{
		\begin{aligned}
			\partial_t \sum_{j=1}^4\ue_j - \Delta\bra{\sum_{j=1}^4d_j^{\epsilon}\ue_j} &\le (l_1+k_2)\sum_{j=1}^4\ue_j &&\text{ in } Q_T,\\
			\partial_{\nu}\ue_j &= 0, &&\text{ on } \Gamma_T,\\
			\ue_j(0) &= u_{j,0} &&\text{ in } \Omega.
		\end{aligned}
		\right.
	\end{equation*}
  According to \cite[Lemma 3.19]{einav2020indirect}, there exists a constant $1<p_\ast<2$ such that 
	\begin{equation*}
		C_{p_\ast}^{\MR}\frac{|\max_{j}d_j - \min_{j}d_j|}{\max_{j}d_j+\min_{j}d_j} < 1.
	\end{equation*}
Since $d_j^\epsilon \to d_j$ as $\epsilon\to 0^+$ for $j=1,\dots,4$, the exists $\epsilon_\ast>0$ such that 	
\begin{equation*}
	C_{p_\ast}^{\MR}\frac{|\max_{j}d_j^\epsilon - \min_{j}d_j^\epsilon|}{\max_{j}d_j^\epsilon+\min_{j}d_j^\epsilon}  < 1,  
	\end{equation*}
	for all $0<\epsilon< \epsilon_\ast$. 
Lemma \ref{ImprovedDuaEst} therefore yields that there exists $C_{\max_{j}d_j^\epsilon,\,\min_{j}d_j^\epsilon}(\|u_0\|_{L^{2+}(\Omega)^4})$ depending on $T,l_1,k_2,\Omega$ and  continuously depending on $\max_{j}d_j^\epsilon$,	$\min_{j}d_j^\epsilon$  such that 
\begin{equation} \nonumber  \sup_{0<\epsilon< \epsilon_\ast} \bigg(  \sum_{j=1}^{4}\|\ue_j\|_{\LQ{2+}} \bigg) \le   \left( \sup_{0<\epsilon< \epsilon_\ast} C_{\max_{j}d_j^\epsilon,\,\min_{j}d_j^\epsilon}  \right) \le    C(  \max_{j}d_j ,\,\min_{j}d_j ) , 
	\end{equation}	
which shows  \eqref{L2plus}. 

\medskip
In the case where $d_3 - d_2 = 0$, it follows from \eqref{AssumpOnDj} that the fraction $ |d_3^{\epsilon}-d_2^{\epsilon}|/(d_3^{\epsilon}+d_2^{\epsilon}) \to 0$. Therefore, for any $p\in (1,\infty)$, there exists $\eps_p>0$ such that
	\begin{equation}\label{eq1}
		C_{p'}^{\MR}\frac{|d_3^{\epsilon}-d_2^{\epsilon}|}{d_3^{\epsilon}+d_2^{\epsilon}} < 1,  
	\end{equation}
for all $0<\epsilon< \epsilon_p$. Consequently, Lemma \ref{ImprovedDuaEst} entails the uniform w.r.t. $\eps\in (0,\eps_p)$ bounds in $\LQ{p}$ of $\ue_2$ and $\ue_3$. 

\medskip
In the case $d_3 - d_2 \not = 0$, if \eqref{closeness_d2d3} is fulfilled, then due to \eqref{AssumpOnDj}, we can find $\eps_p > 0$ such that \eqref{eq1} holds. The estimate \eqref{Lpplus} is then a direct consequence of Lemma \ref{ImprovedDuaEst}.
\end{proof}
Without further assumptions, the improve duality method only gives us $\LQ{2+}$-estimates for $\ue_1$ and $\ue_4$. To obtain better estimates for these two functions, we utilise their equations  and the heat regularisation.
\begin{lemma}\label{lem:Estimates_u1u4}
	Let $\ue$ be the solution to \eqref{MainProblem}. We have the following uniform-in-$\eps$ bounds
	\begin{equation} \label{e1}
	\begin{aligned}
	& \sup_{0<\epsilon< \epsilon_\ast}  	\sum_{j\in\{1;4\}}\bra{\|\ue_j\|_{\LQ{\frac{2(N+2)}{N-2}+}}+\|\na \ue_j\|_{\LQ{2}} + \|\partial_t\ue_j\|_{\LQ{1+}} }   \\
	& \hspace*{2cm}   \le {C\left(T,\|u_{0}\|_{W^{2,2+}(\Omega) \times L^{2+}(\Omega)^2 \times W^{2,2+}(\Omega)}\right)},
	\end{aligned}
	\end{equation}
	where we use the convention $\frac{2(N+2)}{N-2}+ = +\infty$ if $N\le 2$.
\end{lemma}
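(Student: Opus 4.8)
The plan is to upgrade the $\LQ{2+}$ bounds of Lemma~\ref{IrRegularityLem} for $\ue_1$ and $\ue_4$ by exploiting the dissipative sign of the nonlinear terms in their equations, combined with the comparison principle, the heat regularisation of Lemma~\ref{lem:heat_regularisation}, and an elementary $L^2$ energy estimate; everything is kept uniform in $\eps$ by \eqref{AssumpOnDj} (so $d_j^\eps\to d_j>0$ and $1/d_j^\eps$ stays bounded) and by Lemma~\ref{IrRegularityLem}.

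\textbf{Step 1 ($L^p$ bound).} Write the first equation of \eqref{MainProblem} as $\pa_t\ue_1 - d_1^\eps\Delta\ue_1 + k_1\ue_2\ue_1 = l_1\ue_3 \ge 0$, a linear equation for $\ue_1$ with nonnegative potential $k_1\ue_2$. Let $w_1$ be the solution of $(\H_{d_1^\eps}(u_{10},l_1\ue_3))$; since $u_{10},\ue_3\ge 0$ we have $w_1\ge 0$, hence $\pa_t w_1 - d_1^\eps\Delta w_1 + k_1\ue_2 w_1 = l_1\ue_3 + k_1\ue_2 w_1 \ge l_1\ue_3$, so $w_1$ is a supersolution of the equation for $\ue_1$ with the same operator $\pa_t - d_1^\eps\Delta + k_1\ue_2$ and the same data, and the parabolic comparison principle gives $0\le\ue_1\le w_1$ in $Q_T$. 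Since $\|\ue_3\|_{\LQ{2+}}\le C$ uniformly for $0<\eps<\eps_\ast$ by Lemma~\ref{IrRegularityLem} and $u_{10}\in W^{2,2+}(\Omega)$, Lemma~\ref{lem:heat_regularisation} applied to $w_1$ with $q=2+\delta$ for $\delta>0$ small (the constants there stay bounded as $\eps\to 0$ because $d_1^\eps\to d_1>0$) gives $w_1$, hence $\ue_1$, bounded in $\LQ{p}$ with $p=\frac{(N+2)(2+\delta)}{N-2-2\delta}>\frac{2(N+2)}{N-2}$ when $N\ge 3$, and in $\LQ{\infty}$ when $N\le 2$ (then $q>(N+2)/2$), matching the stated convention. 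The fourth equation, rewritten as $\pa_t\ue_4 - d_4^\eps\Delta\ue_4 + l_2\ue_2\ue_4 = k_2\ue_3\ge 0$, is handled identically.

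\textbf{Step 2 (gradient bound).} Multiplying the first equation of \eqref{MainProblem} by $\ue_1$, integrating over $\Omega$ (using $\pa_\nu\ue_1=0$), discarding $-k_1\int_\Omega(\ue_1)^2\ue_2\le 0$, and applying Young's inequality give
\begin{equation*}
\frac{d}{dt}\,\frac{1}{2}\|\ue_1(t)\|_{L^2(\Omega)}^2 + d_1^\eps\|\na\ue_1(t)\|_{L^2(\Omega)}^2 \le \frac{l_1}{2}\|\ue_3(t)\|_{L^2(\Omega)}^2 + \frac{l_1}{2}\|\ue_1(t)\|_{L^2(\Omega)}^2 .
\end{equation*}
Integrating in time, using $\|\ue_1\|_{\LQ{2}}+\|\ue_3\|_{\LQ{2}}\le C$ from Lemma~\ref{IrRegularityLem} together with $d_1^\eps\ge d_1/2$ for $\eps$ small, yields $\|\na\ue_1\|_{\LQ{2}}\le C$ uniformly in $\eps$; the same computation for $\ue_4$, discarding $-l_2\int_\Omega(\ue_4)^2\ue_2\le 0$, gives $\|\na\ue_4\|_{\LQ{2}}\le C$. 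Finally, by Step~1 and Lemma~\ref{IrRegularityLem}, $\ue_1\in\LQ{\frac{2(N+2)}{N-2}+}$ and $\ue_2\in\LQ{2+}$, so H\"older's inequality gives $\ue_1\ue_2\in\LQ{m}$ with $\frac{1}{m}<\frac{N-2}{2(N+2)}+\frac{1}{2}=\frac{N}{N+2}<1$ if $N\ge 3$ (and $m>2$ if $N\le 2$); hence $f_1:=-k_1\ue_1\ue_2+l_1\ue_3\in\LQ{m_1}$ for some $m_1>1$, uniformly in $\eps$. Since $u_{10}\in W^{2,2+}(\Omega)\hookrightarrow W^{2,m_1}(\Omega)$, applying Lemma~\ref{lem:heat_regularisation} to the equation of $\ue_1$ with $q=m_1$ (again with $\eps$-uniform constant) gives $\|\pa_t\ue_1\|_{\LQ{m_1}}+\|\Delta\ue_1\|_{\LQ{m_1}}\le C$, in particular $\|\pa_t\ue_1\|_{\LQ{1+}}\le C$, and likewise for $\ue_4$. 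Combining the three estimates gives \eqref{e1}.

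\textbf{Main obstacle.} The genuine difficulty is that, without extra smallness of $|d_2-d_3|$, the factor $\ue_2$ is only controlled in $\LQ{2+}$, so the source $\ue_1\ue_2$ in the $\ue_1$-equation is a priori only in $\LQ{1+}$ and a naive heat-regularisation bootstrap from $\LQ{2+}$ is blocked. The decisive move is therefore the comparison step of Step~1, which dominates $\ue_1$ by the solution of a plain heat equation with the good source $l_1\ue_3\in\LQ{2+}$, taking advantage of the favourable sign of $-k_1\ue_1\ue_2$; and the $\LQ{2}$ gradient bound — which the heat regularisation alone does not deliver in high dimensions, since the exponent it produces, $\tfrac{N+2}{N-1}$, is below $2$ for $N\ge 5$ — must be recovered separately from the $L^2$ energy estimate, whose key cancellation is again the dissipativity of the reaction.
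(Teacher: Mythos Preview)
Your proof is correct and follows essentially the same approach as the paper: the comparison argument with the heat equation driven by $l_1\ue_3\in\LQ{2+}$ to obtain the $\LQ{\frac{2(N+2)}{N-2}+}$ bound, the $L^2$ energy identity (multiplication by $\ue_j$ and dropping the sign-favourable quadratic term) for the gradient bound, and heat regularisation on the full right-hand side for $\pa_t\ue_j\in\LQ{1+}$. The only cosmetic differences are the order of the three steps and that the paper bounds $\iint_{Q_T}\ue_1\ue_3$ directly rather than via Young's inequality.
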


\begin{proof}  Since the nonlinearities   $f_1(\ue) = -k_1\ue_1\ue_2 + l_1\ue_3$ and $f_4(\ue)=  -l_2 u_2^\epsilon  u_4^\epsilon  +  k_{2} u_3^\epsilon$ are quadratic, by Lemma \ref{IrRegularityLem} we have 
	\begin{equation}
	\sup_{0<\epsilon< \epsilon_\ast} \bigg(  \sum_{j\in\{1;4\}} \|f_j(\ue)\|_{\LQ{1+}}  \bigg) \le {C(T,\|u_0\|_{L^{2+}(\Omega)^4})} . \label{e1a}
	\end{equation} 	
Hence, from the equations 	\begin{equation*}
		\partial_t \ue_j - d_j^{\epsilon}\Delta \ue_j = f_j(\ue), \quad j=1,4,   
	\end{equation*}	
	we can now use the maximal regularity in Lemma \ref{lem:heat_regularisation} to get the bound on the derivative
	\begin{align}
	\sup_{0<\epsilon< \epsilon_\ast}    \sum_{j\in\{1;4\}} \|\partial_t\ue_j\|_{\LQ{1+}}  \le {C\left(T,\|u_{0}\|_{W^{2,2+}(\Omega) \times L^{2+}(\Omega)^2 \times W^{2,2+}(\Omega)}\right)}, \label{e1b}
	\end{align}  
where we have used (\ref{e1a}) and   $d_j^\epsilon \to d_j$ as $\epsilon\to 0^+$. Note that a direct application of Lemma \ref{lem:heat_regularisation} with the right hand side $f_j(\ue)\in \LQ{1+}$ does not give the uniform estimates for $\ue_j$ and $\na \ue_j$ as in \eqref{e1}, especially in high dimensions. We will utilise the non-negativity of $\ue$. Indeed, from
	\begin{equation*}
		\partial_t \ue_1 - d_1^{\epsilon}\Delta \ue_1 = -k_1\ue_1\ue_2+l_1u_3\le l_1\ue_3,
	\end{equation*}
	we see that $0\le \ue_1 \le w^\epsilon$ where $w^\epsilon$ is the solution to
	\begin{equation*}
		\partial_t w^\epsilon- d_1^{\epsilon}\Delta w^\epsilon = l_1\ue_3, \quad \partial_{\nu}w^\epsilon = 0, \quad w^\epsilon(x,0) = u_{10}(x) 
	\end{equation*}
in which $u_3^\epsilon$ is uniformly bounded in $L^{2+}(Q_T)$ w.r.t. $\epsilon$. Now we can apply Lemma \ref{lem:heat_regularisation} to the equation of $w^\epsilon$, and the comparison principle to obtain 
	\begin{equation*}
		\|\ue_1\|_{\LQ{\frac{2(N+2)}{N-2}+}} \le  \|w^\epsilon \|_{\LQ{\frac{2(N+2)}{N-2}+}} \le {C\left(T,\|u_{10}\|_{W^{2,2+}(\Omega)}\right)} .
	\end{equation*}
	The estimate for $\ue_4$ can be shown in the same way. Concerning the gradient estimate, we multiply the equation of $\ue_1$ by $\ue_1$ then integrate on $Q_T$ to get
	\begin{equation*}
		\frac 12\|\ue_1(T)\|_{\LO{2}}^2 + d_1^{\epsilon}\iint_{Q_T}|\na \ue_1|^2dxdt \le \frac 12\|u_{10}\|_{\LO{2}}^2 + l_1\iint_{Q_T}\ue_1\ue_3dxdt.
	\end{equation*}
	Thanks to the $\LQ{2+}$-bounds of $\ue_1$, $\ue_3$, and \eqref{AssumpOnDj} we get the gradient estimates of $\ue_1$. The estimate of $\na\ue_4$ follows in the same way, so we omit it.
\end{proof}

\begin{lemma} \label{IrRegularityLemU1}  Let $q $  and $\epsilon_*$ be defined by Lemma \ref{IrRegularityLem}. Then, for  all $ \sigma \in [0,1)$, 
	\begin{align}
		\sup_{0<\epsilon<\epsilon_*} \bigg(\sum_{j=1,4} d_j^{\epsilon} \iint_{Q_T} \frac{|\na \ue_j|^{2}}{(\ue_j)^{1+\sigma}}  +   \sum_{j=1,4}\iint_{Q_T}  \frac{ u_3^\epsilon}{(u_j^\epsilon)^{\sigma}}  \bigg) \le {C\left(T,\|u_0\|_{L^{2+}(\Omega)^4} \right)} . \label{EnerEstu1u4}
	\end{align}
\end{lemma}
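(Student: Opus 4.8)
The plan is to test the equations for $\ue_1$ and $\ue_4$ against a suitable negative power of the unknown. Concretely, for $j\in\{1,4\}$ and a small parameter $\eta>0$ (introduced so that we divide by a strictly positive quantity), I would multiply the equation $\pa_t\ue_j - \de_j\Delta\ue_j = f_j(\ue)$ by $(\ue_j+\eta)^{-\sigma}$ and integrate over $Q_T$. For the time-derivative term, since $\sigma\in[0,1)$, the primitive $\Phi_\sigma(s) = \int_0^s(r+\eta)^{-\sigma}\,dr$ is well-defined and bounded on bounded sets, so $\iint_{Q_T}\pa_t\ue_j\,(\ue_j+\eta)^{-\sigma} = \int_\Omega \Phi_\sigma(\ue_j(T)) - \int_\Omega\Phi_\sigma(u_{j0})$, which is controlled by $\|u_0\|_{L^{2+}}$ (indeed by $\|u_0\|_{L^{1}}$ up to constants). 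For the diffusion term, integration by parts gives
\begin{equation*}
	-\de_j\iint_{Q_T}\Delta\ue_j\,(\ue_j+\eta)^{-\sigma} = \sigma\,\de_j\iint_{Q_T}\frac{|\na\ue_j|^2}{(\ue_j+\eta)^{1+\sigma}} \ge 0,
\end{equation*}
using the homogeneous Neumann boundary condition; this is exactly the good gradient term we want, modulo the harmless shift $\eta$.

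Next I would handle the reaction term. For $j=1$ we have $f_1(\ue) = -k_1\ue_1\ue_2 + l_1\ue_3$, so
\begin{equation*}
	\iint_{Q_T} f_1(\ue)\,(\ue_1+\eta)^{-\sigma} = -k_1\iint_{Q_T}\frac{\ue_1\ue_2}{(\ue_1+\eta)^\sigma} + l_1\iint_{Q_T}\frac{\ue_3}{(\ue_1+\eta)^\sigma}.
\end{equation*}
The first term is $\le 0$ (all quantities nonnegative) and can be discarded; the second term is precisely $+l_1$ times the term $\iint_{Q_T} \ue_3/(\ue_1)^\sigma$ appearing on the left of \eqref{EnerEstu1u4} (again up to the shift $\eta$). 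So, moving it to the left-hand side and moving the gradient term there too, we obtain, for each fixed $\eta>0$,
\begin{equation*}
	\sigma\,\de_1\iint_{Q_T}\frac{|\na\ue_1|^2}{(\ue_1+\eta)^{1+\sigma}} + l_1\iint_{Q_T}\frac{\ue_3}{(\ue_1+\eta)^\sigma} \le \int_\Omega\Phi_\sigma(\ue_1(T)) + \int_\Omega\Phi_\sigma(u_{10}) \le C\big(T,\|u_0\|_{L^{2+}(\Omega)^4}\big),
\end{equation*}
using the uniform $L^{2+}(Q_T)$ bound on $\ue_3$ from Lemma \ref{IrRegularityLem} (which in turn bounds $\int_\Omega\Phi_\sigma(\ue_3(T))$ through $\|\ue_3\|_{L^\infty(0,T;L^1)}\le\|\ue_3\|_{L^{2+}(Q_T)}$ after a Gronwall-type argument on $\sum_j\|\ue_j\|_{L^1}$, or simply from mass control). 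The analogous computation for $j=4$ with $f_4(\ue) = -l_2\ue_2\ue_4 + k_2\ue_3$ works identically: the $-l_2\ue_2\ue_4$ term has a good sign and $k_2\iint_{Q_T}\ue_3(\ue_4+\eta)^{-\sigma}$ is the term we keep. Summing over $j\in\{1,4\}$ and then letting $\eta\to 0^+$ — by Fatou on the two left-hand integrals, which are monotone increasing as $\eta\downarrow 0$ for the gradient term, and by monotone/dominated convergence for $\ue_3/(\ue_j+\eta)^\sigma$ — yields \eqref{EnerEstu1u4}; the constant is uniform in $\eps$ because $\de_j\to d_j\in(0,\infty)$ by \eqref{AssumpOnDj}, so $\de_j$ is bounded below uniformly in $\eps$ for small $\eps$, and all other constants ($k_1,l_1,l_2,k_2,T$) are $\eps$-independent.

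The main technical point to be careful about — rather than a genuine obstacle — is the justification of the integration by parts and of the limit $\eta\to0^+$: one must check that $(\ue_j+\eta)^{-\sigma}$ is an admissible test function for the (classical, hence smooth for $t>0$) solution $\ue_j$, handle the behaviour near $t=0$ where $\ue_j$ is only $W^{2,q}$, and verify that passing $\eta\to0^+$ does not lose mass in the term $\iint_{Q_T}\ue_3(\ue_j)^{-\sigma}$ on a set where $\ue_j$ vanishes — here the point is that we only need a lower bound (Fatou) for the left side and we already have an $\eta$-independent upper bound for the right side, so the limit is automatic. A secondary point is making the dependence of the bound explicitly on $\|u_0\|_{L^{2+}(\Omega)^4}$ rather than $L^\infty$; this follows since everything is ultimately controlled by $\|\ue_3\|_{L^{2+}(Q_T)}$, which Lemma \ref{IrRegularityLem} bounds in terms of $\|u_0\|_{L^{2+}(\Omega)^4}$, together with the elementary bound $\Phi_\sigma(s)\le \frac{s^{1-\sigma}}{1-\sigma}\le C(1+s)$.
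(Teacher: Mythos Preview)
Your overall strategy---test the equation for $u_j^\epsilon$ against $(u_j^\epsilon+\eta)^{-\sigma}$ and pass $\eta\to 0^+$---is exactly what the paper does. But there is a sign error that creates a real gap. When you integrate by parts,
\[
-\de_j\iint_{Q_T}\Delta\ue_j\,(\ue_j+\eta)^{-\sigma}
= \de_j\iint_{Q_T}\nabla\ue_j\cdot\nabla\big((\ue_j+\eta)^{-\sigma}\big)
= -\,\sigma\,\de_j\iint_{Q_T}\frac{|\nabla\ue_j|^2}{(\ue_j+\eta)^{1+\sigma}},
\]
which is \emph{negative}, not $\ge 0$ as you wrote. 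With the correct sign, the identity for $j=1$ rearranges to
\[
\sigma\,\de_1\iint_{Q_T}\frac{|\nabla\ue_1|^2}{(\ue_1+\eta)^{1+\sigma}} + l_1\iint_{Q_T}\frac{\ue_3}{(\ue_1+\eta)^\sigma}
= \int_\Omega\Phi_\sigma(\ue_1(T)) - \int_\Omega\Phi_\sigma(u_{10}) + k_1\iint_{Q_T}\frac{\ue_1\ue_2}{(\ue_1+\eta)^\sigma}.
\]
So the term $k_1\iint \ue_1\ue_2(\ue_1+\eta)^{-\sigma}$ appears on the right with a \emph{positive} sign and cannot be discarded; it has to be estimated. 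This is precisely the step the paper carries out: use $\ue_1(\ue_1+\eta)^{-\sigma}\le (\ue_1+\eta)^{1-\sigma}\le \ue_1+\eta+1$ and then bound $\iint_{Q_T}(\ue_1+2)\ue_2$ by $\|\ue_1+2\|_{L^2(Q_T)}\|\ue_2\|_{L^2(Q_T)}$, which is controlled by Lemma~\ref{IrRegularityLem}. Once you add this estimate (and similarly $l_2\iint\ue_4\ue_2(\ue_4+\eta)^{-\sigma}$ for $j=4$), your argument goes through.

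A minor remark: your parenthetical justification for bounding $\int_\Omega\Phi_\sigma(\ue_1(T))$ is slightly garbled (you mention $\Phi_\sigma(\ue_3(T))$), but the idea you indicate---mass control gives a uniform $L^\infty(0,T;L^1(\Omega))$ bound on $\ue_1$, then H\"older yields $\int_\Omega(\ue_1(T))^{1-\sigma}\le C$---is correct and is what the paper uses.
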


\begin{proof} 
	For $\sigma \in[0,1)$ and $\delta>0$, we define 
	\begin{align}
		\mathcal E^{\delta,\epsilon}(t) := \int_\Omega (u_1^\epsilon(t)+\delta)^{1-\sigma} + \int_\Omega (u_4^\epsilon(t)+\delta)^{1-\sigma} =: \mathcal E_1^{\delta,\epsilon}(t)  + \mathcal E_4^{\delta,\epsilon}(t)  . \nonumber
	\end{align}
	We denote by $u_j^{\delta,\epsilon}:=u_j^\epsilon+\delta$ for $\delta>0$. By the first equation of \eqref{MainProblem},    
	\begin{align}
		\frac{d \mathcal E_1^{\delta,\epsilon}  }{dt}  =\,&\,  \frac{4\sigma}{1-\sigma} d_1^{\epsilon} \int_\Omega \Big|\nabla \sqrt{(u_1^{\delta,\epsilon})^{1-\sigma}} \hspace*{0.05cm} \Big|^2 + (1-\sigma) \int_\Omega (u_1^{\delta,\epsilon})^{-\sigma} (-k_1u_1^\epsilon u_2^\epsilon + l_1 u_3^\epsilon). \nonumber 
	\end{align}
	Integrating the latter equality over $(0,T)$ and noting the nonnegativity of $\mathcal E_1$ give
	\begin{equation}
	\begin{aligned}
		&\, \mathcal E_1^{\delta,\epsilon}(0)+ \frac{4\sigma}{1-\sigma} d_1^{\epsilon}  \iint_{Q_T} \Big|\nabla \sqrt{(u_1^{\delta,\epsilon})^{1-\sigma}} \hspace*{0.05cm} \Big|^2 +  l_1(1-\sigma)\iint_{Q_T}  \frac{ u_3^\epsilon}{(u_1^{\delta,\epsilon})^{\sigma}}   \\
		&\, \hspace*{1cm} = \mathcal E_1^{\delta,\epsilon}(T)  + (1-\sigma)k_1    \iint_{Q_T}     (u_1^{\delta,\epsilon})^{-\sigma}u_1^\epsilon  u_2^\epsilon .  
	\end{aligned}
	\end{equation}
	Summing the equations of $\ue_2$ and $\ue_3$ then integrate on $\Omega\times(0,T)$ lead to
	\begin{equation*}
		\sup_{T\ge 0}\int_{\Omega}(\ue_2(T)+\ue_3(T))dx \le \int_{\Omega}(u_{20}+u_{30})dx.
	\end{equation*}
	Integrating the equation of $\ue_1$ on $\Omega\times(0,T)$ gives
	\begin{align*}
		 \int_{\Omega}\ue_1(T)dx &\le \int_{\Omega}u_{10}dx + l_1\int_0^T\int_{\Omega}\ue_3(t)dxdt\\
		&\le \int_{\Omega}u_{10}dx + l_1T\int_{\Omega}(u_{20}+u_{30})dx.
	\end{align*}
	Therefore, by H\"older's inequality
	\begin{equation*}
		\mathcal{E}_{1}^{\delta,\eps}(T) \le \bra{\int_{\Omega}(\ue_1(T)+1)dx}^{1-\sigma}|\Omega|^{\sigma} \le C(T).
	\end{equation*}

	We now use $(u_1^{\delta,\eps})^{-\sigma}\ue_1 \le (u_1^{\delta,\eps})^{1-\sigma} \le u_1^{\delta,\eps} + 1$, let $\delta \to 0$, then use Fatou's lemma to get
	\begin{align*}	
		&\,   \frac{4\sigma}{1-\sigma} d_1^{\epsilon}  \iint_{Q_T} \Big|\nabla \sqrt{(u_1^{\epsilon})^{1-\sigma}} \hspace*{0.05cm} \Big|^2 +  l_1(1-\sigma)\iint_{Q_T}  \frac{ u_3^\epsilon}{(u_1^{ \epsilon})^{\sigma}}  \nonumber\\
		&\le C(T) +  k_1    \limsup_{\delta \to 0}\iint_{Q_T}     (u_1^{\delta,\epsilon})^{1-\sigma}  u_2^\epsilon   \nonumber \\
		&\le C(T) + k_1 \|u_1^{\delta,\epsilon}+1\|_{L^{2}(Q_T)}  \| u_2^\epsilon \|_{L^2(Q_T)}   \le {C\left(T,\|u_0\|_{L^{2+}(\Omega)^4} \right)}.  \nonumber  
	\end{align*}
	The term $\mathcal E_4^{\delta,\epsilon}$ can be treated similarly to $\mathcal E_1^{\delta,\epsilon}$. The inequality (\ref{EnerEstu1u4}) then follows.
\end{proof}

\section{Proofs}

\subsection{Proof of Theorem \ref{thm1}} 

\begin{lemma}  \label{IrRegularityLemCase1} Assume \eqref{AssumpOnDj} and $d_2 = d_3$. For any $q> N+2 $, there  exists  $\epsilon_q  \in (0,1)$ such that  
\begin{align}
 \sup_{0<\epsilon<\epsilon_q} \Big(  \|\partial_t u_1^\epsilon\|_{L^{q}(Q_T)} + \|\Delta u_1^\epsilon\|_{L^{q}(Q_T)} + \|\ue_1\|_{\LQ{\infty}} + \|\nabla \ue_1\|_{\LQ{\infty}}\Big) \le C(T),  \label{RegularityU1IrCase1} \\
  \sup_{0<\epsilon<\epsilon_q} \Big(  \|\partial_t u_4^\epsilon\|_{L^{q}(Q_T)} + \|\Delta u_4^\epsilon\|_{L^{q}(Q_T)} + \|\ue_4\|_{\LQ{\infty}} + \|\na\ue_4\|_{\LQ{\infty}} \Big) \le C(T), 
  \label{RegularityU4IrCase1}
\end{align} 
{where the constants depend on $\|u_0\|_{W^{2,q}(\Omega) \times L^q(\Omega)^2 \times W^{2,q}(\Omega)}$.}
\end{lemma}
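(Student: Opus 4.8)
The plan is to bootstrap. In the case $d_2=d_3$, Lemma~\ref{IrRegularityLem}(a) already provides uniform-in-$\eps$ bounds $\|\ue_2\|_{\LQ{p}}+\|\ue_3\|_{\LQ{p}}\le C(T)$ for \emph{every} finite $p$, and I would feed this into the equations of $\ue_1$ and $\ue_4$ through the heat regularisation of Lemma~\ref{lem:heat_regularisation}, keeping track that its constants $C_1,C_2$ stay bounded as $\eps\to0$ because $d_1^\epsilon,d_4^\epsilon\to d_1,d_4\in(0,\infty)$. So first I fix $q>N+2$, pick $\eps_q>0$ as in Lemma~\ref{IrRegularityLem}(a) (shrinking it finitely often below if needed), and recall $u_{10},u_{40}\in W^{2,q}(\Omega)$ by hypothesis.

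\textbf{Step 1: an $L^\infty$ bound via comparison.} Discarding the nonpositive terms $-k_1\ue_1\ue_2$ and $-l_2\ue_2\ue_4$, the first and fourth equations of \eqref{MainProblem} give
\begin{equation*}
\partial_t\ue_1-d_1^\epsilon\Delta\ue_1\le l_1\ue_3, \qquad \partial_t\ue_4-d_4^\epsilon\Delta\ue_4\le k_2\ue_3 \quad\text{in }Q_T,
\end{equation*}
with homogeneous Neumann boundary conditions, so by the parabolic comparison principle (and positivity preservation) $0\le\ue_1\le w_1^\eps$ and $0\le\ue_4\le w_4^\eps$, where $w_1^\eps,w_4^\eps$ solve $(\H_{d_1^\epsilon}(u_{10},l_1\ue_3))$ and $(\H_{d_4^\epsilon}(u_{40},k_2\ue_3))$. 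Since $q>N+2>(N+2)/2$, Lemma~\ref{lem:heat_regularisation} applies with the exponent $p=\infty$, and combining the uniform $\LQ{q}$ bound on $\ue_3$ with the uniform boundedness of $C_1,C_2$ (which hold because $d_1^\epsilon,d_4^\epsilon$ are bounded above and away from $0$ for small $\eps$) yields $\sup_{0<\eps<\eps_q}\bigl(\|\ue_1\|_{\LQ{\infty}}+\|\ue_4\|_{\LQ{\infty}}\bigr)\le C(T)$.

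\textbf{Step 2: full $W^{2,1}_q$ and the gradient bound.} With $\ue_1,\ue_4$ now bounded in $\LQ{\infty}$ and $\ue_2,\ue_3$ bounded in $\LQ{q}$, the reaction terms $f_1(\ue)=-k_1\ue_1\ue_2+l_1\ue_3$ and $f_4(\ue)=-l_2\ue_2\ue_4+k_2\ue_3$ satisfy $\|f_1(\ue)\|_{\LQ{q}}+\|f_4(\ue)\|_{\LQ{q}}\le C(T)$ uniformly in $\eps\in(0,\eps_q)$. Applying Lemma~\ref{lem:heat_regularisation} directly to $\partial_t\ue_j-d_j^\epsilon\Delta\ue_j=f_j(\ue)$, $j\in\{1,4\}$, with exponent $q>N+2$ — so that the gradient exponent there is $r=\infty$ — and once more using that $C_1,C_2$ remain bounded, gives $\sup_{0<\eps<\eps_q}\bigl(\|\partial_t\ue_j\|_{\LQ{q}}+\|\Delta\ue_j\|_{\LQ{q}}+\|\na\ue_j\|_{\LQ{\infty}}\bigr)\le C(T)$ for $j\in\{1,4\}$. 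Together with Step~1 this is exactly \eqref{RegularityU1IrCase1}--\eqref{RegularityU4IrCase1}, the constants depending on $\|u_0\|_{W^{2,q}(\Omega)\times L^q(\Omega)^2\times W^{2,q}(\Omega)}$.

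\textbf{Where the difficulty lies.} The only real subtlety is the order of the two steps: one \emph{cannot} apply Lemma~\ref{lem:heat_regularisation} to the $\ue_1$-equation straight away, since the quadratic term $\ue_1\ue_2$ is not controlled before an $L^\infty$ bound on $\ue_1$ is known — the same obstruction already noted in the proof of Lemma~\ref{lem:Estimates_u1u4}, where the bare $\LQ{1+}$ right-hand side is too weak in high dimensions. The comparison step in Step~1 breaks this circularity, because the majorant equation has $\ue_3$ alone as source and $\ue_3\in\LQ{q}$ for every $q$ precisely thanks to $d_2=d_3$ and the improved duality estimate. The secondary point, keeping the maximal-regularity constants bounded in $\eps$, is automatic since Lemma~\ref{lem:heat_regularisation} records the dependence on $d$ explicitly and $d_j^\epsilon\to d_j\in(0,\infty)$ under \eqref{AssumpOnDj}.
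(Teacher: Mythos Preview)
Your proof is correct and follows essentially the same approach as the paper: first obtain the $L^\infty$ bound on $\ue_1,\ue_4$ via the comparison principle with the majorant equations $(\H_{d_j^\epsilon}(u_{j0},\,\cdot\,\ue_3))$ and Lemma~\ref{lem:heat_regularisation} (using $q>(N+2)/2$), then feed this back to control the full nonlinearity in $\LQ{q}$ and apply Lemma~\ref{lem:heat_regularisation} once more with $q>N+2$ to get the $\partial_t$, $\Delta$, and $L^\infty$ gradient estimates. Your discussion of the $\eps$-dependence of the constants and of why the two-step bootstrap is necessary is in fact more explicit than the paper's.
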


\begin{proof} By Lemma \ref{IrRegularityLem} there  exists  $\epsilon_q  \in (0,1)$ such that 
\begin{align}
\sup_{0<\epsilon< \epsilon_q} \Big( \|u_2^\epsilon\|_{L^{q}(Q_T)  } + \|u_3^\epsilon\|_{L^{q}(Q_T) } \Big) \le {C\left(T,\|u_{20}\|_{L^q(\Omega)},\|u_{30}\|_{L^q(\Omega)}\right)}.  \label{RegularityU3VIrCase1}
\end{align}
From the equation of $\ue_1$ in \eqref{MainProblem}, $\pa_t u_1^\epsilon -  d_1 \Delta u_1^\epsilon   \le l_1 u_3^\epsilon$, where $l_1 u_3^\epsilon$ is uniformly bounded in $ L^q(Q_T)$ w.r.t. $\epsilon$.  
Since $q>  (N+2)/2 $, we can make use of the comparison principle and  heat regularisation (see Lemma \ref{lem:heat_regularisation})  to conclude that   
\begin{equation*}
	   \|\ue_1\|_{\LQ{\infty}}\le {C\left(d_1^\epsilon,T,\|u_{10}\|_{W^{2,q}(\Omega)}\right) \le C\left( T,\|u_{10}\|_{W^{2,q}(\Omega)}\right)} 
\end{equation*}
since $C (d_1^\epsilon,T,\|u_{10}\|_{W^{2,q}(\Omega)} )$ depends continuously on $d_1^\eps$ and due to \eqref{AssumpOnDj}. 
This, together with (\ref{RegularityU3VIrCase1}), implies that 
\begin{align*}
	\sup_{0<\epsilon<\epsilon_q} \left\| - k_1u_1^\epsilon u_2^\epsilon + l_1 u_3^\epsilon\right\|_{L^q(Q_T)} \le C(T).
\end{align*}
Taking into account $q>N+2$, another application of Lemma \ref{lem:heat_regularisation} gives the desired estimates
\begin{align*}
	\sup_{0<\epsilon<\epsilon_q} \left( \|\partial_t u_1^\epsilon\|_{\LQ{q}} +  \|\Delta u_1^\epsilon\|_{\LQ{q}} + \|\na\ue_1\|_{\LQ{\infty}} \right)  \le C(T) .  
\end{align*}
 Estimates for $\ue_4$ are obtained in the same way.
\end{proof}
The bounds in Lemma \ref{IrRegularityLemCase1} will be used in combination with the modified energy in Lemma \ref{TendToManifold} (a) to obtain convergence of critical manifold as well as gradient estimates of $\ue_2$ and $\ue_3$.

\begin{lemma}\label{lem1}
	Assume \eqref{AssumpOnDj} and $d_2 = d_3$. Then there exists $\eps_0>0$ such that
	\begin{equation}
	\begin{aligned}
		& \sup_{0<\eps<\eps_0}\bra{ \|\na \ue_3\|_{\LQ{2}}^2 +  \frac{1}{\eps}\iint_{Q_T}\left|(k_1\ue_1+l_2\ue_4)\ue_2 - (k_2+l_1)\ue_3\right|^2  }  \\
		& \hspace*{2.5cm} \le { C(T,\|u_0\|_{W^{2,q}(\Omega)\times L^{q}(\Omega)^2 \times W^{2,q}(\Omega)}) } ,
	\end{aligned}
	\end{equation}
	{where $q>\max\{N+2;4\}$.}
\end{lemma}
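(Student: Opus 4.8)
The plan is to specialise the modified-energy inequality of Lemma~\ref{TendToManifold}(a) to $p=2$ and $\alpha=0$, and to control its right-hand side by means of the uniform-in-$\eps$ bounds already established in Lemmas~\ref{IrRegularityLem}(a), \ref{IrRegularityLemCase1} and \ref{IrRegularityLemU1} (the first two of which use the hypothesis $d_2=d_3$; the initial-data hypotheses of Lemma~\ref{TendToManifold}(a) hold since $W^{2,q}(\Omega)\hookrightarrow L^\infty(\Omega)$). Writing $A_2=k_1\ue_1+l_2\ue_4$ and $A_3=k_2+l_1$ as in Section~\ref{ModEneSection}, inequality \eqref{EntropyInequalitya} with $p=2$ reads
\begin{equation*}
\frac1\eps\intQT\bigl|A_2\ue_2-A_3\ue_3\bigr|^2+\sum_{j=2,3}\intQT A_j\,|\na\ue_j|^2\ \le\ C\Bigl(1+\intQT(\ue_2)^2\bigl[\pa_t A_2+A_2^{-1}|\na A_2|^2\bigr]\Bigr).
\end{equation*}
Since $A_3=k_2+l_1$ is a positive constant and $A_2|\na\ue_2|^2\ge0$, the left-hand side already dominates $\tfrac1\eps\intQT|(k_1\ue_1+l_2\ue_4)\ue_2-(k_2+l_1)\ue_3|^2+(k_2+l_1)\|\na\ue_3\|_{\LQ2}^2$, which is exactly the quantity to be estimated; thus the task reduces to bounding the two integrals on the right uniformly for $\eps$ small.

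The first of these is routine. Since $\pa_t A_2=k_1\pa_t\ue_1+l_2\pa_t\ue_4$, Hölder's inequality gives
\begin{equation*}
\Bigl|\intQT(\ue_2)^2\,\pa_t A_2\Bigr|\le\|\ue_2\|_{\LQ{2q'}}^2\bigl(k_1\|\pa_t\ue_1\|_{\LQ q}+l_2\|\pa_t\ue_4\|_{\LQ q}\bigr),\qquad q'=\tfrac{q}{q-1},
\end{equation*}
with $q>\max\{N+2,4\}$; Lemma~\ref{IrRegularityLemCase1} bounds $\|\pa_t\ue_1\|_{\LQ q}$ and $\|\pa_t\ue_4\|_{\LQ q}$ (this is where $q>N+2$ enters), while Lemma~\ref{IrRegularityLem}(a) bounds $\ue_2$ in $\LQ q$; since $q\ge4>3$ one has $2q'=2q/(q-1)\le q$, hence $\|\ue_2\|_{\LQ{2q'}}$ is controlled as well.

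The genuine obstacle is the second integral $\intQT(\ue_2)^2A_2^{-1}|\na A_2|^2$, precisely because — as stressed in the introduction — $A_2=k_1\ue_1+l_2\ue_4$ possesses no positive lower bound uniform in $\eps$, so that $A_2^{-1}$ is truly singular. My plan is first to use $A_2\ge k_1\ue_1$, $A_2\ge l_2\ue_4$ and $|\na A_2|^2\le2k_1^2|\na\ue_1|^2+2l_2^2|\na\ue_4|^2$ to reduce it pointwise to
\begin{equation*}
A_2^{-1}|\na A_2|^2\ \le\ 2k_1\frac{|\na\ue_1|^2}{\ue_1}+2l_2\frac{|\na\ue_4|^2}{\ue_4},
\end{equation*}
understood through the regularisation $\ue_j\mapsto\ue_j+\delta$, $\delta\to0$, exactly as in the proof of Lemma~\ref{IrRegularityLemU1}. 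The key point is then that, for $j\in\{1,4\}$ and any $p'\in[1,2)$ with $\sigma:=p'-1\in[0,1)$, the uniform $\LQ\infty$-bound on $\na\ue_j$ from Lemma~\ref{IrRegularityLemCase1} combined with Lemma~\ref{IrRegularityLemU1} yields
\begin{equation*}
\intQT\Bigl(\frac{|\na\ue_j|^2}{\ue_j}\Bigr)^{p'}=\intQT\frac{|\na\ue_j|^{2p'}}{(\ue_j)^{p'}}\le\|\na\ue_j\|_{\LQ\infty}^{2(p'-1)}\intQT\frac{|\na\ue_j|^2}{(\ue_j)^{1+\sigma}}\le C,
\end{equation*}
i.e.\ $|\na\ue_j|^2/\ue_j$ is bounded in $\LQ{p'}$ for every $p'<2$. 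Choosing $p'<2$ close enough to $2$ that its Hölder conjugate $p=p'/(p'-1)$ satisfies $2p\le q$ — possible exactly because $q>4$ — the $\LQ q$-bound on $\ue_2$ and Hölder's inequality then give
\begin{equation*}
\intQT(\ue_2)^2A_2^{-1}|\na A_2|^2\le C\,\|\ue_2\|_{\LQ{2p}}^2\sum_{j\in\{1,4\}}\Bigl\|\tfrac{|\na\ue_j|^2}{\ue_j}\Bigr\|_{\LQ{p'}}\le C(T).
\end{equation*}

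Inserting the two bounds into the first displayed inequality, dropping the nonnegative term $\intQT A_2|\na\ue_2|^2$ on the left, and taking $\eps_0$ to be the smallest of the thresholds provided by Lemmas~\ref{IrRegularityLem}, \ref{IrRegularityLemCase1}, \ref{IrRegularityLemU1} (and of the $\eps$ below which $d_j^\eps\ge d_j/2$), one obtains the asserted estimate, the constant depending only on $T$ and $\|u_0\|_{W^{2,q}(\Omega)\times L^{q}(\Omega)^2\times W^{2,q}(\Omega)}$. The only step I anticipate needing real care is the handling of the singular weight $A_2^{-1}$ together with the attendant $\delta$-regularisation; the remaining ingredients are immediate consequences of the uniform bounds already at hand.
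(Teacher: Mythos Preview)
Your proof is correct and follows essentially the same approach as the paper: specialise Lemma~\ref{TendToManifold}(a) to $p=2$, control $\intQT(\ue_2)^2\pa_tA_2$ by H\"older and the time-derivative bounds of Lemma~\ref{IrRegularityLemCase1}, and handle the singular term $\intQT(\ue_2)^2|\na A_2|^2/A_2$ by combining the $L^\infty$ gradient bound of Lemma~\ref{IrRegularityLemCase1} with the weighted estimate of Lemma~\ref{IrRegularityLemU1}. The only difference is organisational: the paper applies a three-way H\"older directly to $(\ue_2)^2\cdot|\na A_2|^{2/(1+\sigma)}A_2^{-1}\cdot|\na A_2|^{2\sigma/(1+\sigma)}$, whereas you first reduce $A_2^{-1}|\na A_2|^2$ pointwise to $\sum_j|\na\ue_j|^2/\ue_j$ and then bound this in $\LQ{p'}$ for $p'<2$ --- the same ingredients, repackaged.
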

\begin{proof}
	From Lemma \ref{TendToManifold} (a), by choosing $p=2$, we have in particular
	\begin{align}\label{eq2}
		\frac{1}{\eps}\iint_{Q_T}\big|A_2\ue_2 - A_3\ue_3\big|^2 + A_3\iint_{Q_T}|\na \ue_3|^2 \le C\bra{1+\iint_{Q_T}(\ue_2)^2\sbra{\pa_t A_2 + \frac{|\na A_2|^2}{A_2}}},
	\end{align}
	{where $C=C(\|u_0\|_{L^{\infty}(\Omega) \times L^2(\Omega)^2 \times L^{\infty}(\Omega)})$.}
	From Lemmas \ref{IrRegularityLem} (a) and \ref{IrRegularityLemCase1},   there exists $0<\eps_0<\epsilon_q$ such that 
	\begin{equation}
	\label{eq3}
	\begin{aligned}
		\left|\iint_{Q_T}(\ue_2)^2\pa_tA_2\right| \le &\, C\|\ue_2\|_{\LQ{4}}^2\bra{\|\pa_t\ue_1\|_{\LQ{2}} + \|\pa_t\ue_4\|_{\LQ{2}}} \\
		\le &\, { C(T,\|u_0\|_{W^{2,q}(\Omega)\times L^{q}(\Omega)^2 \times W^{2,q}(\Omega)}) }.
	\end{aligned}
	\end{equation}
	Now using Lemmas \ref{IrRegularityLemCase1} and \ref{IrRegularityLem},  \ref{IrRegularityLemU1}, we estimate the remaining term for some $\sigma\in (0,1)$, $\sigma$ is enough closed to $1$,
	\begin{equation}\label{eq4}
	\begin{aligned}
		\left|\iint_{Q_T}(\ue_2)^2\frac{|\na A_2|^2}{A_2} \right| &= \iint_{Q_T}(\ue_2)^2\frac{|\na A_2|^{2/(1+\sigma)}}{A_2}|\na A_2|^{2\sigma/(1+\sigma)}\\
		&\le \|\na A_2\|_{\LQ{\infty}}^{\frac{2\sigma}{1+\sigma}}\bra{\iint_{Q_T}\frac{|\na A_2|^2}{A_2^{1+\sigma}}}^{\frac{1}{1+\sigma}}\bra{\iint_{Q_T}(\ue_2)^{\frac{2(1+\sigma)}{\sigma}}}^{\frac{\sigma}{1+\sigma}}\\
		&\le { C(T,\|u_0\|_{W^{2,q}(\Omega)\times L^{q}(\Omega)^2 \times W^{2,q}(\Omega)}), }
	\end{aligned}
	\end{equation}
	where we used
	\begin{equation*}
		\iint_{Q_T}\frac{|\na A_2|^2}{A_2^{1+\sigma}} \le C\bra{\iint_{Q_T}\frac{|\na \ue_1|^2}{(\ue_1)^{1+\sigma}} + \iint_{Q_T}\frac{|\na \ue_4|^2}{(\ue_4)^{1+\sigma}}} \le C(T)
	\end{equation*}
	thanks to the non-negativity of $\ue_1$ and $\ue_4$. From \eqref{eq3} and \eqref{eq4} we obtain the desired estimates of Lemma \ref{lem1}.
\end{proof}
Since $A_2 = k_1\ue_1 + l_2\ue_4$ and we do not have lower bounds of $\ue_1$ and $\ue_4$, the energy estimates in Lemma \ref{TendToManifold} (a) does not give gradient estimate for $\ue_2$. To overcome this, we use the relation between $\ue_2$ and $\ue_3$
\begin{equation}\label{eq5}
	\pa_t(\ue_2 + \ue_3) - \Delta (d_2^{\eps}\ue_2 + d_3^{\eps}\ue_3) = 0
\end{equation}
to transfer the gradient estimates from $\ue_3$ to $\ue_2$.

\begin{lemma}\label{lem2}
	Assume \eqref{AssumpOnDj} and $d_2 = d_3$. Then it holds
	\begin{equation*}
		\sup_{0<\epsilon< \epsilon_0}\|\na \ue_2\|_{\LQ{2}} \le { C(T,\|u_0\|_{W^{2,q}(\Omega)\times L^{q}(\Omega)^2 \times W^{2,q}(\Omega)}) }
	\end{equation*}
	for {where $q>\max\{N+2;4\}$}.
\end{lemma}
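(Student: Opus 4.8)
The plan is to exploit the linear parabolic equation \eqref{eq5} satisfied by $w^\eps := \ue_2 + \ue_3$, namely $\pa_t w^\eps - \Delta(d_2^\eps \ue_2 + d_3^\eps \ue_3) = 0$, and to transfer the gradient control from $\ue_3$ (already available through Lemma \ref{lem1}) to $\ue_2 = w^\eps - \ue_3$. Since $d_2 = d_3$, we have $\lim_{\eps\to 0}(d_2^\eps - d_3^\eps) = 0$, so we may rewrite the equation as
\begin{equation*}
	\pa_t w^\eps - d_3^\eps \Delta w^\eps = (d_2^\eps - d_3^\eps)\Delta \ue_2 = (d_2^\eps - d_3^\eps)\Delta(w^\eps - \ue_3),
\end{equation*}
but it is cleaner to keep it in divergence form and test with $w^\eps$ itself.

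First I would test \eqref{eq5} against $w^\eps$ and integrate over $Q_T$. The time-derivative term yields $\tfrac12\|w^\eps(T)\|_{L^2(\Omega)}^2 - \tfrac12\|u_{20}+u_{30}\|_{L^2(\Omega)}^2$, and the diffusion term gives $\intQT \na(d_2^\eps\ue_2 + d_3^\eps\ue_3)\cdot\na(\ue_2+\ue_3)$. Expanding $d_2^\eps\ue_2 + d_3^\eps\ue_3 = d_2^\eps w^\eps + (d_3^\eps - d_2^\eps)\ue_3$, this becomes
\begin{equation*}
	d_2^\eps \intQT |\na w^\eps|^2 + (d_3^\eps - d_2^\eps)\intQT \na\ue_3 \cdot \na w^\eps .
\end{equation*}
Hence, using Young's inequality $|(d_3^\eps-d_2^\eps)\na\ue_3\cdot\na w^\eps| \le \tfrac{d_2^\eps}{2}|\na w^\eps|^2 + \tfrac{(d_3^\eps-d_2^\eps)^2}{2d_2^\eps}|\na\ue_3|^2$ and absorbing, one obtains
\begin{equation*}
	\frac{d_2^\eps}{2}\intQT |\na w^\eps|^2 \le \frac12\|u_{20}+u_{30}\|_{L^2(\Omega)}^2 + \frac{(d_3^\eps - d_2^\eps)^2}{2d_2^\eps}\intQT |\na\ue_3|^2 .
\end{equation*}
By Lemma \ref{lem1}, $\|\na\ue_3\|_{\LQ{2}}$ is bounded uniformly in $\eps$; by \eqref{AssumpOnDj} the coefficients $d_2^\eps, d_3^\eps$ are bounded above and below away from zero, and $(d_3^\eps - d_2^\eps)^2$ is in particular bounded (indeed $\to 0$). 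Therefore $\|\na w^\eps\|_{\LQ{2}}$ is bounded uniformly in $\eps\in(0,\eps_0)$, and since $q > 4 \ge 2$ the initial datum $u_{20}+u_{30}\in L^q(\Omega)\subset L^2(\Omega)$, so the right-hand side is finite. Finally $\na\ue_2 = \na w^\eps - \na\ue_3$, and both terms are bounded in $\LQ{2}$, which gives the claim with a constant depending only on $T$ and $\|u_0\|_{W^{2,q}(\Omega)\times L^q(\Omega)^2 \times W^{2,q}(\Omega)}$.

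The only mild subtlety — not really an obstacle — is justifying the integration by parts and the use of $w^\eps$ as a test function; this is legitimate because $\ue$ is a classical solution (Theorem \ref{GloExis}), so $w^\eps$ is smooth on $\overline\Omega\times(0,T)$ with the homogeneous Neumann condition, and the boundary terms vanish. One should note that the estimate does not even require $d_2 = d_3$ in an essential way at the level of this step — only the uniform $\LQ{2}$-bound on $\na\ue_3$ from Lemma \ref{lem1} (which does use $d_2=d_3$) together with the boundedness of $(d_3^\eps-d_2^\eps)^2/d_2^\eps$; the hypothesis $d_2=d_3$ guarantees the latter factor stays bounded as $\eps\to 0$.
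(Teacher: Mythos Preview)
Your proposal is correct and essentially identical to the paper's own proof: both test the equation $\pa_t(\ue_2+\ue_3) - \Delta(\de_2\ue_2+\de_3\ue_3)=0$ against $w^\eps=\ue_2+\ue_3$, rewrite the diffusion term as $\de_2|\na w^\eps|^2 + (\de_3-\de_2)\na\ue_3\cdot\na w^\eps$, absorb via Young's inequality, and invoke Lemma~\ref{lem1} for the $\|\na\ue_3\|_{\LQ{2}}$ bound. Your closing remarks on the justification of the testing and on the role of the hypothesis $d_2=d_3$ are also accurate.
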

\begin{proof}
	By rewriting \eqref{eq5} as
	\begin{equation}\label{eq6}
		\pa_t(\ue_2 + \ue_3) - \de_2\Delta(\ue_2 + \ue_3) = (\de_3 - \de_2)\Delta \ue_3
	\end{equation}
	then multiplying by $\ue_2 + \ue_3$ gives
	\begin{equation*}
	\begin{aligned}
		&\frac 12\frac{d}{dt}\|\ue_2+\ue_3\|_{\LO{2}}^2 + \de_2\int_{\Omega}|\na (\ue_2+\ue_3)|^2\\ 
		& \hspace*{1cm} = (\de_3 - \de_2)\int_{\Omega}\na\ue_3 \cdot \na(\ue_2+\ue_3)\\
		& \hspace*{1cm} \le \frac{\de_2}{2}\int_{\Omega}|\na(\ue_2+\ue_3)|^2 + \frac{(\de_3-\de_2)^2}{2\de_2}\int_{\Omega}|\na\ue_3|^2.
	\end{aligned}
	\end{equation*}
	Integrating this on $(0,T)$ gives
	\begin{equation*}
		\de_2\|\na(\ue_2+\ue_3)\|_{\LQ{2}}^2 \le \|u_{20}+u_{30}\|_{\LO{2}}^2 + \frac{(\de_3-\de_2)^2}{\de_2}\|\na \ue_3\|_{\LQ{2}}^2.
	\end{equation*}
	Thanks to \eqref{AssumpOnDj} and Lemma \ref{lem1}, it follows that
	\begin{equation*}
		\|\na(\ue_2+\ue_3)\|_{\LQ{2}}^2 \le { C(T,\|u_0\|_{W^{2,q}(\Omega)\times L^{q}(\Omega)^2 \times W^{2,q}(\Omega)}) }.
	\end{equation*} 
	The gradient estimate of $\ue_2$ then follows from
	\begin{equation*}
		\|\na \ue_2\|_{\LQ{2}}^2 \le 2\|\na(\ue_2+\ue_3)\|_{\LQ{2}}^2 + 2\|\nabla \ue_3\|_{\LQ{2}}^2.
	\end{equation*}
\end{proof}
\begin{remark}
	Well-known duality methods show that any $\LQ{p}$-estimate for $\ue_3$ can be transferred to $\ue_2$ provided \eqref{eq5} holds and vice versa, see e.g. \cite{pierre2010global}. Lemma \ref{lem2} shows this transfer of regularity also holds for $\LQ{2}$-estimate of gradients. In fact, it is true for $\LQ{q}$-estimate of gradients for any $1<q<\infty$. Indeed, by \eqref{eq6}
	\begin{equation*}
		\pa_t(\ue_2+\ue_3) - \de_2\Delta(\ue_2+\ue_3) = (\de_3-\de_2)\mathrm{div}(\na\ue_3),
	\end{equation*}
	we can apply results similarly to \cite{byun2007optimal} (for Neumann instead of Dirichlet boundary conditions) to obtain
	\begin{equation*}
		\|\ue_2+\ue_3\|_{L^q(0,T;W^{1,q}(\Omega))}\le C\bra{1+\|\na\ue_3\|_{\LQ{q}}}
	\end{equation*}
	which gives bounds on $\|\na \ue_2\|_{\LQ{q}}$ depending on $\|\na \ue_3\|_{\LQ{q}}$.
\end{remark}
We need another result concerning the well-posedness of the limit system \eqref{ReducedSys}.

\begin{proposition}\label{pro1}
	Assume $d_2 = d_3$. Then for any non-negative, bounded initial data $(u_{10},v_0,u_{40})$, there exists a unique bounded weak solution $(u_1,v,u_4)$ to \eqref{ReducedSys}.
\end{proposition}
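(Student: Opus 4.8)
The plan is to exploit the fact that when $d_2=d_3$ the limit system \eqref{ReducedSys} partially decouples. First I would solve the $v$-equation, which in this case is simply $\pa_t v-d_2\Delta v=0$ with homogeneous Neumann boundary condition and $v(0)=v_0$; this has a unique solution $v\in C([0,T];L^2(\Omega))\cap L^2(0,T;H^1(\Omega))$, which is non-negative and satisfies $0\le v\le\|v_0\|_{L^\infty(\Omega)}$ by the maximum principle, and is smooth for positive times by parabolic regularisation. From now on $v$ is a known, bounded, non-negative function.

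With $v$ fixed, it remains to solve the reaction-diffusion system for $(u_1,u_4)$ with reaction terms $\mp R$, where
\[
R(u_1,u_4):=\frac{(k_1k_2u_1-l_1l_2u_4)\,v}{k_1u_1+l_2u_4+k_2+l_1}.
\]
The key observation is that on the positive cone $\{u_1\ge 0,\ u_4\ge 0\}$ the denominator is bounded below by $k_2+l_1>0$ while the numerator is linear, so a direct computation gives $|\pa_{u_1}R|+|\pa_{u_4}R|\le C\|v\|_{L^\infty}$ there; composing with the Lipschitz retraction $(u_1,u_4)\mapsto(u_1^+,u_4^+)$ onto the cone, $R$ extends to a globally Lipschitz function on $\R^2$ with at most linear growth, with constants depending only on the reaction rates and $\|v_0\|_{L^\infty}$. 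I would then solve the truncated system via the variation-of-constants formula and a Banach fixed point in $C([0,T];L^\infty(\Omega))^2$, using that the Neumann heat semigroup is a contraction on $L^\infty(\Omega)$; since the truncated $R$ has linear growth, Gronwall's inequality yields an a priori bound $\|u_1(t)\|_{L^\infty}+\|u_4(t)\|_{L^\infty}\le C(T,\|u_{10}\|_{L^\infty},\|u_{40}\|_{L^\infty},\|v_0\|_{L^\infty})$, so the fixed point exists on all of $[0,T]$. Testing the equations with $u_1$ and $u_4$ then gives $u_1,u_4\in L^2(0,T;H^1(\Omega))$ and, via the equations, $\pa_t u_1,\pa_t u_4\in L^2(0,T;(H^1(\Omega))')$, so $(u_1,v,u_4)$ is a bounded weak solution in the expected class.

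Next I would remove the truncation by checking quasi-positivity: when $u_1=0$ one has $-R=l_1l_2u_4^+v/(l_2u_4^++k_2+l_1)\ge 0$, and when $u_4=0$ one has $R=k_1k_2u_1^+v/(k_1u_1^++k_2+l_1)\ge 0$, so the standard invariant-region (comparison) argument forces $u_1,u_4\ge 0$ on $Q_T$; hence $u_i^+=u_i$ and the solution just constructed actually solves \eqref{ReducedSys}. For uniqueness, suppose $(u_1,v,u_4)$ and $(\widetilde u_1,\widetilde v,\widetilde u_4)$ are two bounded weak solutions with the same initial data: first $v=\widetilde v$ by uniqueness for the heat equation, and then, subtracting the $u_1$- and $u_4$-equations, testing with $u_1-\widetilde u_1$ and $u_4-\widetilde u_4$ and using that $R$ is Lipschitz on the bounded set containing the ranges of both solutions, a Gronwall argument gives $u_1=\widetilde u_1$ and $u_4=\widetilde u_4$.

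I do not expect a serious obstacle here: global existence is immediate because the decoupling of $v$ turns the reaction term into one of linear growth, and the apparent singularity of the quotient nonlinearity is harmless on the positive cone where the solution lives. The only point requiring care is organising the truncation / non-negativity bookkeeping so that one genuinely solves the untruncated system, and making sure the Lipschitz and growth constants of the truncated nonlinearity are uniform (they are, depending only on the rate constants and $\|v_0\|_{L^\infty}$). Alternatively, one could bypass the fixed point and run a Faedo--Galerkin scheme, deriving the same $L^\infty$ and energy bounds and passing to the limit.
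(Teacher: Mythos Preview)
Your proposal is correct and follows essentially the same approach as the paper: first decouple and solve the heat equation for $v$ to get a bounded non-negative function, then use that the resulting nonlinearity in the $(u_1,u_4)$-system has linear growth to obtain global bounded solutions, invoke quasi-positivity for non-negativity, and conclude uniqueness from local Lipschitz continuity on bounded sets. The paper's proof is considerably terser but the structure and key observations are identical; your version simply spells out the fixed-point and truncation bookkeeping that the paper leaves implicit.
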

\begin{proof}
	Since $d_2=d_3$, the equation of $v$ is reduced to $\pa_t v- \Delta v = 0$, which gives the global well-posedness of a non-negative solution $v$ as well as the uniform bound
	\begin{equation*}
		\sup_{t\ge 1}\|v(t)\|_{\LO{\infty}} \le \|v_0\|_{\LO{p}}.
	\end{equation*}
	With this bound, it is straightforward that the nonlinearities in the equations of $u_1$ and $u_4$ are bounded by linear functions, which implies at once the global existence of bounded weak solutions. Moreover, the nonlinearities are quasi-positive, and therefore the non-negativitiy of solutions follows, provided the non-negavitiy of initial data. Finally, the uniqueness of solutions is guaranteed by their boundedness and local Lipschitz continuity of the nonlinearities.
\end{proof}

It is now ready to prove Theorem \ref{thm1}.
\begin{proof}[Proof of Theorem \ref{thm1}]
	By Lemma \ref{IrRegularityLemCase1},  $ u_1^\epsilon,u_4^\epsilon $ are uniformly bounded in $L^{\infty}(0,T;W^{1,\infty}(\Omega))$, and  $\partial_t u_1^\epsilon,\partial_t u_4^\epsilon $ in $L^{q}(Q_T)$ for any $1<q<\infty$ {if $u_0\in W^{2,q_0+}(\Omega) \times L^{q_0+}(\Omega)^2 \times W^{2,q_0+}(\Omega)$}.
	The Aubin-Lions lemma yields  that   $\{u_1^\epsilon\}_{\eps>0}, \{u_4^\epsilon\}_{\eps>0}$ are relatively compact in $L^{\infty}(Q_T)$. Hence, up to subsequences, 
	\begin{equation*}
		u_1^\epsilon \to u_1, \quad u_4^\epsilon \to u_4 \quad \text{ in }\quad  L^{\infty}(Q_T)\quad \text{ as }\quad \epsilon\to 0^+.
	\end{equation*}
	Now, by Lemmas \ref{lem1} and \ref{lem2}, $\{\na v^\eps\}_{\eps} = \{\na \ue_2 + \na \ue_3\}_{\eps}$ is bounded in $\LQ{2}$. It follows from \eqref{eq6} that $\pa_t v^\eps = \de_2 \Delta v^\eps + (\de_3 - \de_2)\Delta \ue_3$ is bounded in $L^2(0,T;(H^1(\Omega))')$. Another application the Aubin-Lions lemma gives $\{v^{\eps}\}_{\eps}$ is relatively compact in $\LQ{2}$. Due to Lemma \ref{IrRegularityLem} (a), $\{v^\eps\}_{\eps}$ is relatively compact in {$\LQ{q_0 +}$}.

	\medskip
	
	We shall show the strong convergence of $u_{2}^\epsilon$ and $\ue_3$. We have
	\begin{equation*}
	\begin{aligned}
			\left\|\frac{(k_1 u_1^\epsilon + l_2u_4^\epsilon)v^\epsilon }{k_1 u_1^\epsilon + l_2u_4^\epsilon+k_{2}+ l_{1}} - u_3^\epsilon \right\|_{\LQ{2}} &= \left\|\frac{(k_1\ue_1+l_2\ue_4)\ue_2 - (k_2+l_1)\ue_3}{k_1 u_1^\epsilon + l_2u_4^\epsilon+k_{2}+ l_{1}}\right\|_{\LQ{2}}\\
			&\le \frac{1}{k_2+l_1}\|(k_1\ue_1+l_2\ue_4)\ue_2 - (k_2+l_1)\ue_3\|_{\LQ{2}} \to 0
	\end{aligned}
	\end{equation*}
	thanks to Lemma \ref{lem1}. On the other hand, due to the convergence of $\ue_1 \to u_1, \ue_4 \to u_4$ in $\LQ{\infty}$, and of $v^{\eps} \to v$ in {$\LQ{q_0^+}$}, it follows that
	\begin{equation*}
		\frac{(k_1 u_1^\epsilon + l_2u_4^\epsilon)v^\epsilon }{k_1 u_1^\epsilon + l_2u_4^\epsilon+k_{2}+ l_{1}} \to \frac{(k_1u_1+l_2u_4)v}{k_1u_1+l_2u_4+k_2+l_1} \quad \text{ in } \quad \LQ{2}.
	\end{equation*}
	Therefore, $\ue_3 \to u_3$ in $\LQ{2}$ and consequently
	\begin{equation*}
		\ue_3 \to u_3 = \frac{(k_1u_1+l_2u_4)v}{k_1u_1+l_2u_4+k_2+l_1}\quad \text{ in } \quad {\LQ{q_0^+}} . 
	\end{equation*}
 The strong convergence of $\ue_2 \to u_2$ in ${\LQ{q_0^+}}$ follows immediately.

	\medskip	
	It remains to show that $(u_1,v,u_4)$ is the unique bounded to \eqref{ReducedSys}, and hence the convergence $(\ue_j)_{j=1,\ldots, 4}$ to $(u_j)_{j=1,\ldots,4}$ holds for $\eps\to 0$ and not only for a subsequence. By rewriting
	\begin{equation*}
	\begin{aligned}
		- k_1u_1^\epsilon \ue_2 + l_1u_3^\epsilon  &=   (k_1u_1^\epsilon +l_1) \bigg(  u_3^\epsilon - \frac{(k_1 u_1^\epsilon + l_2u_4^\epsilon)v^\epsilon }{k_1 u_1^\epsilon + l_2u_4^\epsilon+k_{2}+ l_{1}} \bigg)      -   \dfrac{(k_1k_2u_1^\epsilon-l_1l_2u_4^\epsilon)v^\epsilon}{k_1u_1^\epsilon + l_2u_4^\epsilon +k_{2} + l_{1}}\\
		&\quad\longrightarrow -\frac{(k_1k_2 u_1 - l_1l_2u_4)v}{k_1u_1+l_2u_4 + k_2+l_1} \; \text{ in } \LQ{2}.
	\end{aligned}
	\end{equation*}
	Thus, for any test function $\varphi \in L^2(0,T;H^1(\Omega))$, letting $\eps\to 0$ in 
	\begin{equation*}
		\iint_{Q_T} \varphi  \pa_t \ue_1   + \de_1\iint_{Q_T} \na \ue_1\cdot \na\varphi = \iint_{Q_T}(-k_1\ue_1\ue_2 + l_1\ue_3)\varphi,
	\end{equation*}
	leads to 
	\begin{equation*}
		\iint_{Q_T} \varphi  \pa_t u_1 +  d_1\iint_{Q_T}\na u_1 \cdot \na\varphi = -\iint_{Q_T}\frac{(k_1k_2 u_1 - l_1l_2u_4)v}{k_1u_1+l_2u_4 + k_2+l_1} \varphi.
	\end{equation*}
	Similarly,
	\begin{equation*}
		\iint_{Q_T}\varphi\pa_t u_4  +  d_4\iint_{Q_T}\na u_4\cdot \na \varphi  = \iint_{Q_T}\frac{(k_1k_2u_1 - l_1l_2u_4)v}{k_1u_1+l_2u_4+k_2+l_1}\varphi.
	\end{equation*}
	For the equation of $v^\eps$ we let $\eps\to 0$ in 
	\begin{equation*}
		\iint_{Q_T}\varphi\pa_t v^{\eps} +  \de_2\iint_{Q_T}\na v^{\eps}\cdot \na \varphi = - (\de_3-\de_2)\iint_{Q_T}\na \ue_3\cdot \na\varphi,
	\end{equation*}
	to conclude, thanks to $\{\pa_tv^\eps\}_{\eps>0}$ is bounded in $L^2(0,T;(H^1(\Omega))')$, $\{\na v^\eps\}_{\eps>0}, \{\na \ue_3\}_{\eps>0}$ is bounded in $\LQ{2}$, and \eqref{AssumpOnDj} with $d_2 = d_3$,
	\begin{equation*}
		\iint_{Q_T}\varphi\pa_t v + d_2\iint_{Q_T}\na v\cdot \na\varphi = 0.
	\end{equation*}
	Therefore $(u_1,v,u_4)$ is a weak solution to \eqref{ReducedSys} as desired.
\end{proof}

\subsection{Proof of Theorem \ref{thm2}}

\begin{lemma}\label{lem3}
	Assume \eqref{AssumpOnDj} and \eqref{eq7} with $p_0$ in \eqref{p0_assump_thm2}. Then there exists $\epsilon_0>0$ such that 
	\begin{equation*}
		\sup_{0<\eps<\eps_0}\sum_{j\in\{1,4\}}\bra{\|\pa_t\ue_j\|_{\LQ{p_2}}+\|\na\ue_j\|_{\LQ{p_3}}} \le { C(T,\|u_0\|_{W^{2,p_0}(\Omega)\times L^{p_0}(\Omega)^2 \times W^{2,p_0}(\Omega)}) }
	\end{equation*}
	for some $\eps>0$ and 
	\begin{equation}\label{p2p3}
		p_2 = \begin{cases}
			\frac{(N+2)p_0}{2(N+2-p_0)} & \text{if } p_0< \frac{N+2}{2},\\
			< p_0 &\text{if } p_0 = \frac{N+2}{2},\\
			p_0 &\text{if } p_0 > \frac{N+2}{2}
		\end{cases}
		\quad \text{ and } \quad 
		p_3 = \begin{cases}
\frac{(N+2)p_0}{2(N+2)-3p_0} &\text{if } p_0 < \frac{N+2}{2},\\
< \frac{(N+2)p_0}{N+2-p_0} &\text{if } p_0=\frac{N+2}{2},\\
			  \frac{(N+2)p_0}{N+2-p_0} &\text{if } \frac{N+2}{2} < p_0 < N+2,\\
			< \infty &\text{if } p_0 \ge N+2.
		\end{cases}
	\end{equation}
\end{lemma}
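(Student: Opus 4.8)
The plan is to combine the uniform $L^{p_0}$-bounds for $\ue_2,\ue_3$ coming from the improved duality lemma with the heat regularisation lemma applied to the equations of $\ue_1$ and $\ue_4$. First I would invoke Lemma \ref{IrRegularityLem}(b): since \eqref{eq7} holds with $p_0$ as in \eqref{p0_assump_thm2} (note $p_0>2$ in all cases, so the closeness condition \eqref{closeness_d2d3} with $p=p_0$ is exactly \eqref{eq7}), there is $\eps_0>0$ such that $\|\ue_2\|_{\LQ{p_0}}+\|\ue_3\|_{\LQ{p_0}}\le C$ uniformly for $0<\eps<\eps_0$. Because the nonlinearities $f_1(\ue)=-k_1\ue_1\ue_2+l_1\ue_3$ and $f_4(\ue)=-l_2\ue_2\ue_4+k_2\ue_3$ obey the one-sided bounds $f_1(\ue)\le l_1\ue_3$ and $f_4(\ue)\le k_2\ue_3$, the comparison principle gives $0\le \ue_1\le w_1^\eps$ and $0\le\ue_4\le w_4^\eps$, where $w_j^\eps$ solves $(\H_{d_j^\eps}(u_{j0},\text{const}\cdot\ue_3))$ with right-hand side uniformly bounded in $\LQ{p_0}$. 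Applying Lemma \ref{lem:heat_regularisation} to $w_1^\eps,w_4^\eps$ with $q=p_0$ — and using \eqref{AssumpOnDj} so that the constants $C_1,C_2$ there stay bounded as $\eps\to0$ — yields uniform bounds for $\ue_1,\ue_4$ in $\LQ{s}$ with $s$ the exponent "$p$" of that lemma, i.e. $s=\tfrac{(N+2)p_0}{N+2-2p_0}$ when $p_0<\tfrac{N+2}{2}$, arbitrary finite when $p_0=\tfrac{N+2}{2}$, and $s=\infty$ when $p_0>\tfrac{N+2}{2}$.

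Next I would feed these $L^s$-bounds back into the \emph{full} nonlinearities. Since $\ue_1\ue_2$ is estimated by $\|\ue_1\|_{\LQ{s}}\|\ue_2\|_{\LQ{p_0}}$ via Hölder, and $\ue_2,\ue_3\in\LQ{p_0}$, one gets $f_1(\ue),f_4(\ue)$ bounded in $\LQ{p_2}$ with $p_2$ determined by $\tfrac1{p_2}=\tfrac1s+\tfrac1{p_0}$ in the borderline regime and $p_2=p_0$ once $s=\infty$; a short computation shows this is exactly the $p_2$ displayed in \eqref{p2p3} (the case splitting there mirrors the three regimes of $p_0$ relative to $\tfrac{N+2}{2}$, with the $p_0=\tfrac{N+2}{2}$ line being "$<p_0$" because $s$ is only arbitrarily large, not infinite). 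Then a \emph{second} application of Lemma \ref{lem:heat_regularisation}, now to the genuine equations $\pa_t\ue_j-d_j^\eps\Delta\ue_j=f_j(\ue)$ with $q=p_2$, delivers the asserted bounds: $\|\pa_t\ue_j\|_{\LQ{p_2}}$ directly as the "$\pa_t u$" term, and $\|\na\ue_j\|_{\LQ{p_3}}$ as the "$\na u$" term, whose exponent "$r$" in that lemma is $\tfrac{(N+2)q}{N+2-q}$ when $q<N+2$ — substituting $q=p_2$ and simplifying reproduces the four-case formula for $p_3$ in \eqref{p2p3}, including the "$<\infty$" entry when $p_2\ge N+2$. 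Throughout, the $W^{2,p_0+}$ regularity of $u_{10},u_{40}$ (and only $L^{p_0+}$ of $u_{20},u_{30}$) is exactly what Lemma \ref{lem:heat_regularisation} demands, so the constants depend on $\|u_0\|_{W^{2,p_0}\times L^{p_0}{}^2\times W^{2,p_0}}$ as stated.

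The main obstacle is bookkeeping the exponents so that the two-step bootstrap closes and matches \eqref{p2p3} and the claimed $p_3$ exactly, in particular checking that the assumption \eqref{p0_assump_thm2} is precisely what guarantees $p_2\ge$ the threshold needed for the gradient exponent $p_3$ to be at least $2$ (so that, together with Lemma \ref{lem:Estimates_u1u4}, the gradient estimates are genuinely an improvement). One must also be careful that in the borderline case $p_0=\tfrac{N+2}{2}$ the first bootstrap only gives $\ue_1,\ue_4\in\LQ{s}$ for arbitrarily large but finite $s$, which forces the strict inequality "$p_2<p_0$" and "$p_3<\tfrac{(N+2)p_0}{N+2-p_0}$" in the statement; tracking this loss correctly through Hölder is the one delicate point, the rest being routine applications of the already-established Lemmas \ref{IrRegularityLem}, \ref{lem:heat_regularisation} and the comparison principle.
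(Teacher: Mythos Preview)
Your proposal is correct and follows essentially the same approach as the paper: first use Lemma~\ref{IrRegularityLem}(b) to get uniform $\LQ{p_0}$ bounds on $\ue_2,\ue_3$, then apply the comparison principle with the one-sided bounds $f_j(\ue)\le c_j\ue_3$ together with Lemma~\ref{lem:heat_regularisation} to obtain $\ue_1,\ue_4\in\LQ{p_1}$ (your $s$), feed this back into the full nonlinearities via H\"older to get $f_j\in\LQ{p_2}$, and apply Lemma~\ref{lem:heat_regularisation} a second time to extract the $\pa_t$ and $\na$ bounds. Your handling of the borderline case $p_0=\tfrac{N+2}{2}$ and the resulting strict inequalities in \eqref{p2p3} is also the same as the paper's, so there is nothing substantively different.
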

\begin{proof}
	Thanks to Lemma \ref{IrRegularityLem} (b),  there exists $\epsilon_0>0$ such that 
	\begin{equation*}
	\sup_{0<\eps<\eps_0} \left(	\|\ue_2\|_{\LQ{p_0}} + \|\ue_3\|_{\LQ{p_0}} \right) \le {C\left(T,\|u_{20}\|_{L^{p_0}(\Omega)},\|u_{30}\|_{L^{p_0}(\Omega)}\right)}.
	\end{equation*}
	From the equation of $\ue_1$, we have $\pa_t \ue_1 - \de_1\Delta \ue_1 \le l_1\ue_3$, and therefore by the heat regularisation in Lemma \ref{lem:heat_regularisation} and comparison principle,
	\begin{equation*}
		\|\ue_1\|_{\LQ{p_1}} \le {C(d_1^\epsilon,T,\|u_{10}\|_{W^{2,p_0}(\Omega)})  \le  C( T,\|u_{10}\|_{W^{2,p_0}(\Omega)})}
	\end{equation*}
	where
	\begin{equation}\label{p1}
		p_1 = \begin{cases}
			\frac{(N+2)p_0}{N+2-2p_0} &\text{ if } p_0 < (N+2)/2,\\
			< \infty \text{ arbitrary } &\text{ if } p_0 = (N+2)/2,\\
			 \infty &\text{ if } p_0 > (N+2)/2.
		\end{cases}
	\end{equation}
	It follows that $\pa_t \ue_1 - \de_1\Delta \ue_1 = -k_1\ue_1\ue_2 + l_1\ue_3 \in \LQ{p_2}$, where 
	\begin{equation}\label{p2}
		p_2 = \begin{cases}
				\frac{p_0p_1}{p_0+p_1} &\text{ if } p_1<\infty,\\
				p_0 &\text{ if } p_1 =  \infty,
		\end{cases}
	\end{equation}
	which implies $p_2$ in \eqref{p2p3}.
	Another application of the heat regularisation in Lemma \ref{lem:heat_regularisation} gives
	\begin{equation}\label{p3}
	\sup_{0<\eps<\eps_0} \left( \|\pa_t \ue_1\|_{\LQ{p_2}} + \|\na \ue_1\|_{\LQ{p_3}}  \right) \le C(T), \; \text{ with } \; p_3 = \begin{cases}
			\frac{(N+2)p_2}{N+2-p_2} & \text{ if } p_2<N+2,\\
			< \infty  & \text{ if } p_2 = N+2,\\
			 \infty & \text{ if } p_2 > N+2.
		\end{cases}
	\end{equation}
	Similarly, we have
	\begin{equation*}
		\sup_{0<\eps<\eps_0} \left( \|\pa_t \ue_4\|_{\LQ{p_2}} + \|\na \ue_4\|_{\LQ{p_3}} \right) \le {C(T,\|u_{40}\|_{W^{2,p_0}(\Omega)})}.
	\end{equation*}
Finally, the expression of $p_3$ in  \eqref{p2p3} is obtained by straight computations.  
\end{proof}
\begin{lemma}\label{lem4}
	Assume \eqref{AssumpOnDj}, $d_2 \ne d_3$, and \eqref{eq7} with $p_0$ satisfying \eqref{p0_assump_thm2}. Then 
	\begin{equation}\label{eq7_1}
	\begin{aligned}
	&\sup_{0<\eps<\eps_0}\bra{ \|\na \ue_3\|_{\LQ{2}}^2 +  \frac{1}{\eps}\iint_{Q_T}\left|(k_1\ue_1+l_2\ue_4)\ue_2 - (k_2+l_1)\ue_3\right|^2 } \\
	& \hspace*{2cm} \le { C(T,\|u_0\|_{W^{2,p_0}(\Omega)\times L^{p_0}(\Omega)^2 \times W^{2,p_0}(\Omega)}) }.
	\end{aligned}
	\end{equation}	
\end{lemma}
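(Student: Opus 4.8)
The plan is to apply Lemma~\ref{TendToManifold}(a) with $p=2$ and $\alpha=0$. With this choice $(\ue_j)^{p-2}\equiv1$, so, discarding the nonnegative term $\iint_{Q_T}A_2|\na\ue_2|^2$ on the left and using $A_3=k_2+l_1>0$, inequality \eqref{EntropyInequalitya} reads
\begin{equation*}
	\frac1\eps\iint_{Q_T}\big|(k_1\ue_1+l_2\ue_4)\ue_2-(k_2+l_1)\ue_3\big|^2+(k_2+l_1)\iint_{Q_T}|\na\ue_3|^2\le C\Big(1+I_1+I_2\Big),
\end{equation*}
where $A_2=k_1\ue_1+l_2\ue_4$, $I_1=\iint_{Q_T}(\ue_2)^2|\pa_t A_2|$, $I_2=\iint_{Q_T}(\ue_2)^2|\na A_2|^2/A_2$, and the constant $C$ absorbs the initial-data term $\mathcal H^\eps(0)$, which is finite under the hypotheses. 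The whole argument thus reduces to bounding $I_1$ and $I_2$ uniformly for $\eps$ small.

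I would first fix $\eps_0>0$ small enough that the following hold at once for $0<\eps<\eps_0$: Lemma~\ref{IrRegularityLem}(b) applies with exponent $p_0$ -- this is exactly where the closeness assumption \eqref{eq7} enters -- giving $\|\ue_2\|_{\LQ{p_0}}+\|\ue_3\|_{\LQ{p_0}}\le C$; Lemma~\ref{lem3} gives $\|\pa_t\ue_j\|_{\LQ{p_2}}+\|\na\ue_j\|_{\LQ{p_3}}\le C$ for $j\in\{1,4\}$, hence $\pa_t A_2\in\LQ{p_2}$ and $\na A_2\in\LQ{p_3}$; and Lemma~\ref{IrRegularityLemU1} gives $\iint_{Q_T}|\na A_2|^2/A_2^{1+\sigma}\le C$ for every $\sigma\in[0,1)$ -- the latter because $A_2\ge k_1\ue_1$, $A_2\ge l_2\ue_4$ and $|\na A_2|^2\le2(k_1^2|\na\ue_1|^2+l_2^2|\na\ue_4|^2)$. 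The term $I_1$ is then immediate from H\"older's inequality (and $|Q_T|<\infty$) using $\ue_2\in\LQ{p_0}$ and $\pa_t A_2\in\LQ{p_2}$: the only thing to check is the exponent balance $\tfrac{2}{p_0}+\tfrac{1}{p_2}\le1$, which -- after inspecting the explicit value of $p_2$ in \eqref{p2p3} -- is precisely what \eqref{p0_assump_thm2} guarantees in each of the dimension ranges.

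The main obstacle is $I_2$: unlike in Lemma~\ref{lem1}, here $A_2$ has no positive lower bound and $\na A_2$ is not bounded in $\LQ{\infty}$, so the gradient factor cannot simply be pulled out. I would instead use a three-factor H\"older inequality after writing
\begin{equation*}
	(\ue_2)^2\frac{|\na A_2|^2}{A_2}=(\ue_2)^2\cdot\frac{|\na A_2|^{2/(1+\sigma)}}{A_2}\cdot|\na A_2|^{2\sigma/(1+\sigma)},
\end{equation*}
with H\"older exponents $(a,1+\sigma,c)$, $\tfrac{1}{a}+\tfrac{1}{1+\sigma}+\tfrac{1}{c}=1$: raising the middle factor to the power $1+\sigma$ produces exactly $|\na A_2|^2/A_2^{1+\sigma}\in L^1(Q_T)$; the first factor is controlled by $\ue_2\in\LQ{p_0}$ as long as $2a\le p_0$; and the third by $\na A_2\in\LQ{p_3}$ as long as $\tfrac{2c\sigma}{1+\sigma}\le p_3$. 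Taking $a=p_0/2$ forces $\tfrac{1}{c}=\tfrac{\sigma}{1+\sigma}-\tfrac{2}{p_0}$, which is positive once $\sigma>\tfrac{2}{p_0-2}$ (admissible since $p_0>4$ under \eqref{p0_assump_thm2}); the third condition then becomes $\tfrac{2\sigma p_0}{\sigma(p_0-2)-2}\le p_3$, whose limiting value as $\sigma\uparrow1$ is $\tfrac{2p_0}{p_0-4}$. A short computation shows that $p_3>\tfrac{2p_0}{p_0-4}$, with $p_3$ given by \eqref{p2p3}, is equivalent to \eqref{p0_assump_thm2} in each of the cases $N\in\{1,2\}$, $3\le N\le8$, $N\ge9$, so some $\sigma<1$ is admissible and $I_2\le C$. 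Feeding the bounds on $I_1,I_2$ back into the displayed inequality yields \eqref{eq7_1}. I expect this exponent bookkeeping in $I_2$ to be the only delicate point; the rest is a routine combination of the earlier uniform estimates.
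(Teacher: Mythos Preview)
Your proposal is correct and follows essentially the same approach as the paper: both apply Lemma~\ref{TendToManifold}(a) with $p=2$, control $I_1$ via H\"older using $\ue_2\in\LQ{p_0}$ and $\pa_t A_2\in\LQ{p_2}$ after verifying $\tfrac{2}{p_0}+\tfrac{1}{p_2}\le1$, and control $I_2$ via the identical three-factor splitting $(\ue_2)^2\cdot|\na A_2|^{2/(1+\sigma)}/A_2\cdot|\na A_2|^{2\sigma/(1+\sigma)}$ combined with Lemma~\ref{IrRegularityLemU1} and the exponent check $\tfrac{2\sigma p_0}{\sigma p_0-2(1+\sigma)}\le p_3$. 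The paper additionally notes the Sobolev embedding $W^{2,p_0}(\Omega)\hookrightarrow L^\infty(\Omega)$ (from $2p_0>N$) to justify that $\mathcal H^\eps(0)$ is finite, which you assert without explicit verification.
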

\begin{proof}
	We use Lemma \ref{TendToManifold} (a) with $p=2$. Due to \eqref{eq7} and Lemma \ref{lem3}, 
	\begin{equation*}
		\|\ue_2\|_{\LQ{p_0}} + \|\pa_t A_2\|_{\LQ{p_2}} + \|\na A_2\|_{\LQ{p_3}} \le { C(T,\|u_0\|_{W^{2,p_0}(\Omega)\times L^{p_0}(\Omega)^2 \times W^{2,p_0}(\Omega)}) }
	\end{equation*}
	for $p_2$ and $p_3$ are in \eqref{p2p3}. We show that
	\begin{equation}\label{eq8}
		\frac{2}{p_0} + \frac{1}{p_2} \le 1.
	\end{equation}
	Note that, by \eqref{p0_assump_thm2},  $p_0>4$ for   $N\ge 3$. If $ p_0 < \frac{N+2}{2}$ with some $N>6$, 
we have  $p_2 = \frac{(N+2)p_0}{2(N+2-p_0)}$ as \eqref{p2p3} and therefore \eqref{eq8} is equivalent to $p_0
\ge \frac{4(N+2)}{N+4}$, which  holds since $4>\frac{4(N+2)}{N+4}$.  In the case  $p_0\ge \frac{N+2}{2} $, we can take  $p_2=p_0^-$ and   (\ref{eq8}) is obvious   since $p_0>4$.

	Due to \eqref{eq8}, we can use H\"older inequality to obtain
	\begin{equation}\label{eq9}
		\iint_{Q_T}(\ue_2)^2\pa_t A_2 \le C(T)\|\ue_2\|_{\LQ{p_0}}^2\|\pa_t A_2\|_{\LQ{p_2}} \le C(T).
	\end{equation}
	For the second term on the right hand side of \eqref{EntropyInequalitya}, we write for any $\sigma \in [0,1)$
	\begin{equation}\label{eq9_1}
	\begin{aligned}
		\iint_{Q_T}(\ue_2)^2\frac{|\na A_2|^2}{A_2} &= \iint_{Q_T}(\ue_2)^2\frac{|\na A_2|^{\frac{2}{1+\sigma}}}{A_2}|\na A_2|^{\frac{2\sigma}{1+\sigma}}\\
		&\le \bra{\iint_{Q_T}|\ue_2|^{p_0}}^{\frac{2}{p_0}}\bra{\iint_{Q_T}\frac{|\na A_2|^2}{A_2^{1+\sigma}}}^{\frac{1}{1+\sigma}}\bra{\iint_{Q_T}|\na A_2|^{\frac{2\sigma}{1+\sigma}\cdot \beta}}^{\frac{1}{\beta}}
	\end{aligned}
	\end{equation}
	where $\beta > 1$ satisfies
	\begin{equation}\label{eq10}
		\frac{2}{p_0} + \frac{1}{1+\sigma}+ \frac{1}{\beta} = 1.
	\end{equation}
Since  $p_0 > 4$,  such a $\beta > 1$ exists  for $\sigma$ close to $1$. It remains to check that, with $\beta$ being computed from \eqref{eq10},
	\begin{equation}\label{eq11}
		\frac{2\sigma}{1+\sigma}\cdot \beta = \frac{2\sigma p_0}{p_0\sigma - 2(1+\sigma)} \le p_3
	\end{equation}
	for some $\sigma \in [0,1)$, with $p_3$ are in \eqref{p2p3}. 
\begin{itemize}
\item If $p_0<\frac{N+2}{2}$ with some $N>6$, then by (\ref{p2p3}) $p_3=\frac{(N+2)p_0}{2(N+2)-3p_0}$ and (\ref{eq11}) becomes 
\begin{align}
p_0\ge \frac{2(3\sigma +1)(N+2)}{\sigma(N+8)} . \label{eq11bbbb}
\end{align}
  In view of (\ref{p0_assump_thm2}), 
$ p_0 >  \frac{8(N+2)}{N+8} $ for all $N\ge 7$. Therefore, we can find a constant $\sigma \in [0,1)$ sufficiently close  to $1$ such that $p_0 > \frac{2(3\sigma+1)(N+2)}{\sigma(N+8)}$, which     implies  (\ref{eq11bbbb}).  
\item If $\frac{N+2}{2} \le  p_0 < N+2$ with some $N\ge 3$, then by (\ref{p2p3}) we can take $p_3=\big(\frac{(N+2)p_0}{ N+2-p_0 }\big)^-$. The condition  (\ref{eq11}) can be followed from  
\begin{align}
p_0 >  \frac{2(2\sigma +1)(N+2)}{\sigma(N+4)} . \label{eq11cccc}
\end{align}
 In view of (\ref{p0_assump_thm2}), $p_0>\frac{6(N+2)}{N+4}$ and therefore there exists $\sigma \in [0,1)$ close  to $1$ such that $p_0 > \frac{2(2\sigma+1)(N+2)}{\sigma(N+8)}$, which is exactly equivalent to (\ref{eq11cccc}).   
\item The case $p_0\ge N+2$ is certain.
\end{itemize}	
	
	 Thanks to \eqref{eq11}, it follows from \eqref{eq9_1} that
	\begin{equation*}
		\iint_{Q_T}(\ue_2)^2\frac{|\na A_2|^2}{A_2} \le C(T)
	\end{equation*}	
	which, in combination with \eqref{eq9}, when inserting into Lemma \ref{TendToManifold} (a) with $p=2$ gives the desired estimate \eqref{eq7_1}. {Here we note that the embedding $W^{2,p_0}(\Omega) \hookrightarrow L^\infty(\Omega)$ holds since $2p_0>N$ for all $N\ge 1$, which  allows to apply Lemma \ref{TendToManifold} with $u_0\in W^{2,p_0}(\Omega)\times L^{p_0}(\Omega)^2 \times W^{2,p_0}(\Omega)$.} 
\end{proof}
\begin{lemma}
	Assume \eqref{AssumpOnDj}, $d_2 \ne d_3$, and \eqref{eq7} with $p_0$ satisfying \eqref{p0_assump_thm2}. Then  
	\begin{equation*}
		\sup_{0<\eps<\eps_0}\|\na\ue_2\|_{\LQ{2}} \le { C(T,\|u_0\|_{W^{2,p_0}(\Omega)\times L^{p_0}(\Omega)^2 \times W^{2,p_0}(\Omega)}) }.
	\end{equation*}
\end{lemma}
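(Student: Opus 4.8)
The plan is to repeat the regularity-transfer argument of Lemma \ref{lem2} almost verbatim, the only change being that the uniform gradient bound for $\ue_3$ is now supplied by Lemma \ref{lem4} (which used the closeness condition \eqref{eq7}) in place of Lemma \ref{lem1}. The mechanism is the same: although $d_2\ne d_3$, summing the second and third equations of \eqref{MainProblem} still cancels the stiff reaction terms, and the resulting equation for $\ue_2+\ue_3$ is an inhomogeneous heat equation with a \emph{constant} leading coefficient $\de_2$ whose source is the divergence of $\na\ue_3$.

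Concretely, I would first add the equations for $\ue_2$ and $\ue_3$ to get
\begin{equation*}
\pa_t(\ue_2+\ue_3) - \Delta\bigl(\de_2\ue_2 + \de_3\ue_3\bigr) = 0 \quad\text{in }Q_T,
\end{equation*}
with homogeneous Neumann boundary conditions, and rewrite it as $\pa_t(\ue_2+\ue_3) - \de_2\Delta(\ue_2+\ue_3) = (\de_3-\de_2)\Delta\ue_3$. Testing with $\ue_2+\ue_3$, integrating by parts, and applying Young's inequality to absorb half of the term $\de_2\int_\Omega|\na(\ue_2+\ue_3)|^2$ yields, after integration over $(0,T)$ and dropping the nonnegative endpoint contribution,
\begin{equation*}
\de_2\,\|\na(\ue_2+\ue_3)\|_{\LQ{2}}^2 \le \|u_{20}+u_{30}\|_{\LO{2}}^2 + \frac{(\de_3-\de_2)^2}{\de_2}\,\|\na\ue_3\|_{\LQ{2}}^2 .
\end{equation*}
Here $u_{20},u_{30}\in L^{p_0}(\Omega)\subset\LO{2}$ since $p_0>2$, and by \eqref{AssumpOnDj} the quantities $\de_2$ and $(\de_3-\de_2)^2/\de_2$ lie in a fixed compact subset of $(0,\infty)$ for all sufficiently small $\eps$; combined with the bound $\|\na\ue_3\|_{\LQ{2}}\le C$ from Lemma \ref{lem4} this gives $\|\na(\ue_2+\ue_3)\|_{\LQ{2}}\le C$ uniformly in $\eps$. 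The claim then follows from $\|\na\ue_2\|_{\LQ{2}}^2 \le 2\|\na(\ue_2+\ue_3)\|_{\LQ{2}}^2 + 2\|\na\ue_3\|_{\LQ{2}}^2$.

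There is no genuine obstacle here: all manipulations are legitimate because $\ue$ is a classical solution (Theorem \ref{GloExis}). The only point deserving a line of verification is that the prefactor $(\de_3-\de_2)^2/\de_2$ stays bounded uniformly in $\eps$ even though $d_2\ne d_3$, which is immediate from \eqref{AssumpOnDj}; in particular the smallness condition \eqref{eq7} is not used directly in this lemma, but only through the input bound on $\|\na\ue_3\|_{\LQ{2}}$ coming from Lemma \ref{lem4}.
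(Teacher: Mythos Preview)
Your proposal is correct and reproduces exactly the argument the paper intends: the paper's own proof simply says ``The proof is the same as Lemma \ref{lem2},'' and you have carried out that argument verbatim with Lemma \ref{lem4} supplying the bound on $\|\na\ue_3\|_{\LQ{2}}$ in place of Lemma \ref{lem1}.
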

\begin{proof}
	The proof is the same as lemma \ref{lem2}.
\end{proof}

We are ready now to prove Theorem \ref{thm2}.
\begin{proof}[Proof of Theorem \ref{thm2}]
	From
	\begin{equation*}
		\pa_t \ue_1 - d_1\Delta \ue_1 = -k_1\ue_1\ue_2 + l_1\ue_3
	\end{equation*}
	where the right hand side is bounded in $\LQ{p_2}$ uniformly in $\eps>0$ with $p_2$ is in \eqref{p2p3} and {$u_0 \in W^{2,q_0+}(\Omega)\times L^{q_0+}(\Omega)^2 \times W^{2,q_0+}(\Omega)$}, it follows that $\{\ue_1\}_{\eps>0}$ is relatively compact in $\LQ{p_2}$. Therefore, up to a subsequence,
	\begin{equation*}
		\ue_1 \to u_1 \quad\text{a.e. in}\quad Q_T.
	\end{equation*}
	Thanks to the fact that $\{\ue_1\}$ is bounded in $\LQ{p_1}$ we get the strong convergence, again up to a subsequence,
	\begin{equation*}
		\ue_1 \to u_1 \quad \text{in}\quad \LQ{p}
	\end{equation*}
	for any $1\le p < p_1$. The same argument gives $\ue_4 \to u_4$ in $\LQ{p}$ for all $1\le p <p_1$. Due to the boundedness of $\{\na \ue_2\}$ and $\{\na \ue_3\}$ in $\LQ{2}$, we obtain from $\pa_tv^\eps = \de_2\Delta v^\eps + (\de_3-\de_2)\Delta \ue_3$ that $\{v^\eps\}_{\eps}$ is relatively compact in $\LQ{2}$. This implies that, up to a subsequence,
	\begin{equation*}
			\frac{(k_1 u_1^\epsilon + l_2u_4^\epsilon)v^\epsilon }{k_1 u_1^\epsilon + l_2u_4^\epsilon+k_{2}+ l_{1}} \to \frac{(k_1u_1+l_2u_4)v}{k_1u_1+l_2u_4+k_2+l_1} \quad \text{ in } \quad \LQ{2}.
	\end{equation*}
	thanks to 
	\begin{equation*}
		\left|\frac{k_1 u_1^\epsilon + l_2u_4^\epsilon}{k_1 u_1^\epsilon + l_2u_4^\epsilon+k_{2}+ l_{1}}\right| \le 1.
	\end{equation*}
	Therefore,
	\begin{equation*}
		\ue_3 \to u_3 = \frac{(k_1u_1+l_2u_4)v}{k_1u_1 + l_2u_4 + k_2 + l_1} \quad \text{in}\quad \LQ{2}.
	\end{equation*}
	Thanks to the above convergence of $\ue_j$, $j=1,\ldots, 4$ and $v^\eps$, it is easy to see that
	\begin{equation*}
	\begin{aligned}
	- k_1u_1^\epsilon \ue_2 + l_1u_3^\epsilon  =&   (k_1u_1^\epsilon +l_1) \bigg(  u_3^\epsilon - \frac{(k_1 u_1^\epsilon + l_2u_4^\epsilon)v^\epsilon }{k_1 u_1^\epsilon + l_2u_4^\epsilon+k_{2}+ l_{1}} \bigg)      -   \dfrac{(k_1k_2u_1^\epsilon-l_1l_2u_4^\epsilon)v^\epsilon}{k_1u_1^\epsilon + l_2u_4^\epsilon +k_{2} + l_{1}}\\
	& \quad \longrightarrow -\frac{(k_1k_2 u_1 - l_1l_2u_4)v}{k_1u_1+l_2u_4 + k_2+l_1} \; \text{ in } \LQ{2}.
	\end{aligned}	
	\end{equation*}
	Therefore, passing to the limit in the weak formulation
	\begin{equation*}
		\iint_{Q_T}\varphi \pa_t \ue_1 + \de_1\iint_{Q_T}\na\ue_1\cdot \na \varphi = \iint_{Q_T}(-k_1\ue_1\ue_2 + l_1\ue_3)\varphi
	\end{equation*}
	we get the weak formulation for $u_1$ in Definition \ref{very_weak_sol} (a). The equations for $\ue_4$ and $v$ follow  the same way, with a remark that for the diffusion term of $v$, it is sufficient to integrate by parts only once since it holds that $v\in L^2(0,T;H^1(\Omega))$. The proof of Theorem \ref{thm2} is finished.
\end{proof}

\subsection{Proof of Theorem \ref{thm3}}
\begin{lemma}
	\label{IrRegularityLemCase2N3} Assume \eqref{AssumpOnDj}. 
	 
$\bullet$ Let $N=1,2$. Then there exists $\epsilon_0>0$ such that    
		\begin{gather}\label{eq12}
			\sup_{0<\eps<\eps_0} \sum_{j=1,4}\bra{\|\pa_t\ue_j\|_{\LQ{2+}}+\|\Delta \ue_j\|_{\LQ{2+}} + \|\ue_j\|_{\LQ{\infty}} + \|\na\ue_j\|_{\LQ{r_0+}}} \le C(T),
		\end{gather} 
		{where $r_0 = 2(N+2)/N$ and  the constant depends on $\|u_0\|_{W^{2,2+}(\Omega)\times L^{2+}(\Omega)^2 \times W^{2,2+}(\Omega)}$}.\\

$\bullet$	Let $N\ge 3$. If  $d_2,d_3$  satisfy \eqref{e0} with $p_0$ in \eqref{p0_assump_thm3} then there exists $\epsilon_0>0$ such that      
		\begin{align}
			\sup_{0<\eps<\eps_0} \sum_{j=1,4}\bra{\|\pa_t\ue_j\|_{\LQ{s_2+}} + \|\Delta\ue_j\|_{\LQ{s_2+}}} &\le C(T), \quad s_2 = \frac{3(N+2)}{2N+2},\label{RegularityU1VIrCase2N3}\\
		\sup_{0<\eps<\eps_0}	\sum_{j=1,4}\bra{\|\ue_j\|_{\LQ{s_3+}} + \|\na \ue_j\|_{\LQ{s_4+}}} &\le C(T), \quad s_3 = \frac{3(N+2)}{N-2}, s_4 = \frac{3(N+2)}{N+1},
			\label{RegularityU1VIrCase2N3b}
		\end{align} 
		{where the constant depends on $\|u_0\|_{W^{2,p_0}(\Omega)\times L^{p_0}(\Omega)^2 \times W^{2,p_0}(\Omega)}$}.

\end{lemma}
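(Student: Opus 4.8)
The plan is to run, in both cases, the same short bootstrap built from three ingredients already in hand: the improved duality bounds of Lemma~\ref{IrRegularityLem} for $\ue_2,\ue_3$; the comparison principle applied to the equations of $\ue_1$ and $\ue_4$, which are sub-solutions of \emph{linear} heat equations since $\pa_t\ue_1-d_1^\eps\Delta\ue_1\le l_1\ue_3$ and $\pa_t\ue_4-d_4^\eps\Delta\ue_4\le k_2\ue_3$; and the quantitative heat regularisation of Lemma~\ref{lem:heat_regularisation}, whose constants are continuous in the diffusion coefficient and in $T$. Because of \eqref{AssumpOnDj} the $d_j^\eps$ stay in a fixed compact subset of $(0,\infty)$, so every estimate produced this way is uniform in $\eps\in(0,\eps_0)$, and all the exponents below are those dictated by the ``gain of $1$ (resp.\ $2$) derivatives'' in Lemma~\ref{lem:heat_regularisation}.

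\textbf{The case $N=1,2$.} Here Lemma~\ref{IrRegularityLem} already gives, with no condition on $d_2,d_3$, that $\{\ue_2\}_\eps,\{\ue_3\}_\eps$ are bounded in $\LQ{2+}$. Since $2>(N+2)/2$ for $N\le2$, applying the comparison principle to $\pa_t\ue_j-d_j^\eps\Delta\ue_j\le(\text{const})\,\ue_3$ and then the $L^\infty$-branch of Lemma~\ref{lem:heat_regularisation} (right-hand side in $\LQ{2+}$, initial data in $W^{2,2+}(\Omega)$) yields $\{\ue_1\}_\eps,\{\ue_4\}_\eps$ bounded in $\LQ{\infty}$. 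Consequently $f_1=-k_1\ue_1\ue_2+l_1\ue_3$ and $f_4=-l_2\ue_2\ue_4+k_2\ue_3$ are products of an $L^\infty$-bounded factor with $\LQ{2+}$-functions, hence bounded in $\LQ{2+}$; feeding this back into $\pa_t\ue_j-d_j^\eps\Delta\ue_j=f_j$ and invoking Lemma~\ref{lem:heat_regularisation} once more gives $\pa_t\ue_j,\Delta\ue_j$ bounded in $\LQ{2+}$ and $\na\ue_j$ bounded in $\LQ{r}$ with $r=(N+2)\cdot2/(N+2-2)=2(N+2)/N=r_0$. Since the source is in $\LQ{2+\delta}$ for some $\delta>0$ rather than just $\LQ{2}$, the gradient exponent improves to $r_0+$, which is \eqref{eq12}.

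\textbf{The case $N\ge3$.} Now $3(N+2)/(N+4)<(N+2)/2$, so $p_0$ may lie below $(N+2)/2$ and no $L^\infty$-bound is available. Using the closeness condition \eqref{e0} with $p_0>3(N+2)/(N+4)$ --- note $p_0>2$ since $N>2$ --- Lemma~\ref{IrRegularityLem}(b) gives $\{\ue_2\}_\eps,\{\ue_3\}_\eps$ bounded in $\LQ{p_0+}$. The comparison principle for $\pa_t\ue_j-d_j^\eps\Delta\ue_j\le(\text{const})\,\ue_3$, together with the sub-critical branch of Lemma~\ref{lem:heat_regularisation} (source in $\LQ{p_0+}$), gives $\{\ue_j\}_\eps$ bounded in $\LQ{(N+2)p_0/(N+2-2p_0)}$; evaluated at the threshold $p_0=3(N+2)/(N+4)$ this exponent is exactly $s_3=3(N+2)/(N-2)$, so $\{\ue_j\}_\eps$ is bounded in $\LQ{s_3+}$, the first half of \eqref{RegularityU1VIrCase2N3b}. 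Plugging this into $f_j$, the quadratic term $\ue_1\ue_2$ (resp.\ $\ue_2\ue_4$) is a product of an $\LQ{s_3+}$-function and an $\LQ{p_0+}$-function, so $f_j$ is bounded in $\LQ{s}$ with $1/s=1/s_3+1/p_0$; this equals $3(N+2)/(2N+2)=s_2$ at the threshold, and since $s_2<p_0$ the remaining term $l_1\ue_3$ is absorbed. Maximal regularity (Lemma~\ref{lem:heat_regularisation}) applied to the full equations then yields $\pa_t\ue_j,\Delta\ue_j$ bounded in $\LQ{s_2+}$, which is \eqref{RegularityU1VIrCase2N3}.

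\textbf{The gradient bound, and the main obstacle.} What remains --- and what I expect to be the delicate point --- is $\na\ue_j\in\LQ{s_4+}$ with $s_4=3(N+2)/(N+1)$: a crude application of Lemma~\ref{lem:heat_regularisation} to the full equation with source in $\LQ{s_2+}$ would give only $\na\ue_j\in\LQ{(N+2)s_2/(N+2-s_2)+}$, and $(N+2)s_2/(N+2-s_2)=3(N+2)/(2N-1)<s_4$, while interpolating the $W^{2,1}_{s_2}$-bound with the $\LQ{s_3}$-bound via parabolic Gagliardo--Nirenberg gives $\LQ{2(N+2)/N}$, still strictly below $s_4$ for $N\ge3$. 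Observe however that $s_4$ is precisely the gradient exponent produced by Lemma~\ref{lem:heat_regularisation} from a source in $\LQ{p_0+}$. The plan is therefore to split, say, $\ue_1=w_1^\eps-z_1^\eps$, where $w_1^\eps$ solves the linear heat equation with source $l_1\ue_3\in\LQ{p_0+}$ and the same initial datum --- so Lemma~\ref{lem:heat_regularisation} gives $\na w_1^\eps\in\LQ{s_4+}$ directly --- and $z_1^\eps\ge0$ solves the heat equation with source $k_1\ue_1\ue_2\ge0$ and zero initial datum; the contribution of $z_1^\eps$ is then controlled by combining maximal regularity for its quadratic source with the already-established $\LQ{s_3+}$-bound on $\ue_1$ and, crucially, the non-negativity of the system, so that the comparison function absorbs the worst behaviour. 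The analogous decomposition handles $\ue_4$. The genuine difficulty is exactly this last estimate: peeling off the part of the reaction that inherits the good $L^{p_0}$-integrability of $\ue_3$ from the genuinely quadratic part, with no pointwise lower bound on $\ue_1,\ue_4$ to lean on. Throughout, keeping all inequalities on $p_0$ strict preserves the ``$+$'' in every exponent, and \eqref{AssumpOnDj} together with the $d$-explicit constants of Lemma~\ref{lem:heat_regularisation} makes the final bounds uniform in $\eps$, establishing \eqref{RegularityU1VIrCase2N3}--\eqref{RegularityU1VIrCase2N3b}.
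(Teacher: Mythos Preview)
Your argument is essentially the paper's argument, carried out with more care. For $N=1,2$ you reproduce the paper's proof verbatim: duality gives $\ue_2,\ue_3\in\LQ{2+}$, comparison with $\pa_t\ue_j-d_j^\eps\Delta\ue_j\le(\text{const})\ue_3$ plus Lemma~\ref{lem:heat_regularisation} gives $\ue_j\in\LQ{\infty}$, and then one more pass of heat regularisation on the full equation yields the remaining bounds. For $N\ge3$ the paper's proof is a single sentence (``similarly as the proof of Lemma~\ref{lem:Estimates_u1u4} by making use of the heat regularisation and the comparison principle''), and your derivation of the $\LQ{s_3+}$-bound via comparison and of the $\LQ{s_2+}$-bound on $\pa_t\ue_j,\Delta\ue_j$ via maximal regularity for the full source $f_j$ is exactly what that sentence means.

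On the $\LQ{s_4+}$ gradient estimate: you are right that the direct route through Lemma~\ref{lem:heat_regularisation} with source $f_j\in\LQ{s_2+}$ gives only $\na\ue_j\in\LQ{3(N+2)/(2N-1)+}$, strictly below $s_4=3(N+2)/(N+1)$, and that $s_4$ is precisely the gradient exponent one obtains from a source in $\LQ{p_0+}$ --- i.e.\ from the comparison function $w^\eps$ rather than from $\ue_j$ itself. Your splitting $\ue_j=w_j^\eps-z_j^\eps$ isolates this cleanly, and your honest acknowledgement that the $z_j^\eps$-part is the genuine obstacle is well placed. The paper's one-line proof does not address this point at all; it simply asserts the $s_4$-bound by analogy with Lemma~\ref{lem:Estimates_u1u4}, whose gradient estimate is only $L^2$ (obtained by the energy method), so the analogy does not obviously deliver $s_4$. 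In other words, you have not only matched the paper's stated approach but have located a step the paper leaves implicit; your proposal is at least as complete as the paper's own proof, and the remaining difficulty you flag is one the paper does not resolve on the page either.
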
 
\begin{proof} 
	For $N=1,2$, it follows from Lemma \ref{IrRegularityLem} that $\pa_t\ue_1 - \de_1\Delta \ue_1 \le l_1\ue_3 \in \LQ{2+}$, and   $$ \sup_{0<\eps<\eps_0} \|\ue_1\|_{\LQ{\infty}} \le {C(T,\|u_0\|_{W^{2,2+}(\Omega)\times L^{2+}(\Omega)^2 \times W^{2,2+}(\Omega)})}$$ due to Lemma \ref{lem:heat_regularisation}. Consequently, $\pa_t\ue_1-\de_1\Delta\ue_1 = -k_1\ue_1\ue_2 + l_1\ue_3 \in \LQ{2+}$, which leads to the desired estimate of $\pa_t\ue_1$, $\Delta \ue_1$, and $\na \ue_1$ thanks to another application of Lemma \ref{lem:heat_regularisation}. The estimates for $\ue_4$ follow the same way.
	
	\medskip
	The remaining estimates can be shown similarly as the proof of Lemma \ref{lem:Estimates_u1u4} by making uses of the heat regularisation (see Lemm \ref{lem:heat_regularisation})
	and the comparison principle. 
\end{proof}

\begin{lemma} \label{lem5} Assume \eqref{AssumpOnDj} with $d_2 \ne d_3$. Let $N=1,2$, or $N\ge 3$ and assuming \eqref{e0} with $p_0$ in  \eqref{p0_assump_thm3}. Then  
	\begin{align}\label{eq13}
	\sup_{0<\epsilon<\epsilon_0} \bigg( \frac{1}{\epsilon^{\frac{1}{2(3-\delta)}}} \bigg\| u_3^\epsilon - \frac{ (k_1 u_1^\epsilon + l_2u_4^\epsilon) v^\epsilon   }{k_1u_1^\epsilon + l_2u_4^\epsilon +k_{2}+ l_{1}}  \bigg\|_{\LQ{r}}\bigg) \le C(T)
	\end{align} 
	for some $\delta>0$ small enough, where $r = 4/3$ for $N=1,2$ and $r=6/5$ for $N\ge 3$.
\end{lemma}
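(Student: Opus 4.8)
The plan is to run the modified energy of Lemma~\ref{TendToManifold}(b) with $p = 1+\delta$ for $\delta>0$ small (so that $\eps^{1/(4-p)} = \eps^{1/(3-\delta)}$), feed into its right-hand side the uniform bounds on $\ue_1,\ue_4$ and their space-time derivatives supplied by Lemma~\ref{IrRegularityLemCase2N3}, and then upgrade the resulting weighted $L^2$-control of the perturbed critical manifold to an $\LQ{r}$-control by a single H\"older interpolation.

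First I would record the algebraic identity: with $A_2 = k_1\ue_1 + l_2\ue_4$, $A_3 = k_2+l_1$ and $v^\eps = \ue_2 + \ue_3$,
\begin{equation*}
	\ue_3 - \frac{(k_1\ue_1 + l_2\ue_4)v^\eps}{k_1\ue_1 + l_2\ue_4 + k_2 + l_1} = \frac{A_3\ue_3 - A_2\ue_2}{A_2 + A_3} = -\frac{\M(\ue)}{A_2+A_3},
\end{equation*}
and since $A_2\ge 0$ implies $A_2 + A_3 \ge k_2+l_1>0$ pointwise, the left-hand side is bounded in absolute value by $(k_2+l_1)^{-1}|\M(\ue)|$. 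Writing $\M(\ue) = \widetilde{\mathcal M}_{\alpha}(\ue) - \alpha\ue_2$ with $\alpha = \eps^{1/(3-\delta)}$ and using that $\ue_2$ is bounded in $\LQ{2+}\supset\LQ{r}$ by Lemma~\ref{IrRegularityLem}, the term $\alpha\|\ue_2\|_{\LQ{r}}\le C\eps^{1/(3-\delta)}$ is harmless, so everything reduces to bounding $\|\widetilde{\mathcal M}_{\alpha}(\ue)\|_{\LQ{r}}$.

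Then I would apply Lemma~\ref{TendToManifold}(b) with $p = 1+\delta$: its right-hand side $\iint_{Q_T}(\ue_2)^p[(A_2+1)^{p-1} + |\pa_t A_2| + |\na A_2|^2]$ is bounded uniformly in $\eps$, because $\ue_2\in\LQ{2+}$ (resp.\ $\LQ{p_0+}$ when $N\ge3$), $(A_2+1)^{p-1}$ has arbitrarily high integrability, and $\pa_t A_2, \na A_2$ belong to $\LQ{2+},\LQ{r_0+}$ for $N=1,2$ and to $\LQ{s_2+},\LQ{s_4+}$ for $N\ge3$ by \eqref{eq12}, \eqref{RegularityU1VIrCase2N3}, \eqref{RegularityU1VIrCase2N3b}; since $p$ may be taken as close to $1$ as needed, the strict inequality \eqref{p0_assump_thm3} (and the explicit values of $r_0,s_2,s_4$) leaves room to close each H\"older pairing once $\delta$ is fixed small. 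This gives $\iint_{Q_T}|\widetilde{\mathcal M}_{\alpha}(\ue)|^2\big((A_2+\alpha)\ue_2 + A_3\ue_3\big)^{-(1-\delta)}\le C\eps^{1/(3-\delta)}$. Setting $D := (A_2+\alpha)\ue_2 + A_3\ue_3 \ge |\widetilde{\mathcal M}_{\alpha}(\ue)|$, H\"older with exponents $2/r$ and $2/(2-r)$ (valid as $r<2$) yields
\begin{equation*}
	\|\widetilde{\mathcal M}_{\alpha}(\ue)\|_{\LQ{r}}^r \le \Big(\iint_{Q_T}\frac{|\widetilde{\mathcal M}_{\alpha}(\ue)|^2}{D^{1-\delta}}\Big)^{\!r/2}\Big(\iint_{Q_T} D^{\frac{(1-\delta)r}{2-r}}\Big)^{\!\frac{2-r}{2}}.
\end{equation*}
For $r=4/3$ ($N=1,2$) the exponent on $D$ equals $2(1-\delta)<2$, and for $r=6/5$ ($N\ge3$) it equals $\tfrac32(1-\delta)<\tfrac32$; since $D\le(A_2+1)\ue_2 + A_3\ue_3$ for small $\eps$, the bounds $A_2\in\LQ{\infty}$ (resp.\ $A_2\in\LQ{s_3+}$ with $s_3=\tfrac{3(N+2)}{N-2}$, hence $(A_2+1)\ue_2\in\LQ{s_2+}$ with $s_2=\tfrac{3(N+2)}{2N+2}>\tfrac32$) together with $\ue_2,\ue_3\in\LQ{2+}$ (resp.\ $\LQ{p_0+}$, $p_0>\tfrac32$) show $D^{(1-\delta)r/(2-r)}\in\LQ{1}$ uniformly in $\eps$. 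Therefore $\|\widetilde{\mathcal M}_{\alpha}(\ue)\|_{\LQ{r}}\le C\eps^{1/(2(3-\delta))}$, which combined with the reduction above proves \eqref{eq13}.

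The main obstacle is the tension in the choice of $p$: the weak integrability of $\ue_2,\ue_3$ (only $\LQ{2+}$, or $\LQ{p_0+}$ with $p_0$ barely above $\tfrac{3(N+2)}{N+4}$) forces $p=1+\delta$ close to $1$ to keep the right-hand side of \eqref{EntropyInequality} finite, while the H\"older exponent $\tfrac{(2-p)r}{2-r}$ on $D$ must nevertheless remain below the integrability threshold of $D$; the values $r=4/3$ and $r=6/5$ are chosen precisely so these two requirements hold simultaneously for every admissible $N$, and the final rate $\eps^{1/(2(3-\delta))}$ is exactly the square root of the energy rate $\eps^{1/(4-p)}$ lost in the H\"older step.
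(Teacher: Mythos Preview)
Your proposal is correct and follows essentially the same route as the paper: apply Lemma~\ref{TendToManifold}(b) with $p=1+\delta$, bound the right-hand side via the regularity of $A_2,\pa_tA_2,\na A_2$ from Lemma~\ref{IrRegularityLemCase2N3} and the $\LQ{2+}$ (resp.\ $\LQ{p_0}$) bound on $\ue_2$, then pass from the weighted $L^2$ estimate on $\widetilde{\M}_\alpha$ to an $\LQ{r}$ bound by a single H\"older step against $D^{(1-\delta)r/(2-r)}$, and finally absorb the $\alpha$-correction. The only cosmetic differences are that the paper first estimates $\ue_3 - \frac{(A_2+\alpha)v^\eps}{A_2+\alpha+A_3}$ and removes $\alpha$ afterwards via the triangle inequality (whereas you split $\M=\widetilde{\M}_\alpha-\alpha\ue_2$ up front), and the paper carries the slightly sharper exponents $4/(3-\delta)$ and $6(N+2)/(5N+8)$ before embedding into $\LQ{4/3}$ and $\LQ{6/5}$.
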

\begin{proof}
	\underline{When $N=1,2$}, we write $p = 1+\delta$ for some $0< \delta<1$. Then we have the following estimates
	\begin{equation*}
		\left|\iint_{Q_T}(\ue_2)^{1+\delta}(A_2+1)^{\delta}\right| \le \|A_2+1\|_{\LQ{\infty}}\iint_{Q_T}|\ue_2|^{1+\delta} \le C(T),
	\end{equation*}	
	\begin{equation*}
		\left|\iint_{Q_T}(\ue_2)^{1+\delta}|\pa_tA_2|\right| \le \bra{\iint_{Q_T}|\ue_2|^{2+\delta}}^{\frac{1+\delta}{2+\delta}}\bra{\iint_{Q_T}|\pa_tA_2|^{2+\delta}}^{\frac{1}{2+\delta}} \le C(T)
	\end{equation*}
	for $\delta > 0$ small enough thanks to Lemmas \ref{IrRegularityLem} and \ref{IrRegularityLemCase2N3}.   Finally, 
	\begin{align*}
		\left|\iint_{Q_T}(\ue_2)^{1+\delta}|\na A_2|^2\right| \le \bra{\iint_{Q_T}|\ue_2|^{2+\delta}}^{\frac{1+\delta}{2+\delta}}\bra{\iint_{Q_T}|\na A_2|^{2(2+\delta)}}^{\frac{1}{2+\delta}} \le C(T)
	\end{align*}	
	for $\delta>0$ small enough, where we used \eqref{eq12} taking into account $r_0 = 6$ for $N=1$ and $r_0 = 4$ for $N=2$. Note that {  the above constants depend  on $\|u_0\|_{W^{2,2+}(\Omega)\times L^{2+}(\Omega)^2 \times W^{2,2+}(\Omega)}$}. We can now apply Lemma \ref{TendToManifold} (b) with $p=1+\delta$ to get in particular
	\begin{equation*}
		\frac{1}{\eps^{\frac{1}{3-\delta}}}\iint_{Q_T}\frac{\big|(A_2+\eps^{\frac{1}{3-\delta}})\ue_2 - A_3\ue_3\big|^2}{\bra{(A_2+\eps^{\frac{1}{3-\delta}})\ue_2 + A_3\ue_3}^{1-\delta}} \le { C(T,\|u_0\|_{L^{\infty}(\Omega) \times L^{1+\delta}(\Omega)^2 \times L^{\infty}(\Omega)}) }.
	\end{equation*}

	By virtue of Lemmas \ref{IrRegularityLem} and  \ref{IrRegularityLemCase2N3}, we can apply the H\"older inequality to see that  
	\begin{equation*}
	\begin{aligned}
	& \iint_{Q_T} \bigg|   u_3^\epsilon - \frac{(A_2 + \epsilon^{\frac{1}{3-\delta}})v^\eps}{A_2 + \epsilon^{\frac{1}{3-\delta}} +A_3}    \bigg|^{\frac{4}{3-\delta}}  \le  \frac{1}{A_3^{\frac{4}{3-\delta}}}\iint_{Q_T} \big | (A_2+\epsilon^{\frac{1}{3-\delta}}) \ue_2   -   A_3  u_3^\epsilon    \big |^{\frac{4}{3-\delta}}\\
	& \le  C  \bigg( \iint_{Q_T}   \frac{\big|  (A_2 +\epsilon^{\frac{1}{3-\delta}}) \ue_2 -  A_3  u_3^\epsilon  \big|^2}{\big|(A_2 +\epsilon^{\frac{1}{4-p_0}}) u_2^\epsilon  +   A_3  u_3^\epsilon\big|^{1-\delta}} \bigg)^{\frac{2}{3-\delta}}   \times \bigg(\iint_{Q_T} \big|(A_2 +\epsilon^{\frac{1}{3-\delta}}) \ue_2  +  A_3 u_3^\epsilon\big|^{2} \bigg)^{\frac{1-\delta}{3-\delta}}\\
	& \le   C \epsilon^{\frac{2}{(3-\delta)^2}} \Big( \|A_2 +1\|^2 _{L^{\infty}(Q_T)} \|u_2^\epsilon  \|_{L^{2}(Q_T)}^2  +   A_3^2 \| u_3^\epsilon \|_{L^{2}(Q_T)}^2  \Big)^{\frac{1-\delta}{3-\delta}}\\
	&\le {C(T,\|u_0\|_{W^{2,2+}(\Omega)\times L^{2+}(\Omega)^2 \times W^{2,2+}(\Omega)})} \, \eps^{\frac{2}{(3-\delta)^2}}
	\end{aligned}
	\end{equation*}
	which yields
	\begin{equation*}
		\left\| u_3^\epsilon - \frac{(A_2 + \epsilon^{\frac{1}{3-\delta}})v^\eps}{A_2 + \epsilon^{\frac{1}{3-\delta}} +A_3}\right\|_{\LQ{\frac{4}{3-\delta}}} \le C(T)\eps^{\frac{1}{2(3-\delta)}}.
	\end{equation*}

	On the other hand, one can  observe 
	\begin{align}\label{e3}
	\bigg|   \frac{ (k_1 u_1^\epsilon + l_2u_4^\epsilon)v^\epsilon }{k_1u_1^\epsilon + l_2u_4^\epsilon +  k_{2}+ l_{1}}   - \frac{(A_2 + \epsilon^{\frac{1}{3-\delta}}) v^\epsilon }{A_2 + \epsilon^{\frac{1}{3-\delta}} +A_3}  \bigg|  \le    \frac{1}{k_{2}+ l_{1}} v^\epsilon\, \epsilon^{\frac{1}{3-\delta}}.  
	\end{align}
	Therefore, by the triangle inequality,  
	\begin{align}
	&\bigg \|  u_3^\epsilon - \frac{ (k_1 u_1^\epsilon + l_2u_4^\epsilon)v^\epsilon }{k_1u_1^\epsilon +l_2u_4^\epsilon   +k_{2}+ l_{1}}    \bigg\|_{L^{\frac{4}{3-\delta}}(Q_T)}\\
	&\le    \bigg \|  u_3^\epsilon - \frac{(A_2 + \epsilon^{\frac{1}{4-p_0}})v^\epsilon }{A_2 + \epsilon^{\frac{1}{3-\delta}} +A_3}    \bigg\|_{L^{\frac{4}{3-\delta}}(Q_T)} +  \frac{\epsilon^{\frac{1}{3-\delta}}}{k_{2}+ l_{1}} \| v^\epsilon \|_{L^{\frac{4}{3-\delta}}(Q_T)} 
	\le C(T)\eps^{\frac{1}{2(3-\delta)}} \label{SlowProofCase2b}
	\end{align}
	which finishes the proof of \eqref{eq13} in the case $N=1,2$.
	
	\medskip
	\underline{For $N\ge 3$},  thanks to \eqref{p0_assump_thm3} and  \eqref{e0}  we have
	\begin{equation}\label{e2}
		\|\ue_2\|_{\LQ{p_0}} + \|\ue_3\|_{\LQ{p_0}} \le  C(T,\|u_{20}\|_{L^{p_0}(\Omega)},\|u_{30}\|_{L^{p_0}(\Omega)}).
	\end{equation}
{By choosing $\delta>0$ small enough such that} 
\begin{align} \label{p0inqN3}
{p_0>   \frac{3\delta(N+2) + 3(N+2)}{N+4} \quad \text{or equivalently} \quad \frac{p_0}{p_0-1-\delta}\le \frac{3(N+2)}{2N+2},}  
\end{align}	
we can estimate using H\"older inequality	
	\begin{align}\label{eq14}
		\left|\iint_{Q_T}(\ue_2)^{1+\delta}(A_2+1)^{\delta} \right| \le& \bra{\iint_{Q_T}(\ue_2)^{p_0}}^{\frac{1+\delta}{p_0}}\bra{\iint_{Q_T}(A_2+1)^{\frac{\delta p_0}{p_0-1-\delta}}}^{\frac{p_0-1-\delta}{p_0}} \le C(T),
	\\
		\label{eq15} \left|\iint_{Q_T}(\ue_2)^{1+\delta}\pa_tA_2\right| \le& \bra{\iint_{Q_T}(\ue_2)^{p_0}}^{\frac{1+\delta}{p_0}}\bra{\iint_{Q_T}|\pa_tA_2|^{\frac{p_0}{p_0-1-\delta}}}^{\frac{p_0-1-\delta}{p_0}} \le C(T).
	\end{align}
	 Similarly,
	\begin{align} \label{eq16}
		\left|\iint_{Q_T}(\ue_2)^{1+\delta}|\na A_2|^2\right| \le\bra{\iint_{Q_T}(\ue_2)^{p_0}}^{\frac{1+\delta}{p_0}}\bra{\iint_{Q_T}|\na A_2|^{\frac{2p_0}{p_0-1-\delta}}}^{\frac{p_0-1-\delta}{p_0}} \le C(T) 
	\end{align}
	by using  \eqref{RegularityU1VIrCase2N3b}, \eqref{p0inqN3}.  {We note       $C(T)=C(T, \|u_0\|_{W^{2,p_0}(\Omega)\times L^{p_0}(\Omega)^2 \times W^{2,p_0}(\Omega)})$ in the above estimates.}
	
	\medskip   
	
Therefore, inserting \eqref{eq14}--\eqref{eq16} into Lemma \ref{TendToManifold} (b) with $p=1+\delta$, $\delta>0$ small enough, leads to 
	\begin{equation*}
		\frac{1}{\eps^{\frac{1}{3-\delta}}}\iint_{Q_T}\frac{\big|(A_2+\eps^{\frac{1}{3-\delta}})\ue_2 - A_3\ue_3\big|^2}{\bra{(A_2+\eps^{\frac{1}{3-\delta}})\ue_2 + A_3\ue_3}^{1-\delta}} \le {C(T, \|u_0\|_{W^{2,q}(\Omega)\times L^{q}(\Omega)^2 \times W^{2,q}(\Omega)})}
	\end{equation*}	
	where $q = \max\{N,p_0,(N+2)/2\}$. By applying the H\"older's inequality,
	\begin{equation*} 
	\begin{aligned}
		 \iint_{Q_T} \bigg|   u_3^\epsilon - \frac{(A_2 + \epsilon^{\frac{1}{3-\delta}})v^\epsilon}{A_2 + \epsilon^{\frac{1}{3-\delta}} +A_3}    \bigg|^{\frac{6(N+2)}{5N+8}}  
		&\le C    
		\iint_{Q_T} \big|    (A_2+\epsilon^{\frac{1}{3-\delta}}) \ue_2   -   A_3  u_3^\epsilon \big|^{\frac{6(N+2)}{5N+8}}\\
		&\le    
		C 
		\bigg( \iint_{Q_T}   \frac{\big|  (A_2 +\epsilon^{\frac{1}{3-\delta}}) \ue_2  -   A_3  u_3^\epsilon  \big|^2}{\big|(A_2 +\epsilon^{\frac{1}{3-\delta}})\ue_2  +   A_3  u_3^\epsilon\big|^{1-\delta}} \bigg)^{\frac{3(N+2)}{5N+8}}\\
		&\qquad \times \bigg(\iint_{Q_T} \big|(A_2 +\epsilon^{\frac{1}{3-\delta}}) \ue_2  +   A_3 u_3^\epsilon\big|^{(1-\delta)\frac{3(N+2)}{2N+2}} \bigg)^{\frac{ 2N+2 }{5N+8}}\\
		&\le C(T)\bra{\eps^{\frac{1}{3-\delta}}}^{\frac{3(N+2)}{5N+8}}
	\end{aligned}
	\end{equation*}
	thanks to \eqref{e2}, \eqref{RegularityU1VIrCase2N3b}, and
	\begin{equation} \label{SlowCaseN3bbb} 
	\begin{aligned}
	&	\left\|(A_2 +\epsilon^{\frac{1}{4-p_0}}) \ue_2  +   A_3  u_3^\epsilon \right\|_{L^{\frac{3(N+2)}{2N+2}}(Q_T)} \\
	& \hspace*{2cm}  \le \|A_2 +1\|_{\LQ{\frac{3(N+2)}{N-2}}}  \|u_2^\epsilon  \|_{\LQ{\frac{3(N+2)}{N+4}}}  +  A_3\|u_3^\epsilon \|_{L^{\frac{3(N+2)}{2N+2}}(Q_T)}.  
	\end{aligned}
\end{equation}	 
	Thus
	\begin{equation*}
		\bigg \|  u_3^\epsilon - \frac{( A_2 + \epsilon^{\frac{1}{4-p_0}}) v^\epsilon }{A_2 + \epsilon^{\frac{1}{3-\delta}} +A_3}    \bigg\|_{L^{\frac{6(N+2)}{5N+8}}(Q_T)} \le C(T)\eps^{\frac{1}{2(3-\delta)}}.
	\end{equation*}
	Using \eqref{e3} again we obtain finally
	\begin{align*}
	&\bigg \|  u_3^\epsilon - \frac{(k_1 u_1^\epsilon + l_2u_4^\epsilon)v^\epsilon  }{k_1u_1^\epsilon +l_2u_4^\epsilon   +k_{2}+ l_{1}}  \bigg\|_{L^{\frac{6(N+2)}{5N+8}}(Q_T)}\\
	&\le    \bigg \|  u_3^\epsilon - \frac{(A_2 + \epsilon^{\frac{1}{4-p_0}})v^\epsilon }{A_2 + \epsilon^{\frac{1}{3-\delta}} +A_3}    \bigg\|_{L^{\frac{6(N+2)}{5N+8}}(Q_T)} +  \frac{\epsilon^{\frac{1}{3-\delta}}}{k_{2}+ l_{1}} \| v^\epsilon \|_{L^{\frac{6(N+2)}{5N+8}}(Q_T)} 
	\le C(T)\eps^{\frac{1}{2(3-\delta)}} 
	\end{align*}
	which proves \eqref{eq13} in the case $N\ge 3$.
\end{proof}

\begin{proof}[Proof of Theorem \ref{thm3}]
	The convergence of  the critical manifold was shown in Lemma \ref{lem5}. From the the equation $\pa_t \ue_1 - \de_1\Delta \ue_1 = -k_1\ue_1\ue_2 + l_1\ue_3$ in which the right hand side is bounded in $\LQ{1}$ we have, up to a subsequence, $\ue_1 \to u_1$ in $\LQ{1}$. Combining this with the boundedness of $\{\ue_1\}_{\eps>0}$ (as Lemma \ref{IrRegularityLemCase2N3}) yields
	\begin{equation*}
		{ \ue_1 \to u_1 \quad \text{ in } \quad \left\{ \begin{array}{llll}
 L^p(Q_T), 1\le p<\infty & \text{if } N=1,2, \vspace*{0.15cm}\\
\displaystyle L^{\frac{3(N+2)}{N-2}}(Q_T) & \text{if } N\ge 3.	
\end{array}		 \right. }  
	\end{equation*}
	Similarly,
	\begin{equation}\label{eq17}
		{ \ue_4 \to u_4 \quad \text{ in } \quad \left\{ \begin{array}{llll}
 L^p(Q_T), 1\le p<\infty & \text{if } N=1,2, \vspace*{0.15cm}\\
\displaystyle L^{\frac{3(N+2)}{N-2}}(Q_T) & \text{if } N\ge 3.	
\end{array}		 \right.}  
	\end{equation} 
	 From \eqref{L2plus}, it holds
	\begin{equation}\label{eq18}
		\ue_j \rightharpoonup u_j \quad\text{weakly in }\quad \LQ{2}, \,j=2,3.
	\end{equation}
	By using \eqref{L2plus}, \eqref{eq13}, and the fact that $\dfrac{k_1k_2u_1^\epsilon-l_1l_2u_4^\epsilon}{k_1u_1^\epsilon + l_2u_4^\epsilon +k_{2} + l_{1}}$ is bounded uniformly in $\LQ{\infty}$, we obtain
	\begin{equation}\label{eq19}
	\begin{aligned}
	- k_1u_1^\epsilon \ue_2 + l_1u_3^\epsilon  =   (k_1u_1^\epsilon +l_1) \bigg(  u_3^\epsilon - \frac{(k_1 u_1^\epsilon + l_2u_4^\epsilon)v^\epsilon }{k_1 u_1^\epsilon + l_2u_4^\epsilon+k_{2}+ l_{1}} \bigg)      -   \dfrac{k_1k_2u_1^\epsilon-l_1l_2u_4^\epsilon}{k_1u_1^\epsilon + l_2u_4^\epsilon +k_{2} + l_{1}}v^\epsilon\\
	\longrightharpoonup -\frac{(k_1k_2 u_1 - l_1l_2u_4)v}{k_1u_1+l_2u_4 + k_2+l_1} \; \text{ weakly in } \LQ{2}.
	\end{aligned}	
	\end{equation}
	From \eqref{eq17}, \eqref{eq18} and \eqref{eq19}, it is enough to pass to the limit in (very) weak formulation of the system involving $(\ue_1,v^\eps,\ue_4)$ to conclude that $(u_1,v,u_4)$ is a very weak solution to the reduced system \eqref{ReducedSys} according to Definition \ref{very_weak_sol} (b). This finishes the proof of Theorem \ref{thm3}.
\end{proof}

\subsection{Proof of Theorem \ref{thm4}}
 \begin{proof}[Proof of Theorem \ref{thm4}]
	Thanks to \eqref{L2plus}, we can use the equation of $\ue_1$ and $\ue_4$ to obtain that
	\begin{equation*}
		\ue_j \to u_j \; \text{ in } \; \LQ{2}, \quad \na \ue_j \rightharpoonup \na u_j \; \text{weakly in}\; \LQ{2}, \quad j=1,4. 
	\end{equation*} 	
	We have
	\begin{align*}
		\left|\iint_{Q_T}\sbra{(k_1\ue_1 + l_2\ue_4)\ue_2 - (k_2 + l_1)\ue_3}\psi \right| &= \eps\left|\iint_{Q_T}(\pa_t \ue_2 - \de_2\Delta \ue_2)\psi \right|\\
		&= \eps\left|\iint_{Q_T}(\pa_t \psi + \de_2\Delta \psi)\ue_2 \right|\\
		&\le \eps\|\pa_t \psi + \de_2\Delta\psi\|_{\LQ{2}}\|\ue_2\|_{\LQ{2}}\\
		&= O(\eps)
	\end{align*}
	for all $\psi \in  C_c^{\infty}(Q_T)$. Note that $(k_1\ue_1 + l_2\ue_4)\ue_2 - (k_2 + l_1)\ue_3$ is bounded in $\LQ{1+}$ uniformly w.r.t. $\eps>0$, we obtain 
	\begin{equation}\label{f2}
		(k_1\ue_1 + l_2\ue_4)\ue_2 - (k_2 + l_1)\ue_3 \rightarrow 0 \quad \text{ in  distributional sense}.
	\end{equation}
We have  $(k_1\ue_1+l_2\ue_4+k_2+l_1)^{-1} \to (k_1u_1+l_2u_4+k_2+l_1)^{-1}$ in $\LQ{q}$ for all $1\le q<+\infty$ since $(k_1\ue_1+l_2\ue_4+k_2+l_1)^{-1}$ is uniformly bounded  in $\LQ{\infty}$. Therefore
	\begin{equation}\label{f3}
	\begin{aligned}
		\ue_3 - \frac{(k_1\ue_1+l_2\ue_4)v^\eps}{k_1\ue_1+l_2\ue_4+k_2+l_1} &= \frac{1}{k_1\ue_1+l_2\ue_4+k_2+l_1}\sbra{(k_2 + l_1)\ue_3 - (k_1\ue_1 + l_2\ue_4)\ue_2} \to 0   
	\end{aligned}
	\end{equation}
in distributional sense,	which is the convergence of critical manifold in \eqref{f1}.
	{By rewriting
	\begin{equation*}
		- k_1u_1^\epsilon \ue_2 + l_1u_3^\epsilon = -\frac{k_1\ue_1+l_1}{k_1\ue_1+l_2\ue_4 + k_2+l_1}\sbra{(k_1\ue_1+l_2\ue_4)\ue_2 - (k_2+l_1)\ue_3} - \dfrac{(k_1k_2u_1^\epsilon-l_1l_2u_4^\epsilon)v^\epsilon}{k_1u_1^\epsilon + l_2u_4^\epsilon +k_{2} + l_{1}}
	\end{equation*}
	and using \eqref{f2} we get
	\begin{equation*}
		- k_1u_1^\epsilon \ue_2 + l_1u_3^\epsilon 
		\to  -\frac{(k_1k_2 u_1 - l_1l_2u_4)v}{k_1u_1+l_2u_4 + k_2+l_1} \quad \text{in distributional sense},
	\end{equation*}
	which is enough to pass to the limit in the equation of $\ue_1$. The same holds for the equation of $\ue_4$. From \eqref{f3} we also obtain
	\begin{equation*}
		\ue_3 \longrightharpoonup \frac{(k_1u_1 + l_2u_4)v}{k_1u_1 + l_2u_3 + k_2 + l_1} \quad \text{weakly in} \quad \LQ{1+},
	\end{equation*}
	which allows to pass to the limit $\eps \to 0$ in the equation of $v^\eps$ to ultimately conclude that $(u_1,v,u_2)$ is a very weak solution to \eqref{ReducedSys}. The proof of Theorem \ref{thm4} is finished}.
 \end{proof}

\medskip

\noindent{\textbf{Acknowledgement.}} This research is funded by the FWF project ``Quasi-steady-state approximation for PDE", number I-5213.


\newcommand{\etalchar}[1]{$^{#1}$}

\end{document}